\numberwithin{equation}{section}
\numberwithin{figure}{section}
\newtheorem{theorem}{Theorem}[section]
\newtheorem{theoremletter}{Theorem}
\newtheorem{proposition}[theorem]{Proposition}
\newtheorem{lemma}[theorem]{Lemma}
\newtheorem{observation}[theorem]{Observation}
\newtheorem{conjecture}[theorem]{Conjecture}
\theoremstyle{definition}
\newtheorem{definition}[theorem]{Definition}
\newtheorem{remark}[theorem]{Remark}
\newtheorem{algorithm}[theorem]{Algorithm}
\newcommand{\CC}{\mathbb{C} }
\newcommand{\ZZ}{\mathbb{Z} }
\newcommand{\RR}{\mathbb{R} }
\newcommand{\NN}{\mathbb{N} }
\newcommand{\cA}{\mathcal{A} }
\newcommand{\cO}{\mathcal{O} }
\newcommand{\cS}{\mathcal{S} }
\newcommand{\cT}{\mathcal{T} }
\newcommand{\be}{\mathbf{e} }
\newcommand{\bm}{\mathbf{m} }
\newcommand{\bv}{\mathbf{v} }
\newcommand{\ri}{\mathrm{i} }
\newcommand{\rf}{\mathrm{f} }
\def\SL{\mathrm{SL}}
\def\spec{\mathrm{Spec}\, }
\def\RMC{\mathsf{RMC}}
\def\RML{\mathsf{RML}}
\def\NMC{\mathsf{NMC}}
\def\NML{\mathsf{NML}}
\def\type{\mathrm{type}}
\def\Star{\mathrm{Star}}
\def\Teich{\mathscr{T}^d(\Sigma)}
\def\Curve{\mathscr{C}(\Sigma)}
\def\Arc{\mathscr{S}_{h}(\Sigma)}
\def\Muller{\mathrm{Sk}_{q}(\hat{\Sigma})}
\def\Mullerone{\mathrm{Sk}_{1}(\hat\Sigma)}
\begin{document}

\title[Roger-Yang skein algebra]{The Roger-Yang skein algebra and the decorated Teichm\"uller space}
\date{\today}

\author{Han-Bom Moon}
\address{Department of Mathematics, Fordham University, New York, NY 10023}
\email{hmoon8@fordham.edu}

\author{Helen Wong}
\address{Department of Mathematical Sciences, Claremont McKenna College, Claremont, CA 91711}
\email{hwong@cmc.edu}

\begin{abstract}
Based on hyperbolic geometric considerations, Roger and Yang introduced an extension of the Kauffman bracket skein algebra that includes arcs.  In particular, their skein algebra is a deformation quantization of a certain commutative curve algebra, and there is a Poisson algebra homomorphism between the curve algebra and the algebra of smooth functions on decorated Teichm\"uller space.  In this paper, we consider surfaces with punctures which is not the 3-holed sphere and which have an  ideal triangulation without self-folded edges or triangles.  For those surfaces, we prove that Roger and Yang's Poisson algebra homomorphism is injective, and the skein algebra they defined have no zero divisors.   A section about generalized corner coordinates for normal arcs may be of independent interest.  
\end{abstract}

\maketitle

\section{Introduction}
 Let $\Sigma$ be a closed surface with finitely many punctures.  Defined by Penner \cite{Penner87}, the decorated Teichm\"uller space $\Teich$ consists of complete, finite area hyperbolic metrics on $\Sigma$ that come with a choice of horoball around each puncture.    This paper describes progress in a program initiated by Roger and Yang in \cite{RogerYang14} to establish a certain skein algebra $\Arc$ as a quantization of $\Teich$.  
  
One of Roger and Yang's objective was to extend the rich body of work showing that the Kauffman bracket skein algebra is a quantization of the usual Teichm\"uller space, via the $\SL_{2}$-character variety \cite{HostePrz90, Turaev91, Bullock97,  BullockFrohmanJKB99,  PrzytyckiSikora00, BullockFrohmanJKB02, CharlesMarche12}. In particular, they introduced an extension $\Arc$ of the Kauffman bracket skein algebra to the case of punctured surfaces that uses arcs.  Observe that, in contrast to the usual Teichm\"uller space,  in  $\Teich$ one can assign a length to arcs that go from puncture to puncture, by truncating at the horoballs.  This way of assigning lengths leads to the so-called lambda-length functions which parametrize $\Teich$ (\cite{Penner87}). Roger and Yang's skein algebra $\Arc$ is generated by both framed arcs and loops, and  an indeterminate variable for each of the punctures that accounts for the size of the horoballs decorations.  Besides the two usual Kauffman bracket skein relations, $\Arc$ has two more which, by  design, match the relations between lambda-length functions of arcs. For the definition of $\Arc$, see Section~\ref{sec:arcalgebra}.

Roger and Yang also define a commutative curve algebra $\Curve$ generated by loops and arcs in $\Sigma$ (see Section~\ref{sec:curvealgebra} for the relations), and they show that it has a Poisson bracket that generalizes the Goldman bracket formula for the Kauffman bracket skein algebra.  Furthermore, there is a Poisson algebra homomorphism 
\begin{equation}\label{eqn:phi}
	\Phi : \Curve \to C^{\infty}(\Teich),
\end{equation}
 where $ C^{\infty}(\Teich)$ is the algebra of $\CC$-valued smooth functions on $\Teich$.   The Poisson structure on $ C^{\infty}(\Teich)$ is the Weil-Petersson symplectic structure \cite{Penner92} whose action on lambda-length functions was explicitly computed by \cite{Mondello09}.    
 
 Roger and Yang show that the skein algebra, $\Arc$ is a deformation quantization of $\Curve$ (\cite[Theorem 1.1]{RogerYang14}).  It then follows that $\Arc$ seems a likely candidate for quantization of $\Teich$. However, there remain several technical hurdles to the program Roger and Yang sketched out.

\subsection{Main results}

The main purpose of this paper is to present progress toward the biggest obstacle, which Roger and Yang conjectured, as below.  

\begin{conjecture}[\protect{\cite[Conjecture 3.17]{RogerYang14}}]\label{conj:injectivity}
The Poisson algebra homomorphism $\Phi$ in \eqref{eqn:phi} is injective. 
\end{conjecture}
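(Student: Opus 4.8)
The plan is to fix an ideal triangulation $\Delta$ of $\Sigma$ with no self-folded edges or triangles, use Penner's $\lambda$-length coordinates to realize $\Phi$ as a map into a Laurent polynomial ring, and then deduce injectivity from the linear independence of the $\Phi$-images of a natural basis of $\Curve$ by a leading-term argument.

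\emph{Reduction to linear independence.} By Penner's theorem, the $\lambda$-lengths $\lambda_e$ of the edges $e\in\Delta$ form a global real-analytic coordinate system on $\Teich$, identifying it with $\RR_{>0}^{\Delta}$. In these coordinates $\Phi$ takes values in a Laurent polynomial ring $R$ in the $\lambda_e$ (or, if needed for the trace functions, in their square roots), which is an integral domain: $\Phi(\alpha)=\lambda_{\alpha}$ is a Laurent polynomial in the $\lambda_e$ by the Laurent phenomenon applied to iterated Ptolemy relations; for a simple closed curve $\gamma$, $\Phi(\gamma)$ is the trace of a product of edge and triangle matrices along $\gamma$, hence again Laurent (the shear coordinates are Laurent monomials in the $\lambda_e$); and $\Phi(v_p)$ is the explicit horocycle-type function at the puncture $p$. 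Since $\Phi$ is $\CC$-linear, it suffices to produce a $\CC$-basis $\{b\}$ of $\Curve$ whose images are $\CC$-linearly independent in $R$. For $\{b\}$ I would take the reduced multicurves --- disjoint unions of essential simple closed curves and essential arcs, with contractible components discarded and peripheral or doubled components absorbed into powers of the $v_p$ using the relations of $\Curve$ --- each multiplied by the remaining monomial in the $v_p$; that these form a basis is recorded in the structure theory of $\Curve$ recalled earlier.

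\emph{Leading terms from corner coordinates.} For each $b$ I would read off, from the $\Delta$-normal form of its underlying multicurve via the generalized corner coordinates developed in the earlier section, an exponent vector $\mathbf{c}(b)\in\ZZ^{\Delta}$ built from the intersection numbers and corner numbers with $\Delta$ together with the $v_p$-exponents, and show that the monomial $\lambda^{\mathbf{c}(b)}=\prod_e\lambda_e^{c_e(b)}$ is, with respect to a suitably chosen term order on $\ZZ^{\Delta}$, the unique leading term of $\Phi(b)$. The two claims to establish are: (i) every $\Phi(b)$ has this single dominant monomial; and (ii) the map $b\mapsto\mathbf{c}(b)$ is injective on the basis, because a normal multicurve in an ideal triangulation is reconstructed from its intersection numbers with the edges (there is no room to twist inside triangles) and its peripheral part is recorded by the $v_p$-exponents. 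Granting (i) and (ii), the argument concludes in the standard way: in a finite relation $\sum_b a_b\Phi(b)=0$, the basis element whose leading exponent $\mathbf{c}(b)$ is largest in the term order contributes a monomial that no other $\Phi(b')$ can cancel, forcing all $a_b=0$.

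\emph{Main obstacle.} I expect (i) and (ii) to be the heart of the matter. One must choose the term order --- equivalently, a one-parameter degeneration of decorated hyperbolic structures along $\Delta$ --- so that \emph{every} $\Phi(b)$, including the trace functions of loops with large geometric intersection number with $\Delta$, acquires a single dominant monomial, and one must check that the corner-coordinate exponents genuinely separate distinct normal forms, correctly tracking the corner arcs around the vertices of triangles and the peripheral loops around punctures. This is exactly where the standing hypotheses enter: excluding self-folded edges and triangles keeps the Ptolemy expansions and the corner-coordinate bookkeeping uniform (no monogon or bigon complications), while excluding the three-holed sphere rules out the degenerate case in which $\Teich$ carries too few $\lambda$-length coordinates to separate all curves.
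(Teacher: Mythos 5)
Your proposal has a genuine gap: the two statements you yourself flag as ``the heart of the matter'' --- (i) that for a suitable term order every $\Phi(b)$ has a single dominant monomial $\lambda^{\mathbf{c}(b)}$, and (ii) that $b\mapsto\mathbf{c}(b)$ is injective on the basis --- are precisely the hard content, and nothing in the proposal establishes them or even specifies the order. This is not a routine verification: the paper reports explicitly (Remark~\ref{rem:algviewpoint}) that a total-order/leading-term scheme of exactly this type could not be made to work, because arc classes produce distinct reduced multicurves of the same degree whose extremal terms collide (e.g.\ $P_e(\alpha_i)=N_e(\alpha_j)$, Figures~\ref{fig:cancellationRMC1} and~\ref{fig:Pai=Naj}), and every natural candidate order exhibited such collisions; the authors had to retreat to a partial order with ad hoc tie-breaking. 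Transplanting the scheme into the image ring does not dissolve the difficulty, it just relocates it. Moreover, the justification you offer for (ii) is false once arcs are present: a normal multicurve with arc components is \emph{not} reconstructed from its intersection numbers with the edges --- every edge of the triangulation, regarded as an arc class, meets all edges transversally zero times --- which is exactly why Section~\ref{sec:generalizedcornercoord} introduces half-integral generalized corner coordinates instead of edge coordinates. So (ii) needs the corner-coordinate bookkeeping and a proof that the chosen leading exponents actually separate these data, and (i) needs an argument that loops, arcs, and the factors coming from the punctures all acquire a unique top monomial simultaneously; neither is supplied.

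Two further points. First, $\Curve$ is a $\CC[v_i^{\pm}]$-algebra, and $\Phi(v_p)=r(v_p)$ is a \emph{sum} of Laurent monomials in the $\lambda_e$, not a unit; hence basis elements carrying negative $v_p$-powers are not sent into your Laurent ring, and you must pass to the fraction field or clear denominators, after which the leading terms of $r(v_p)^{\pm m}$ enter the injectivity claim (ii) and can interfere with the curve part --- another source of potential collisions your sketch does not address. Second, for comparison, the paper avoids the need for a global order altogether: it localizes $\Curve$ at the multiplicative set generated by the edges, proves $S^{-1}\Curve\cong\CC[\lambda_i^{\pm}]$ (Lemma~\ref{lem:localmapisom}), and thereby reduces the conjecture to the statement that no edge is a zero divisor (Theorem~\ref{thm:nonzerodivisorimpliesinjectivity}); the leading-term analysis is then only needed for multiplication by a single edge $e$, where all changes are confined to $\Star(e)$ and can be controlled corner by corner. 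To make your route work you would have to supply the term order and prove (i)--(ii) with arcs and $v_p^{\pm}$-factors included; otherwise the efficient repair is to adopt the localization reduction and prove the non-zero-divisor statement, which is what the paper does.
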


In this paper, we provide an overall strategy for proving the conjecture,  and carry it out in full for \emph{locally planar} surfaces, which are surfaces that have an ideal triangulation without self-folded edges or triangles (and is not the three-puncture sphere). Note that for any surface, if we drill enough extra points, then it becomes locally planar. The following two theorems are the main results of this paper. 

\begin{theoremletter}[\protect{Theorem \ref{thm:nonzerodivisorimpliesinjectivity}}]\label{thm:nonzerodivisorimpliesinjectivityintro} 
 If  $\Sigma$ has an ideal triangulation such that no edge of the triangulation is a zero divisor in $\Curve$, then $\Phi$ in \eqref{eqn:phi} is injective.
\end{theoremletter}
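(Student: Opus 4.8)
The plan is to reduce the injectivity of $\Phi$ to Penner's theorem that the lambda-lengths of the edges of an ideal triangulation are global coordinates on $\Teich$. Fix an ideal triangulation of $\Sigma$ and let $e_1,\dots,e_r$ be its edges, regarded simultaneously as elements of $\Curve$; write $\lambda_{e_i}:=\Phi(e_i)$ for the associated lambda-length functions. Each $\lambda_{e_i}$ is everywhere positive on $\Teich$, hence a unit in $C^{\infty}(\Teich)$, so $\Phi$ extends uniquely to a Poisson algebra homomorphism $\widehat{\Phi}\colon\Curve[e_1^{-1},\dots,e_r^{-1}]\to C^{\infty}(\Teich)$. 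By hypothesis every $e_i$ is a non-zero-divisor in $\Curve$, so the localization map $\iota\colon\Curve\hookrightarrow\Curve[e_1^{-1},\dots,e_r^{-1}]$ is injective, and since $\Phi=\widehat{\Phi}\circ\iota$ it suffices to prove that $\widehat{\Phi}$ is injective.

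The crux is the claim that $\Curve[e_1^{-1},\dots,e_r^{-1}]$ is generated over $\CC$ by $e_1^{\pm 1},\dots,e_r^{\pm 1}$ together with the puncture variables, and that the defining relations of $\Curve$ identify each puncture variable with an explicit Laurent-rational expression in $e_1,\dots,e_r$ — the reciprocal of the Laurent polynomial that computes the length of the corresponding horocycle in Penner's $h$-length formula. Granting this, $\Curve[e_1^{-1},\dots,e_r^{-1}]$ is a quotient of a localization $\CC[e_1^{\pm 1},\dots,e_r^{\pm 1}][S^{-1}]$ of the Laurent polynomial ring in $r$ variables, localized at the horocycle-length polynomials, whose images in $C^{\infty}(\Teich)$ are nowhere zero; moreover the composite $\CC[e_1^{\pm 1},\dots,e_r^{\pm 1}][S^{-1}]\twoheadrightarrow\Curve[e_1^{-1},\dots,e_r^{-1}]\xrightarrow{\ \widehat{\Phi}\ }C^{\infty}(\Teich)$ sends $e_i$ to $\lambda_{e_i}$. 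Since $(\lambda_{e_1},\dots,\lambda_{e_r})\colon\Teich\to\RR_{>0}^{r}$ is a diffeomorphism by Penner's theorem, the functions $\lambda_{e_1},\dots,\lambda_{e_r}$ are algebraically independent, so this composite is injective; hence the surjection above is an isomorphism and $\widehat{\Phi}$ is injective, which finishes the proof.

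What remains is to prove the claim, which is the main work; hyperbolic geometry plays no part in it, only the relations of $\Curve$ and the topology of curves relative to the triangulation. As $\Curve$ is generated by loops, arcs, and the puncture variables $v_p^{\pm 1}$, it is enough to show that every arc and every loop lies in the subalgebra $A$ generated by $e_1^{\pm 1},\dots,e_r^{\pm 1}$ and the $v_p^{\pm 1}$, and that each $v_p$ is a Laurent-rational expression in the edges. For an arc $\alpha$ one argues by induction on the complexity of $\alpha$ relative to the triangulation, recorded by the \emph{generalized corner coordinates for normal arcs} of the section advertised in the abstract (roughly, the geometric intersection numbers of $\alpha$ with the edges): if $\alpha$ is disjoint from the $1$-skeleton it is isotopic to some $e_i$, or else it cuts off a once-punctured monogon and is dealt with by the puncture relations; otherwise a complexity-decreasing flip is available, and the Ptolemy-type skein relation rewrites the flipped edge as $e_i^{-1}$ times an integral combination of products of edges, so the inductive hypothesis applies. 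For a loop $\gamma$: if $\gamma$ is disjoint from the $1$-skeleton it lies inside a single triangle, hence bounds an embedded disk in $\Sigma$ and equals $-2$; otherwise it meets some edge $e_i$, and resolving the intersection points of $\gamma$ with $e_i$ by the Kauffman skein relation expresses $e_i\gamma$ as an integral combination of products of arcs, reducing to the case already treated. Finally, resolving around a puncture $p$ (a fan relation among the triangles incident to $p$) expresses $v_p$ through the edges incident to $p$ and that fan, reproducing Penner's horocycle-length formula.

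The main obstacle is the claim, and within it two points demand care. First, one must check that a complexity-decreasing flip, or skein resolution, is always available and that at each step of the induction the coefficients that appear stay inside $\CC[e_1^{\pm 1},\dots,e_r^{\pm 1}]$; this bookkeeping is exactly what the generalized corner coordinates are designed to make manageable. Second, one must handle the degenerate configurations at the punctures and at self-folded edges and triangles, where the ordinary Ptolemy relation degenerates and must be replaced by its puncture-skein (fan) analogue, and where the horocycle-length identity has to be verified inside $\Curve$ itself and not merely after applying $\Phi$. By comparison, the localization formalism, the extension of $\Phi$, and the appeal to Penner's coordinates are routine.
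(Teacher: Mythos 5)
Your proposal is correct in outline and follows essentially the same route as the paper: localize $\Curve$ at the multiplicative set generated by the edges (the non-zero-divisor hypothesis makes the localization map $L:\Curve\to S^{-1}\Curve$ injective), identify the localization with a Laurent polynomial algebra, and conclude from Penner's theorem that the $\lambda_{i}$ are algebraically independent, so the localized map is injective. The differences lie in how your ``claim'' is handled. The paper does not reprove the Laurent expansion of curves: it quotes Lemma 3.18 of \cite{RogerYang14}, which says that $\be^{\bm}\alpha$ is a polynomial in the edges when $\bm$ records the intersection numbers of $\alpha$ with the fixed triangulation (an induction on intersections with that triangulation, not on flips), and it disposes of the puncture variables in one line, by applying the puncture-skein relation to $v\,e_{1}e_{2}$ for two edges ending at $v$, so that $S^{-1}\Curve$ is generated by the $e_{i}^{\pm}$ alone and $\Psi:S^{-1}\Curve\to\CC[\lambda_{i}^{\pm}]$ is an isomorphism (Lemmas \ref{lem:Laurantpolynomialring} and \ref{lem:localmapisom}); no auxiliary localization at horocycle-length polynomials is needed. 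Two points in your sketch need repair, though neither harms the architecture: (i) the puncture class $v_{p}$ is identified with the horocycle-length Laurent polynomial itself, not its reciprocal --- indeed $\Phi(v_{p})=r(v_{p})$ (Theorem \ref{thm:Phi}) and $v_{p}$ is a positive Laurent polynomial in the edges (Remark \ref{rem:clusteralgebra}); (ii) the flip-based induction for arcs is more delicate than you state, since a flip divides by the edge being flipped, which after the first flip is generally not one of the original $e_{i}$ (the cluster-algebra Laurent-phenomenon subtlety), whereas the resolution-based induction against the fixed triangulation avoids this entirely and is what \cite{RogerYang14} does.
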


\begin{theoremletter}[\protect{Theorem \ref{thm:locallyplanarzerodivisor}}]\label{thm:mainthmintro} 
If $\Sigma$ is locally planar, then no edge of a locally planar triangulation is a zero divisor in $\Curve$. In particular, $\Phi$ in \eqref{eqn:phi} is injective. 
\end{theoremletter}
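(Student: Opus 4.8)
The plan is to obtain Theorem~\ref{thm:mainthmintro} from Theorem~\ref{thm:nonzerodivisorimpliesinjectivityintro}. A locally planar surface carries, by definition, an ideal triangulation $\Delta$ with no self-folded edges or triangles, so it suffices to prove that every edge $e$ of $\Delta$ is not a zero divisor in $\Curve$; the injectivity of $\Phi$ is then immediate from Theorem~\ref{thm:nonzerodivisorimpliesinjectivityintro}. I would in fact aim for the stronger statement that $\Curve$ has no zero divisors whatsoever, by building a multiplicative filtration of $\Curve$ relative to $\Delta$ whose associated graded ring embeds into a Laurent polynomial ring --- hence is an integral domain --- and transferring this back to $\Curve$.

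The filtration I have in mind measures the complexity of a multicurve relative to $\Delta$. Recall that $\Curve$ has a $\CC$-basis indexed by reduced multicurves (disjoint unions of essential loops and essential arcs), the product of two such being computed by resolving intersections through the defining relations. After cutting $\Sigma$ along $\Delta$ into ideal triangles, a reduced multicurve $\gamma$ in normal position meets each triangle in a disjoint union of normal arcs, each of which either cuts off one of the three corners or joins a side to an ideal vertex; its \emph{generalized corner coordinates} record the number of arcs of each type together with the multiplicities at the punctures. The section on generalized corner coordinates should establish that, for a locally planar triangulation, this assignment embeds the set of reduced multicurves into a lattice $\ZZ^N$ and that geometric superposition of multicurves is governed, in leading order, by addition of coordinate vectors. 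Fixing a weighting of the coordinates chosen to break all ties, one obtains a filtration $F_{\bullet}\Curve$ for which $\mathrm{gr}\,\Curve$ is generated by the leading classes of reduced multicurves and each defining relation --- the Kauffman-type relations, the Roger--Yang puncture relations, and the crossing/Ptolemy resolutions obtained from them --- acquires a \emph{monomial} leading term. Consequently $\mathrm{gr}\,\Curve$ is the affine semigroup algebra $\CC[S]$ of the sub-semigroup $S\subseteq\ZZ^N$ generated by the coordinate vectors of the curve generators, and $\CC[S]\hookrightarrow\CC[\ZZ^N]$ exhibits it as a domain.

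The conclusion is then formal: if $\mathrm{gr}\,\Curve$ is a domain, so is $\Curve$, since a product of two nonzero elements has nonzero leading term; thus no element of $\Curve$, and in particular no edge of $\Delta$, is a zero divisor, and Theorem~\ref{thm:nonzerodivisorimpliesinjectivityintro} yields the injectivity of $\Phi$. If one prefers, it suffices to observe only that the leading class $\overline{e}$ of an edge $e$ is a single monomial of $\mathrm{gr}\,\Curve$, hence not a zero divisor there, which already gives the stated theorem without the full domain statement.

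The step I expect to be the main obstacle is showing that, for a locally planar triangulation, the generalized corner coordinates can be weighted so that \emph{all} relations acquire monomial leading terms --- equivalently, that the relevant skein resolutions degenerate to binomial relations. The critical case is the Ptolemy relation $ef = ac + bd$ coming from flipping an edge $e$ inside the quadrilateral formed by its two adjacent triangles: one has to arrange that the products $ac$ and $bd$ of opposite sides receive distinct coordinate vectors, so that exactly one of them survives in the leading term of $ef$. This is precisely what can fail when $\Delta$ has a self-folded triangle or a self-folded edge, since then the folded edge occurs twice on the boundary of a triangle, two ``sides'' of the quadrilateral coincide, and one expects the folded edge to become a genuine zero divisor. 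A secondary technical point, absorbed into the corner-coordinate bookkeeping, is the treatment of arcs incident to the punctures and of the puncture variables $v_p$, which must be built into the coordinates so that the Roger--Yang puncture relations are monomial in leading order as well.
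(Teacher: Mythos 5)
Your reduction to Theorem~\ref{thm:nonzerodivisorimpliesinjectivityintro} is the right frame, but the heart of your argument --- that the generalized corner coordinates can be weighted so that every product of reduced multicurves, and in particular every skein/puncture resolution, has a single \emph{monomial} leading term, making $\mathrm{gr}\,\Curve$ an affine semigroup algebra --- is exactly the step that is missing, and it is the step the paper says it could not carry out. This is the ``algebraic scheme'' of Remark~\ref{rem:algviewpoint}: the authors report that no natural total order on $\RMC$ compatible with a resolution map could be found, precisely because of the arc classes. Concretely, when you multiply an edge $e$ by $\alpha\in\RMC^{j}$, the two turnback-free resolutions $P\alpha$ and $N\alpha$ are \emph{both} leading terms of the same edge degree (Lemma~\ref{lem:PandNareleadingterms} and Lemma~\ref{lem:Pisorderpreserving}), so any filtration built from a degree-like weighting does not single out one monomial; worse, there are genuinely distinct $\alpha_{i}\neq\alpha_{j}$ with the same edge degree and even the same local data $\pi(\alpha_i)=\pi(\alpha_j)$ for which $P\alpha_{i}=N\alpha_{j}$ (Figures~\ref{fig:cancellationRMC1} and \ref{fig:Pai=Naj}), so cancellations among would-be leading terms really occur and cannot be excluded by a tie-breaking weight chosen once and for all in the coordinate lattice. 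Your proposal also asserts that superposition of multicurves is ``governed, in leading order, by addition of coordinate vectors''; the corner-coordinate section of the paper establishes no such multiplicativity, and the explicit resolution algorithms show it fails in the form you need: the coordinates of $P\alpha$ and $N\alpha$ differ from those of $e\cup\alpha$ by an unwinding around the vertices of $e$ whose location depends on where the first $0$ or $-\tfrac12$ corner occurs around the vertex, i.e.\ on data far from $e$ (compare Figure~\ref{fig:localdegreechange}, where $\pi(\alpha)=\pi(\alpha')$ but $\pi(P\alpha)\neq\pi(P\alpha')$). So the claim that the relations become binomial after a suitable weighting is not a routine verification to be ``absorbed into bookkeeping''; it is the open core of the problem, and your identification of the Ptolemy relation as the critical case misses where the actual difficulty lies.

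By contrast, the paper's proof deliberately avoids a global filtration. It fixes the edge $e$, splits $\Curve$ by the number of endpoints shared with $e$ (Lemma~\ref{lem:split}), derives exact formulas for $P$ and $N$ on corner coordinates in the locally planar case, proves $P$ and $N$ are injective on each $\RMC^{j}$ (Proposition~\ref{prop:Pisinjective}), and then, using only the partial order by $\deg_{e}$ together with ad hoc tie-breaking (lexicographic order on $\pi(\alpha)=(\alpha(a_{s}),\alpha(b_{t}))$, the subdivision of $\RMC^{0}$ by triangle type, and maximizing $P\alpha(b_{t-1})$ via Lemma~\ref{lem:extremaldegchange}), exhibits one surviving resolution in $e\beta$; the reduction from $\CC[v_{i}^{\pm}]$-coefficients to $\CC$-coefficients is a separate argument (Proposition~\ref{prop:deltanozerodivisor}). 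Note also that the domain property of $\Curve$ is obtained in the paper only \emph{after} injectivity of $\Phi$, by embedding $\Curve$ into $\CC[\lambda_{i}^{\pm}]$ (Theorem~\ref{thm:nozerodivisors}), not from an associated graded ring. If you can actually construct the valuation you posit, you would obtain a stronger and cleaner result than the paper's; as written, however, its existence is asserted rather than proved, so the proposal has a genuine gap.
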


An interesting algebraic consequence of the injectivity of $\Phi$ is the following, which is proved in Section~\ref{sec:consequences}.

\begin{theoremletter} [\protect{Theorem~\ref{thm:nozerodivisors}}]\label{thm:nozerodivisorsintro} 
If Conjecture~\ref{conj:injectivity} is true, then $\Curve$ and its quantization $\Arc$ are domains. In particular, if $\Sigma$ is locally planar, $\Curve$ and $\Arc$ are domains. 
\end{theoremletter}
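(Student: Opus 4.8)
The plan is to deduce the statement from two facts that do not themselves mention $\Teich$ very directly: (i) if $\Phi$ is injective then $\Curve$ is a domain, and (ii) if $\Curve$ is a domain then so is its quantization $\Arc$. Granting these, the ``in particular'' clause is immediate, since by Theorem~\ref{thm:mainthmintro} local planarity of $\Sigma$ makes $\Phi$ injective, whereupon (i) and (ii) apply in turn.

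For (i) one must be a little careful, because $C^{\infty}(\Teich)$ is \emph{not} a domain (disjointly supported bump functions), so injectivity of $\Phi$ alone does not obviously embed $\Curve$ into a domain. The remedy is to observe that $\Teich$ is \emph{connected} --- by Penner \cite{Penner87} it is homeomorphic to a Euclidean cell, with global coordinates given by the $\lambda$-lengths of the edges of a fixed ideal triangulation --- and that the image of $\Phi$ consists of real-analytic functions on $\Teich$. Indeed, it suffices to check this on generators of $\Curve$: a loop maps to a (signed) trace function of the holonomy, which is real-analytic; an arc maps to a $\lambda$-length function, which is real-analytic (in fact a Laurent polynomial in the edge $\lambda$-lengths, by repeated use of the Ptolemy relation); and the puncture variables map to real-analytic functions of the horoball decorations. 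Hence $\Phi(\Curve)$ lies in the ring $C^{\omega}(\Teich)$ of real-analytic functions on the connected analytic manifold $\Teich$, and that ring is a domain: a nonzero real-analytic function is nonzero on some nonempty open set, on which no other nonzero real-analytic function can vanish identically, so a product of two nonzero real-analytic functions is nonzero. Thus if $\Phi$ is injective, $\Curve$ embeds into a domain and is itself a domain.

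For (ii) I would use the deformation-quantization structure of \cite[Theorem~1.1]{RogerYang14}. Let $R$ denote the ground ring of $\Arc$ and let $\mathfrak{m}\subseteq R$ be the maximal ideal --- generated by a single central element, e.g.\ $A+1$ or $q^{1/2}-1$ --- at which the parameter specializes to recover the classical limit, so that $\Arc/\mathfrak{m}\Arc\cong\Curve$ and $\Arc$ is a free, hence flat, $R$-module. Then $\bigcap_{k}\mathfrak{m}^{k}\Arc=0$, so every nonzero $a\in\Arc$ has a nonzero leading symbol $\sigma(a)$ in the $\mathfrak{m}$-adic associated graded, and since $\mathfrak{m}$ is principal and central this associated graded is isomorphic to $\Curve[t]$. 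As $\Curve$ is a domain, so is $\Curve[t]$; and then the standard leading-symbol argument --- if $a,b\neq 0$ then $\sigma(a)\sigma(b)\neq 0$, and this product equals $\sigma(ab)$, forcing $ab\neq 0$ --- shows $\Arc$ is a domain as well.

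I expect the main obstacle to be (i), and within it the geometric input: namely, that $\Teich$ is connected and that trace and $\lambda$-length functions are real-analytic on it. These are exactly the places where the hyperbolic geometry is doing real work, via Penner's parametrization; everything else is soft algebra or the identity theorem. A secondary point to nail down is the precise form of the deformation-quantization package used in (ii) --- in particular that $\Arc$ is free over $R$ with $\Arc/\mathfrak{m}\Arc\cong\Curve$ --- which should be cited explicitly from \cite{RogerYang14} (or from the diagrammatic basis established there) rather than taken for granted.
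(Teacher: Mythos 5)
Your proposal is correct, but part (i) takes a genuinely different route from the paper's, while part (ii) is the paper's argument in different clothing. For (i), the paper never needs the identity theorem: Lemma~\ref{lem:factoring} (established en route to Theorem~\ref{thm:nonzerodivisorimpliesinjectivity}) shows that $\Phi$ factors through the Laurent polynomial ring $\CC[\lambda_{i}^{\pm}]\subset C^{\infty}(\Teich)$, so injectivity of $\Phi$ immediately embeds $\Curve$ into $\CC[\lambda_{i}^{\pm}]$, which is a domain --- a purely algebraic finish. Your route (the image of $\Phi$ consists of real-analytic functions, $\Teich$ is connected by Penner's parametrization, and $C^{\omega}$ of a connected analytic manifold is a domain) is valid, but the analytic inputs you must justify --- real-analyticity of length/trace functions, of lambda-lengths, of horocycle lengths and their inverses --- are exactly what the Ptolemy/Laurent-polynomial statement packages more cheaply; indeed your own parenthetical that arc classes map to Laurent polynomials in the edge lambda-lengths essentially reproduces Lemma~\ref{lem:factoring}, at which point the identity theorem is unnecessary. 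For (ii), your leading-symbol argument coincides with the paper's: by Theorem~\ref{thm:topologicallyfree}, $\Arc\cong\Curve[[h]]$ as $\CC[[h]]$-modules, and the paper simply compares lowest-order $h$-coefficients of a product, which is your associated-graded computation made explicit. Two small points to tighten: the relevant ideal is $(h)$ in $\CC[[h]]$ (equivalently $(q-1)$, since $q=e^{h/4}$) --- the classical limit here is $q=1$, not $A=-1$, and $(h)$ is not maximal in the full ground ring $\CC[[h]][v_{i}^{\pm}]$; and ``free, hence flat'' is not the property your argument uses --- what you need is topological freeness ($\Arc\cong\Curve[[h]]$, Theorem~\ref{thm:topologicallyfree}), which is what guarantees the $h$-adic separatedness $\bigcap_{k}h^{k}\Arc=0$, that $h$ is not a zero divisor, and that $\mathrm{gr}(\Arc)\cong\Curve[t]$.
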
 

A similar statement for the Kauffman bracket skein algebra $\cS^{A}(\Sigma)$ appeared in \cite{PrzytyckiSikora00, PrzytyckiSikora19}, and was a necessary step in showing that when $A=\pm1$, $\cS^{\pm1}(\Sigma)$ is isomorphic with the coordinate ring of the $\SL_{2}$-character variety \cite{Bullock97, CharlesMarche12}.

In addition, we developed a generalization of the theory of normal curves on a surface (as in \cite{Matveev07}) to describe normal arcs.  Whereas the corner coordinates of a normal loop is an integer, the generalized corner coordinate of a normal arc ending at a puncture is the negative fraction $-\frac{1}{2}$.  The generalized corner coordinates satisfy the usual matching equation at edges.  See Section~\ref{sec:generalizedcornercoord}, which may be of independent interest in combinatorial topology.

\subsection{Summary of the proof}

We give here a brief summary of the main points of the proofs of Theorems \ref{thm:nonzerodivisorimpliesinjectivityintro} and \ref{thm:mainthmintro}. 

The key insight for Theorem~\ref{thm:nonzerodivisorimpliesinjectivityintro}  is to consider the  localization  $S^{-1}\Curve$ by the multiplicative set, $S$, that is generated by edges of an ideal triangulation.  If $\lambda_i$ denotes the lambda-length function of the $i$-th edge of the triangulation, we show that $\Phi : \Curve \to C^{\infty}(\Teich)$  factors through $\CC[\lambda_{i}^{\pm}]$ and its localized map $\Psi : S^{-1}\Curve \to \CC[\lambda_{i}^{\pm}]$ is an isomorphism.  Furthermore, we show that if none of the edges are zero divisors, then the localization map $L : \Curve \to S^{-1}\Curve$ is injective.  This implies the injectivity of $\Phi$.  See Section~\ref{sec:outline}.  

The proof of Theorem~\ref{thm:mainthmintro} is significantly more complicated, and we only mention some interesting points here.   The proof is outlined in Section~\ref{sec:outlinemainthm} and takes up Sections~\ref{sec:generalizedcornercoord}--\ref{sec:nonzerodivisor}.  The goal is to show that given any edge $e$ in a locally planar triangulation, $\beta \neq 0$  implies $e \beta \neq 0$ for every  $\beta \in \Curve$.  When $\beta = \alpha_i$, representing a  single reduced multicurve class (no self-crossings or turn-backs inside a triangle, and no component is a  trivial loop or  loop around a puncture), that $e \alpha_i \neq 0$ is fairly obvious, since $ e \alpha_{i}$ is a linear combination of distinct, linearly independent resolutions.  However, it is not so obvious when $\beta = \sum_{j \in I}f_{j}\alpha_{j}$ is a  $\CC[v_{i}^{\pm}]$-linear combination of reduced multicurves $\alpha_i$.  In particular, we must understand the various ways that  resolutions of $e \alpha_i$ and $e \alpha_j$ could cancel out in $e \beta$, in order to rule out the scenario where \emph{all} the resolutions cancel each other out in $e \beta$. 
 
Our solution is to define an order on the reduced multicurves and to consider resolutions that produce ``leading terms'' according to that order.  In particular, we consider the two resolutions of $e \alpha_{i}$ without turnbacks, the so-called \emph{positive} and \emph{negative} resolutions, $P_{e}(\alpha_{i})$ and $N_{e}(\alpha_{i})$, respectively.  Our strategy is to explicitly find an $\alpha_{i}$ component of $\beta$ so that the positive resolution $P_{e}(\alpha_{i})$ becomes the leading term of $ e \beta$.    Although this strategy is very much inspired by similar results and techniques developed for the Kauffman bracket skein algebra, e.g. recently in \cite{PrzytyckiSikora19, FrohmanJKBLe19}, multiplying by arcs leads to complications not present when only looking at loops.  For example, in the Roger-Yang skein algebra, there are numerous cases where  $\alpha_{i} \neq \alpha_{j}$ but  $P_{e}(\alpha_{i})= N_{e}( \alpha_{j})$, even when $\alpha_{i}$ and $\alpha_{j}$ have the same order.  We found that most natural and reasonably simple choices of order produced such examples, so  cancellations in $e \beta$ were consistently an issue.   See Remark \ref{rem:algviewpoint} below.  

To understand when cancellations happen, we needed  a very precise description of the positive and negative resolutions, which we found tractable in the locally planar case.  In the larger non-locally planar examples that we examined, the positive and negative resolutions can simplify in very unexpected ways, and finding explicit formulas for them seemed to require ad hoc methods.  We nonetheless believe that our method should still work; namely, that it is possible to show that no edge of a triangulation zero divisor, even when the triangulation is not locally planar. 

We close this section with a few remarks. 
\begin{remark} \label{rem:algviewpoint}
In the algebraic viewpoint, a natural way to attack  Theorem \ref{thm:mainthmintro} is as follows: First, introduce a total order $\succ$ on the generating set of multicurves of $\Curve$. Next, establish a particular resolution $R$ which is a $\succ$-preserving map, i.e.,  so that $\alpha \succ \beta$ implies $R \alpha \succ R \beta$, and $R \alpha$ is the leading term in $e \alpha$. Finally, prove that for any $\beta = \sum_{j \in I}f_{j}\alpha_{j}$, if $\alpha_{0}$ is the leading term, then $R\alpha_{0}$ is the nonzero leading term of $e\beta$, thus $e\beta \ne 0$.  For example, such an algebraic scheme was successfully implemented for the Kauffman skein algebra, \cite{PrzytyckiSikora19, FrohmanJKBLe19}.  

In our context, there are a number of natural candidates for $R$. However, as we mentioned briefly above, we were unable to find a total order $\succ$ satisfying the algebraic scheme just described.  Various, different issues arose, mainly because of the existence of arc classes.  Thus we decided to use a partial order, and relied on some extra tie-breaking conditions when necessary. 
\end{remark}

\begin{remark}
Conjecture~\ref{conj:injectivity} is verified for the non-locally planar cases of the three-puncture sphere and one-punctured torus. One can  directly compute, or use the presentations of the Roger-Yang skein algebra from \cite{BKPW16Involve}, to show that no edge is a zero divisor.  
\end{remark} 

\begin{remark}
A natural way to extend Theorem \ref{thm:mainthmintro} to arbitrary $\Sigma$ is to drill new punctures and get another pointed surface $\Sigma'$ which is locally planar, and then compare $\Curve$ and $\mathscr{C}(\Sigma')$. However, the lack of functorial morphisms makes comparing $\Curve$ and $\mathscr{C}(\Sigma')$ difficult. 
\end{remark}

\begin{remark}
The proof of the three main theorems are completely independent from the choice of base ring. So one may use arbitrary commutative ring $A$ instead of $\CC$, with a replacement of $C^{\infty}(\Teich)$ by the ring of $A$-valued functions on $\Teich$. 
\end{remark}

\subsection{An extended remark about the Muller skein algebra} \label{ssec:Muller}

At about the same time as Roger and Yang used hyperbolic geometry to motivate the algebras $\Curve$ and $\Arc$ for punctured surfaces, Muller \cite{Muller16} used the theory of cluster algebras to define a different set of algebras for surfaces with marked points on its boundary.  Both theories relate to the decorated Teichm\"uller space $\Teich$, and so they are expected to be parallel in many ways.  However, explicit connections between the two points of view are still lacking.

To start, the algebra generated by lambda-length functions of edges of an ideal triangulation forms a cluster algebra $\cA_{1}(\Sigma)$, so that $\Teich$ has a cluster manifold structure (\cite{GekhtmanShapiroVainshtein05}, see also \cite{FominShapiroThurston08}). This result applies for any surface with markings.  This includes the case of a surface with punctures  (the $\Sigma$ studied in this paper), and  a surface with non-empty boundary and marked points on the boundary (which, to contrast, we denote by $\hat{\Sigma}$).   

In the latter case, Muller in \cite{Muller16} defined three related algebras related to $\mathscr{T}^{d}(\hat{\Sigma})$. Based on the work of \cite{BerensteinZelevinsky05}, he defined a quantum cluster algebra $\mathcal{A}_{q}(\hat{\Sigma})$ and  an upper quantum cluster algebra $\mathcal{U}_{q}(\hat{\Sigma})$ associated to $\hat{\Sigma}$. When $q=1$, the quantum cluster algebra corresponds to $\mathcal{A}_{1}(\hat{\Sigma})$ in the previous paragraph. In addition, Muller also defined a skein algebra $\Muller$ that is generated by framed loops and arcs which end at the marked points on the boundary components.   
Muller showed that there are natural inclusions
\begin{equation} \label{eqn:Mulleralgebras}
	\mathcal{A}_{q}(\hat{\Sigma}) \subseteq T^{-1}\Muller \subseteq \mathcal{U}_{q}(\hat{\Sigma})
\end{equation}
where $T^{-1}\Muller$ is the non-commutative localization of $\Muller$ by $T$, the set containing the boundary parallel curves. When there are  two or more marked points on each boundary component, the inclusions are equalities, so that the skein algebra is identical to the two quantum cluster algebras. Up to localization, $T^{-1}\Mullerone$ becomes isomorphic to the algebraic coordinate ring $\cO(\mathscr{T}^{d}(\hat{\Sigma}))$.

Returning to the case of punctured surfaces that we study in this paper, at least for the classical case ($q = 1$ or $h = 0$), we expect that Roger and Yang's skein algebra $\Arc$ fits into a similar framework.  If $\Sigma$ is locally planar, we obtain natural inclusions
\begin{equation}\label{eqn:RYalgebras}
	\cA_{1}(\Sigma) \subseteq \Curve = \mathscr{S}_{0}(\Sigma) \subseteq \mathcal{U}_{1}(\Sigma)
\end{equation}
(see Remark \ref{rem:clusteralgebra}).   However, except for the simplest cases, we do not know if the inclusions are equalities.  Note that in contrast to Equation \eqref{eqn:Mulleralgebras}, in \eqref{eqn:RYalgebras} there is no further localization. 

The analogy between the Roger-Yang and Muller algebras may extend also to the more algebraic geometric approach of Fock and Goncharov.  In \cite{FockGoncharov06}, they described how to understand $\cA_{1}(\hat{\Sigma})$ as a coordinate ring of a certain moduli space of decorated $\SL_{2}$ local systems. It would be really interesting to have an analogous statement for $\Sigma$, i.e., find a moduli space $B(\Sigma)$ whose coordinate ring (or  Cox ring) $\cO(B(\Sigma))$
is isomorphic to $\Curve$.

\subsection{Structure of the paper}

Section ~\ref{sec:curvealgebra} reviews the main points of \cite{RogerYang14}.  In particular, we define the curve algebra $\Curve$,  the decorated Teichmuller space $\Teich$, and the map $\Phi$.  Theorem \ref{thm:nonzerodivisorimpliesinjectivityintro} is proven in Section~\ref{sec:outline}.  Section \ref{sec:locallyplanar} is a very short section introducing locally planar surfaces.  The proof of Theorem~\ref{thm:mainthmintro} is outlined in Section \ref{sec:outlinemainthm}, and the details appear in Sections Sections~\ref{sec:generalizedcornercoord}--\ref{sec:nonzerodivisor}.  Note that in Section~\ref{sec:generalizedcornercoord}  we generalize the theory of normal curves on surfaces for normal arcs, and this may be of independent interest.  In Section \ref{sec:arcalgebra} we define Roger-Yang's skein algebra  $\Arc$, and we prove Theorem \ref{thm:nozerodivisorsintro}.

\subsection*{Acknowledgements}
The authors met while both were members at the Institute for Advanced Study in 2017-18, and we gratefully acknowledge the IAS's financial support, hospitality, and childcare throughout this collaboration.  H.M. was partially supported by the Minerva Research Foundation while he was staying at IAS. In addition, H.W. was partially funded by NSF DMS-1841221 and DMS-1906323.   We would like to also thank Tian Yang for pointing out this research problem.  \\



\section{Background: Roger and Yang's curve algebra and  decorated Teichm\"uller space} \label{sec:curvealgebra}

\subsection{Triangulation}

We begin with some notation for a surface with triangulation. 

Let $\overline{\Sigma}$ be a compact Riemann surface and $V = \{v_{i}\}$ be a finite set of points in $\overline{\Sigma}$. Then $\Sigma := \overline{\Sigma} \setminus V$ is a \emph{punctured surface} and $V$ is the set of its \emph{punctures}. 

For a triangulation $\cT = (V, E, T)$ of a compact Riemann surface $\overline{\Sigma}$, let $V$ be the set of vertices, $E$ be the set of edges, and $T$ be the set of triangles. A triangulation for a punctured surface $\Sigma = \overline{\Sigma} \setminus V$ is a triangulation of $\overline{\Sigma}$ whose vertex set is $V$. 

A \emph{corner} of $\cT$ is a pair $(v, \Delta)$ where $\Delta \in T$ is a triangle and $v \in V$ is a vertex of $\Delta$. Let $C$ be the set of all corners of $\cT$. 

\subsection{Curve classes on a punctured surface}\label{ssec:curve}

Let $\Sigma = \overline{\Sigma} \setminus V$ be a punctured surface. A \emph{loop} in $\Sigma$ is an immersion of a circle into $\overline \Sigma$ that is disjoint from $V$.  An \emph{arc} in $\Sigma$  is a map of a closed finite interval into $\overline \Sigma$ such that the interior of the interval is immersed into $\overline \Sigma \setminus V$, and the endpoints of the interval are mapped to (one or two points in) $V$. A \emph{generalized multicurve} in $\Sigma$  is a union of finitely many loops and arcs in $\Sigma$. Note that more than one component of a multicurve may have endpoints at the same puncture. If $\alpha$ and $\beta$ are two multicurves, then we denote their union, which is again a multicurve, by $\alpha \cdot \beta$ or $\alpha \beta$.  

We will consider multicurves up to regular homotopy, as defined in detail in \cite{RogerYang14, Whitney37}. We may thus assume that multicurves are in general position and although many arcs can end at a vertex, only double points occur in the interior.

Let $\cT$ be a fixed triangulation of $\Sigma$. We may further assume that our curve class $\alpha$ is general with respect to this triangulation.  By this we mean that for any edge $e \in E$, if $\alpha$ intersects the relative interior of $e$ then the intersection is transversal and if $\alpha$ ends at a vertex $v$, then any component of $\alpha$ does not tangent to any edge $e \in E$ at $v$, except the case that $e$ is a component of $\alpha$. 

A \emph{trivial loop} in $\Sigma$ is a loop bounding a disk in $\overline \Sigma$ that contains no punctures, and a \emph{puncture loop} in $\Sigma$ is a loop bounding a disk in $\overline \Sigma$ that contains exactly one puncture.  

\begin{definition}\label{def:RMCRML}
Let $\Sigma$ be a surface with a triangulation $\cT$. We say that a general multicurve $\alpha$ on $\Sigma$ is \emph{normal} if the map $\alpha: (\sqcup S^1) \sqcup (\sqcup I) \to \overline{\Sigma}$ is an embedding and there are no turn-backs inside any triangle. If all of its components are loops, we call the normal multicurve also by the term \emph{normal multiloops}. Let $\NMC$ be the set of isotopic classes of normal multicurves and $\NML$ be the set of isotopic classes of normal multiloops. 

A normal multicurve is \emph{reduced} if no component is a trivial loop or a puncture loop. By convention, the empty set $\emptyset$ is a reduced multicurve. Let $\RMC$ be the set of isotopic classes of reduced multicurves on $\Sigma$. Let $\RML$ be the subset of isotopic classes of reduced multiloops. 
\end{definition}

A reduced multicurve has no crossings and at each vertex $v$, there is at most one arc connecting $v$. 

\begin{remark}
Note that the definition of a normal multiloop is same to that of normal curve in \cite[Section 3.2]{Matveev07}. 
\end{remark}

\subsection{Decorated Teichm\"uller space}\label{ssec:teichmuller}

Suppose for the moment that $\chi(\Sigma)<0$ so that the surface $\Sigma$ with punctures $V$ admits a hyperbolic metric.  In \cite{Penner87}, Penner introduced the \emph{decorated Teichm\"uller space} $\Teich$ to be the space of pairs $(m, r)$ where $m$ is a complete hyperbolic metric on  $\Sigma$ with finite area, regarded up to an isometry that is isotopic to the identity map, and  $r: V \to \RR$ is a function which assigns a length $r(v)$ horocycle to each puncture $v \in V$. The decoration of horocycles $r$ allows us to measure the length of arcs in $\Sigma$.  In particular, given some $(m, r) \in \Teich$ and $\alpha$ an arc, the length $\ell(\alpha)$ is (up to sign) the length in the metric $m$ of the part of $\alpha$ between the horocycles described by $r$.  When $\alpha$ is a loop, its length  $\ell(\alpha)$ is the usual one determined by $m$. 

Define the  \emph{lambda-length} to be $	\lambda(\alpha) = e^{\ell(\alpha)/2}$ when $\alpha$ is an arc class,
and 	$\lambda(\alpha) = 2\cosh \frac{\ell(\alpha)}{2}$ when $\alpha$ is a loop.  The decorated Teichm\"uller space is parametrized by the lambda-length functions; more specifically we have the following theorem due to Penner.  

\begin{theorem}\cite[Theorem 3.1]{Penner87}\label{thm:homeomorphism}
Let $\{e_{1}, \cdots, e_{n}\}$ be the set of edges of a triangulation $\cT$  of $\Sigma$.  Then there is a homeomorphism
$
	\lambda : \Teich \to \RR_{> 0}^{n}
$
which maps each edge $e_{i}$ to its lambda-length $\lambda_{i} = \lambda(e_{i})$.
\end{theorem}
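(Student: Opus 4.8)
The plan is to reconstruct a decorated hyperbolic metric explicitly from its edge lambda-lengths, working throughout in the Minkowski (hyperboloid) model of $\mathbb{H}^{2}$. Recall that a decorated ideal point is recorded by a vector $u$ on the positive light cone $L^{+}\subset\RR^{2,1}$: the ray through $u$ gives the ideal point $[u]\in\partial\mathbb{H}^{2}$, and $u$ itself selects a horocycle centered there. For two such vectors $u,u'$ attached to the endpoints of an arc, the corresponding lambda-length is, up to Penner's normalization, $\lambda(u,u')=\sqrt{-\langle u,u'\rangle}$, where $\langle\cdot,\cdot\rangle$ is the Lorentzian form. In this language a point of $\Teich$ amounts to a discrete faithful representation $\rho:\pi_{1}(\Sigma)\to\mathrm{SO}^{+}(2,1)\cong\PSL_{2}(\RR)$ with parabolic holonomy around every puncture, together with a $\rho$-equivariant assignment $\tilde v\mapsto u_{\tilde v}\in L^{+}$ on the vertices of the lifted triangulation $\widetilde{\cT}$, modulo the $\mathrm{SO}^{+}(2,1)$-action; the map $\lambda$ simply records the numbers $-\langle u_{\tilde v},u_{\tilde w}\rangle$ over the edges, so it is visibly well defined and continuous.

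To show $\lambda$ is \emph{injective} I would recover the equivariant family $\{u_{\tilde v}\}$ from the edge lengths by propagating through $\widetilde{\cT}$. Place one triangle $\Delta_{0}$ of $\widetilde{\cT}$ by choosing light-cone vectors for its three vertices with the prescribed pairwise Lorentzian products; this pins $\Delta_{0}$ down up to $\mathrm{SO}^{+}(2,1)$. Thereafter, whenever a triangle shares an edge $\tilde v\tilde w$ with an already placed one whose third vertex is at $u_{\tilde x}$, its own third vertex is forced to be the unique vector of $L^{+}$ realizing the two prescribed inner products with $u_{\tilde v},u_{\tilde w}$ and lying on the correct side of the edge (two independent linear conditions meet the $2$-dimensional cone $L^{+}$ in two points, and the side condition selects one). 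As $\widetilde{\Sigma}$ is connected this determines the whole configuration, hence $\rho$ and the decoration, from the lambda-lengths up to the initial $\mathrm{SO}^{+}(2,1)$-choice; and modding out by homeomorphisms isotopic to the identity is exactly modding out by that choice, once one normalizes so that $\widetilde{\cT}$ develops to straight ideal triangles. Hence $\lambda$ is injective.

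For \emph{surjectivity}, given $(\lambda_{1},\dots,\lambda_{n})\in\RR_{>0}^{n}$ I would run the same propagation from an arbitrary starting triangle. Every step is realizable because there is no triangle inequality for ideal triangles: any three positive reals arise as the edge lambda-lengths of a decorated ideal triangle (equivalently the relevant $3\times3$ Gram matrix has vanishing diagonal and negative determinant, hence Lorentzian signature). Moreover, since $\widetilde{\Sigma}$ has no interior vertices, the dual graph of $\widetilde{\cT}$ is a tree, so the propagation is along a tree and meets no consistency obstruction; it produces a developing map together with a holonomy $\rho:\pi_{1}(\Sigma)\to\mathrm{SO}^{+}(2,1)$ for free, with $\{u_{\tilde v}\}$ being $\rho$-equivariant because the lambda-length data is $\pi_{1}$-invariant. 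The real content is that $\rho$ lands in the complete locus. Around a puncture $v$, the generator $\gamma_{v}$ of the stabilizer of a lift $\tilde v$ fixes $\tilde v$ and permutes its fan of triangles; by equivariance $\rho(\gamma_{v})\,u_{\tilde v}=u_{\gamma_{v}\tilde v}=u_{\tilde v}$, so $\rho(\gamma_{v})$ fixes a nonzero light-cone vector and is therefore parabolic, and it is nontrivial because its horocyclic displacement equals the sum of the positive $h$-lengths of the corners at $v$. A finite gluing of ideal triangles whose puncture holonomies are all nontrivial parabolics is complete, so the developing map is a homeomorphism onto $\mathbb{H}^{2}$, $\rho$ is discrete and faithful, and we have produced a genuine point of $\Teich$ with the prescribed lambda-lengths. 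The inverse of $\lambda$ is then continuous because the whole construction depends real-analytically on the $\lambda_{i}$ (or: invoke invariance of domain, using that $\Teich$ is a manifold of dimension $n=-3\chi(\Sigma)$).

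The step I expect to be the main obstacle is this last one: a priori the developed triangles, glued according to the $\lambda_{i}$, could produce loxodromic holonomy around a puncture and hence an incomplete structure with a funnel end, which would not lie in $\Teich$. Ruling this out is precisely where the data must include not only the ideal points but the actual light-cone vectors $u_{\tilde v}$ — the horocycle decorations — together with the equivariance/periodicity argument above. Everything else reduces to careful but routine bookkeeping of the developing map and the $\mathrm{SO}^{+}(2,1)$-action.
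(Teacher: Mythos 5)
Theorem~\ref{thm:homeomorphism} is not proved in this paper at all: it is quoted verbatim from Penner \cite{Penner87}, so there is no internal argument to compare yours against. What you have written is, in outline, a faithful reconstruction of Penner's original argument in the Minkowski model: identify decorated ideal points with vectors on the positive light cone, read off $\lambda^{2}=-\langle u,u'\rangle$, recover the equivariant configuration $\{u_{\tilde v}\}$ by propagating across the tree dual to the lifted triangulation (injectivity), and for surjectivity use that any triple of positive reals is realized by a decorated ideal triangle, then check completeness of the glued structure. Your key observations are sound: an element of $\mathrm{SO}^{+}(2,1)$ fixing a nonzero vector of $L^{+}$ (not merely its ray) is parabolic or trivial, nontriviality follows from the positive total horocyclic displacement, and requiring $\langle u,u_{\tilde v}\rangle<0$ automatically places the propagated solution on the positive cone. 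The places where your sketch leans on standard but nontrivial inputs, and which a complete write-up should cite or prove, are: (i) the completeness criterion ``gluing of finitely many ideal triangles is complete iff every puncture holonomy is a nontrivial parabolic'' (Thurston's criterion); (ii) the identification of $\Teich$ with conjugacy classes of decorated discrete faithful representations with parabolic peripheral holonomy, and, for injectivity, the fact that the chosen ideal triangulation can be straightened to a geodesic one so that the lifted triangles are genuine ideal triangles with vertices at the decorated lifts; and (iii) the equivariance of the propagated configuration, which follows from uniqueness of the propagation once the initial triangle is fixed. With those standard facts in place your argument is correct, and your closing remark correctly locates the real content (ruling out non-parabolic, incomplete ends) exactly where Penner's decoration-based argument places it.
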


Note that $\Teich$ is a Zariski-dense semi-algebraic set in the complex $n$-dimensional torus $\spec \CC[\lambda_{i}^{\pm}]_{1 \le i \le n} \cong (\CC^{*})^{n}$. More precisely, $\Teich$ is the set of positive real points $(\CC^{*})^{n}(\RR_{> 0})$ in $(\CC^{*})^{n}$. There are no algebraic relations between the $\lambda_{i}$'s.

\subsection{The curve algebra $\Curve$ of loops and arcs in a punctured surface}\label{ssec:curvealgebra}

To define the curve algebra $\Curve$, we associate an indeterminate $v_i$ for each puncture in $V$, and further assume that the formal inverse $v_i^{-1}$ exists.  (Note, by a slight abuse of notation, we use $v_i$ for both a puncture and its associated indeterminate variable.)  Let $\CC[v_i^{\pm1}]$ denote that $\CC$-algebra generated by $\{v_i^{\pm 1}\}$.  

\begin{definition}\label{def:curvealgebra}
The (classical) \emph{curve algebra $\Curve$} is the $\CC[v_i^{\pm1}]$-algebra freely generated by by the generalized multicurves in $\Sigma$ modded out by the following relations
\begin{align*}
&1)
\quad
\begin{minipage}{.5in}\includegraphics[width=\textwidth]{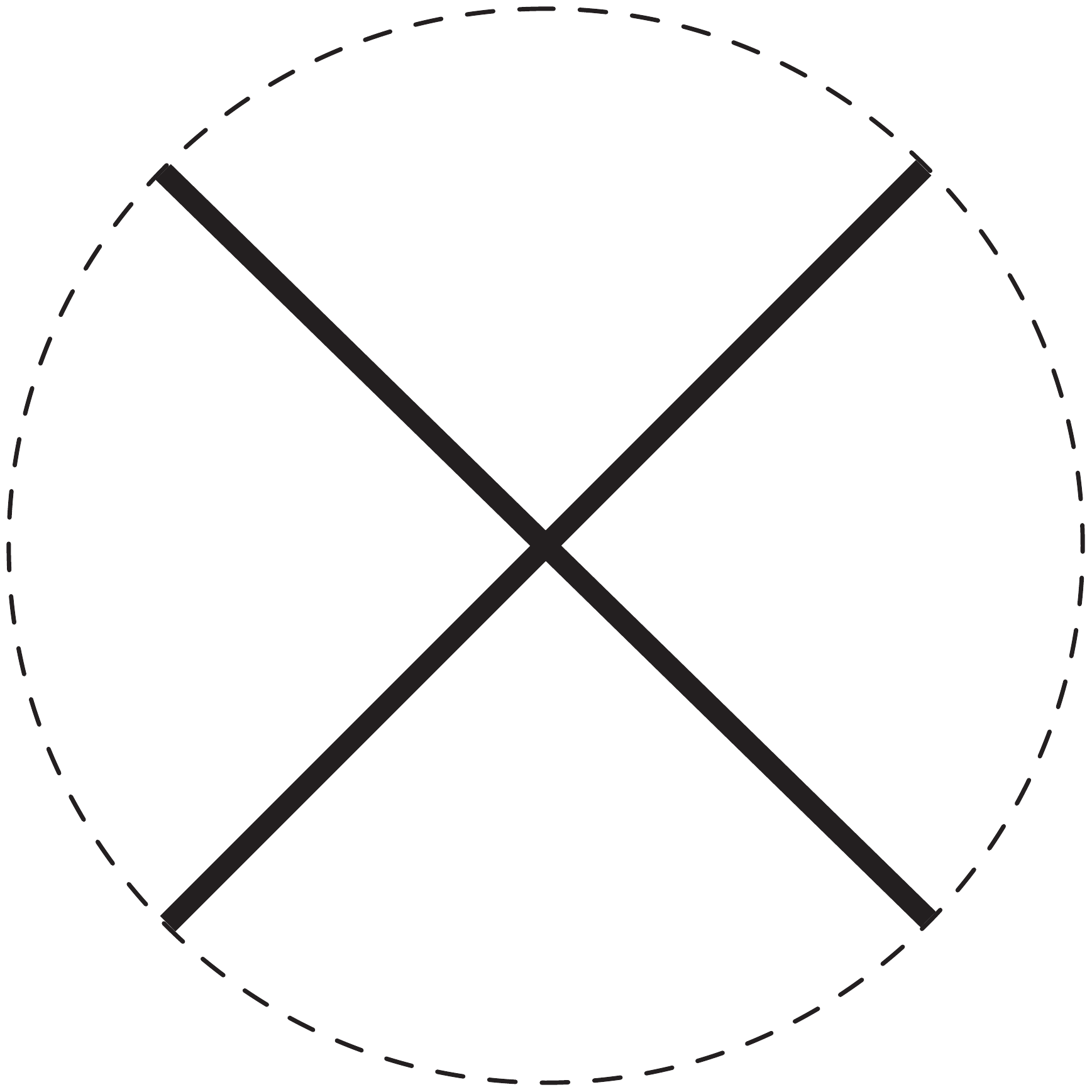}\end{minipage} 
-  \left( \begin{minipage}{.5in}\includegraphics[width=\textwidth]{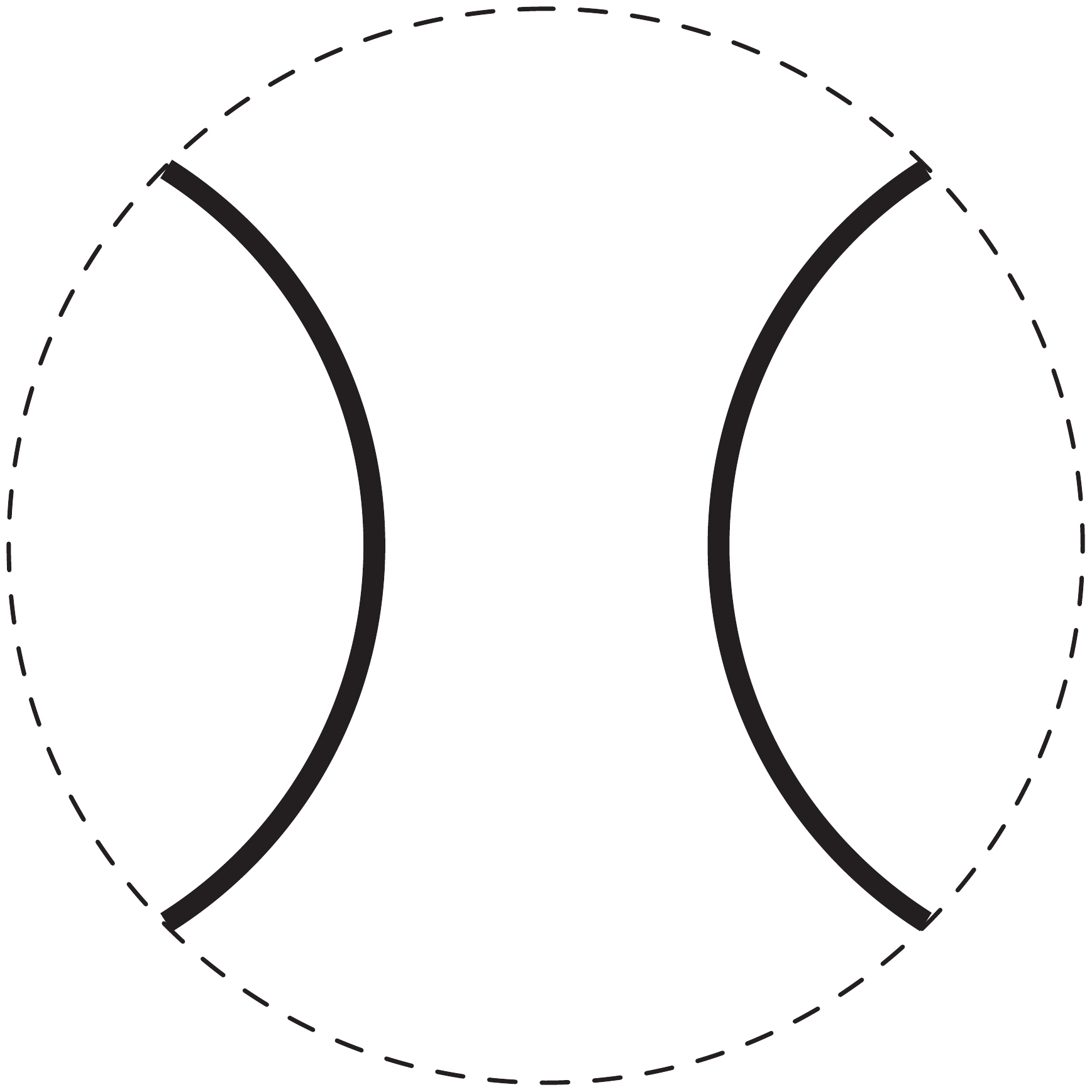}\end{minipage} 
+\begin{minipage}{.5in}\includegraphics[width=\textwidth]{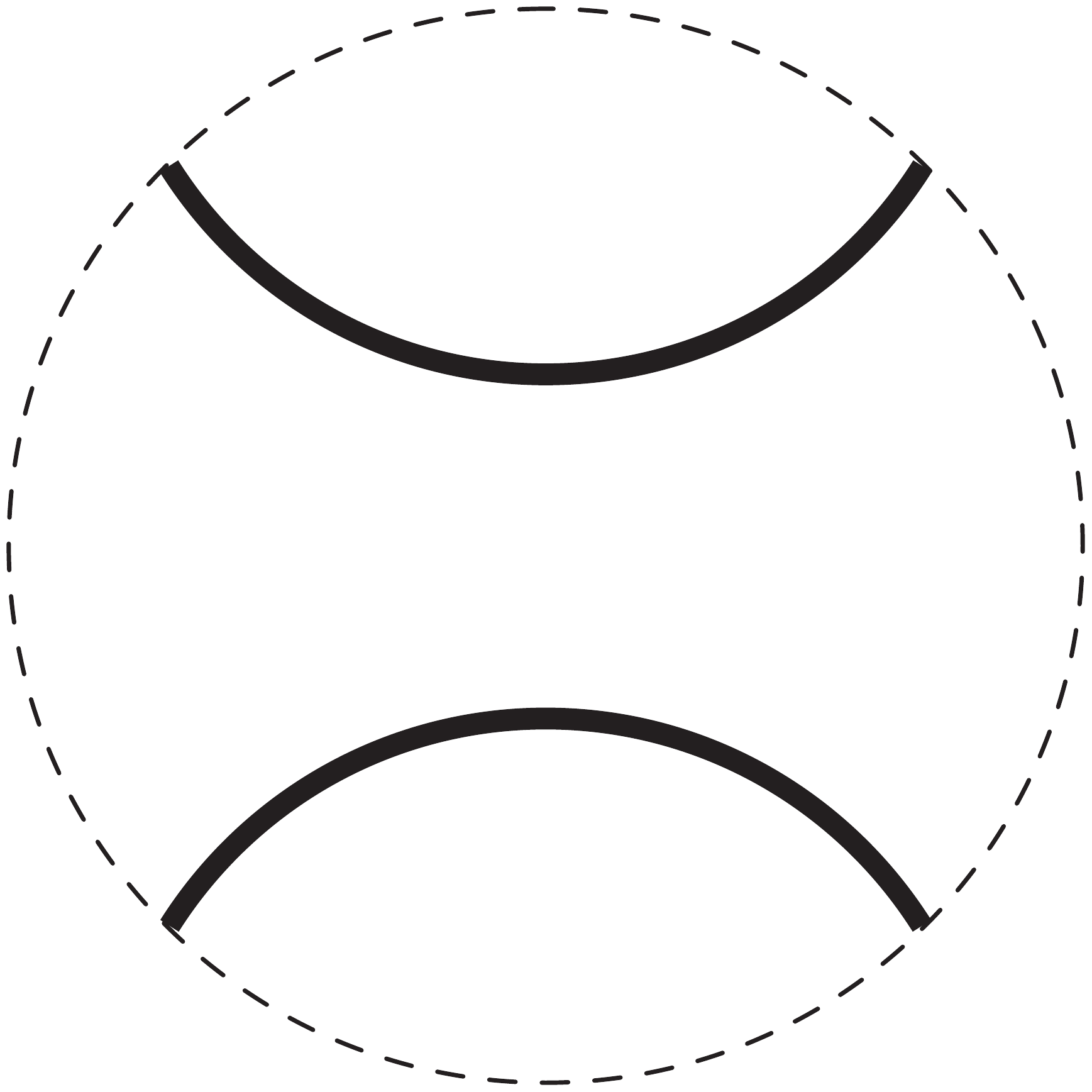}\end{minipage}  \right)\\
&2)
\quad 
v_i \begin{minipage}{.5in}\includegraphics[width=\textwidth]{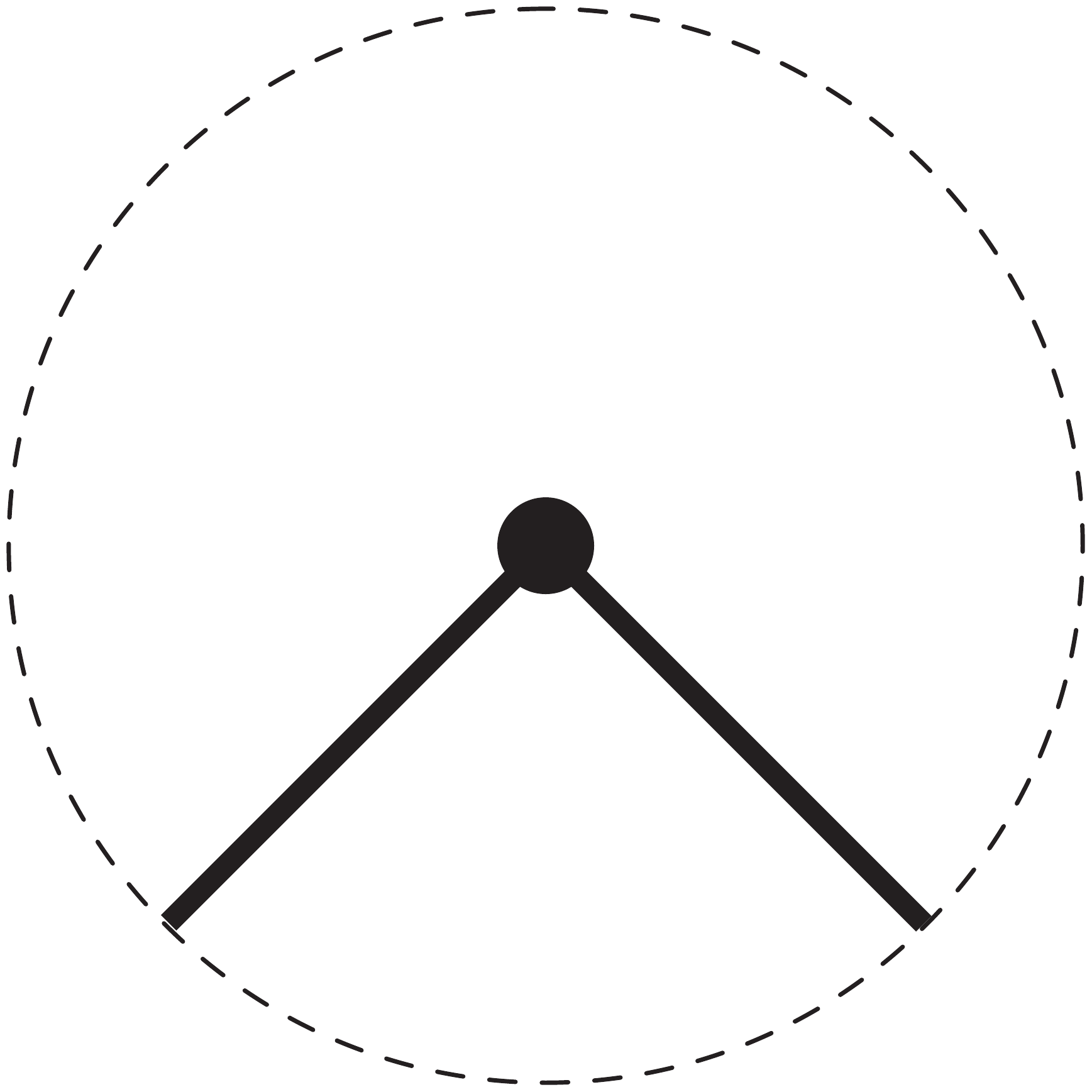}\end{minipage} 
- \left( \begin{minipage}{.5in}\includegraphics[width=\textwidth]{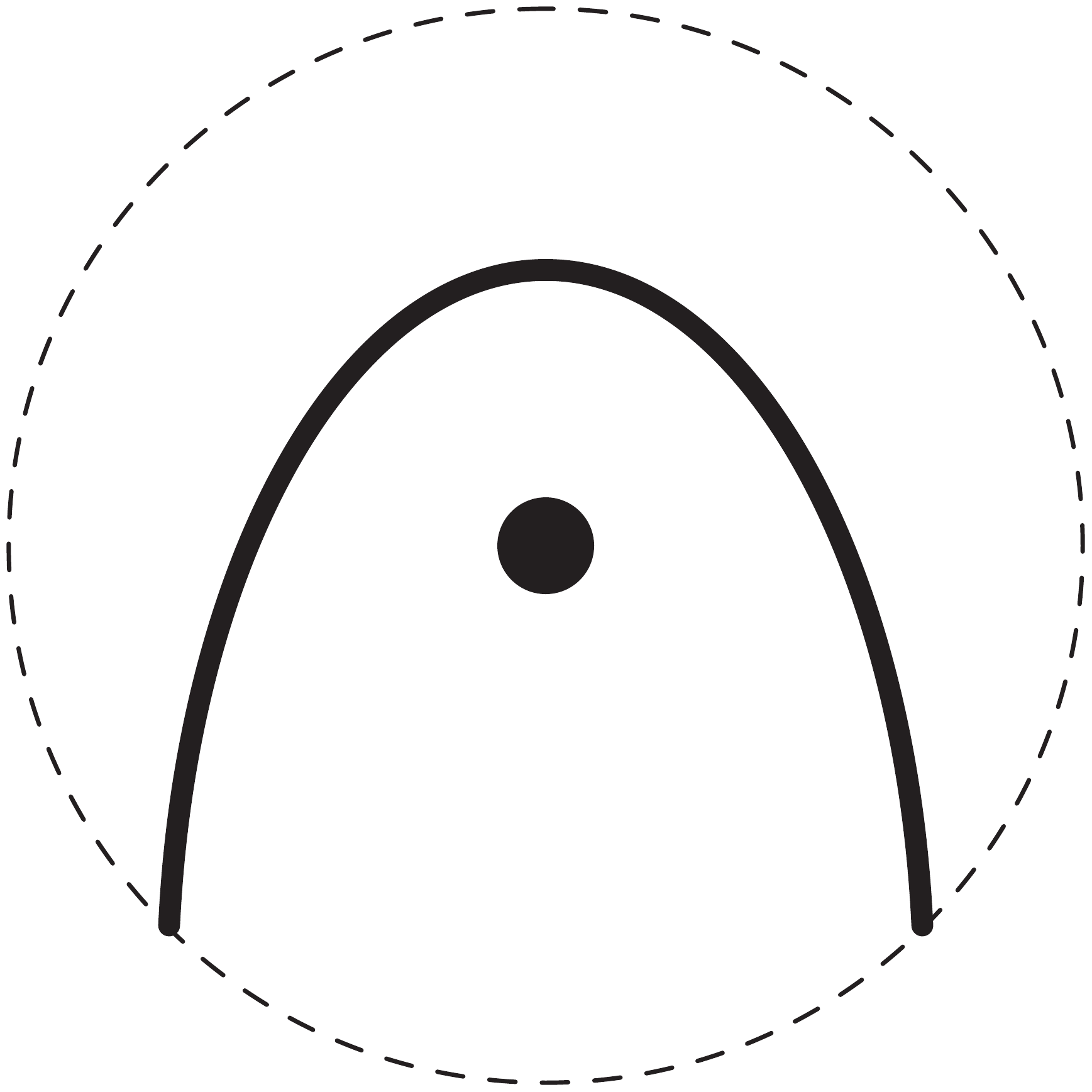}\end{minipage} 
+\begin{minipage}{.5in}\includegraphics[width=\textwidth]{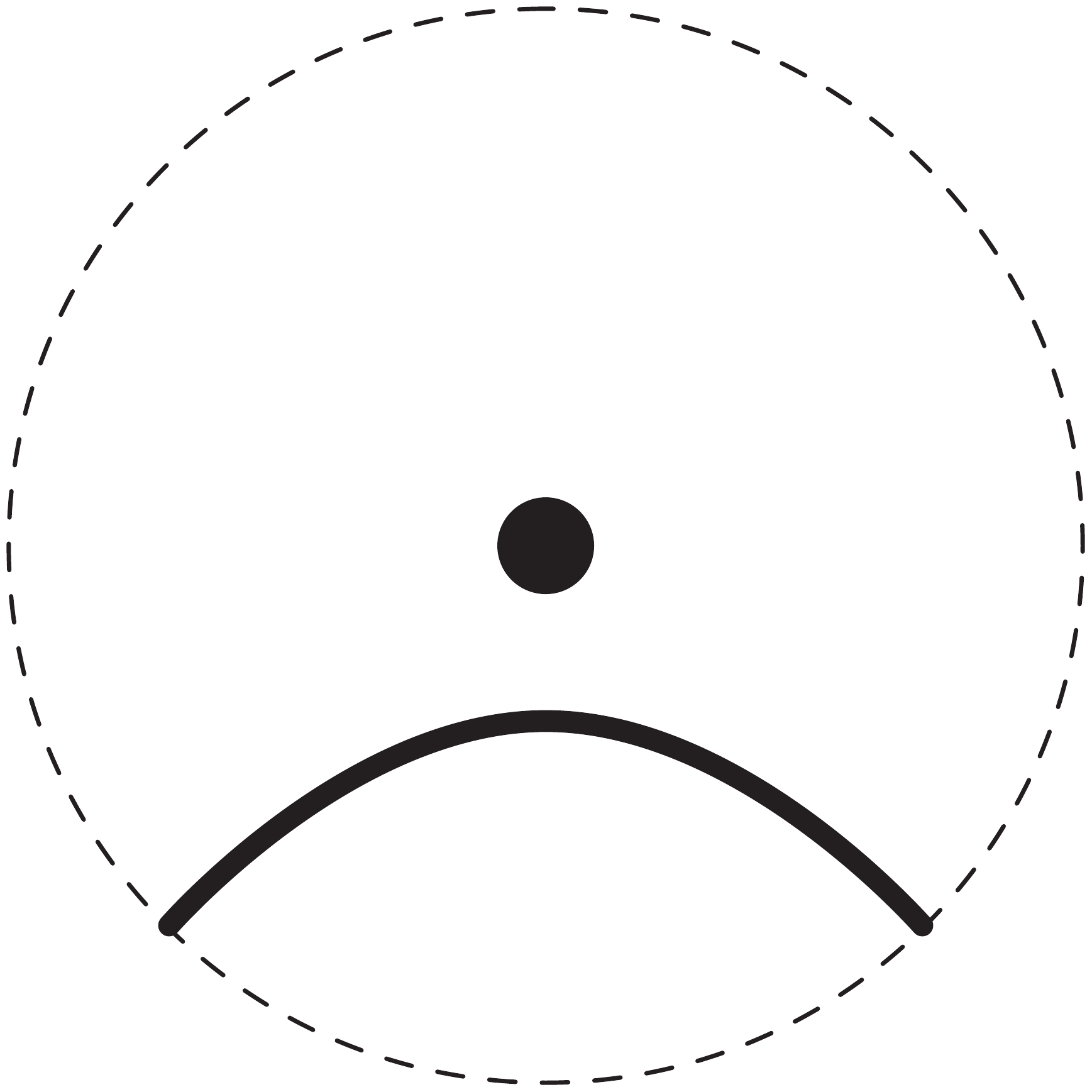}\end{minipage}  \right)\\
&3)
\quad 
\begin{minipage}{.5in}\includegraphics[width=\textwidth]{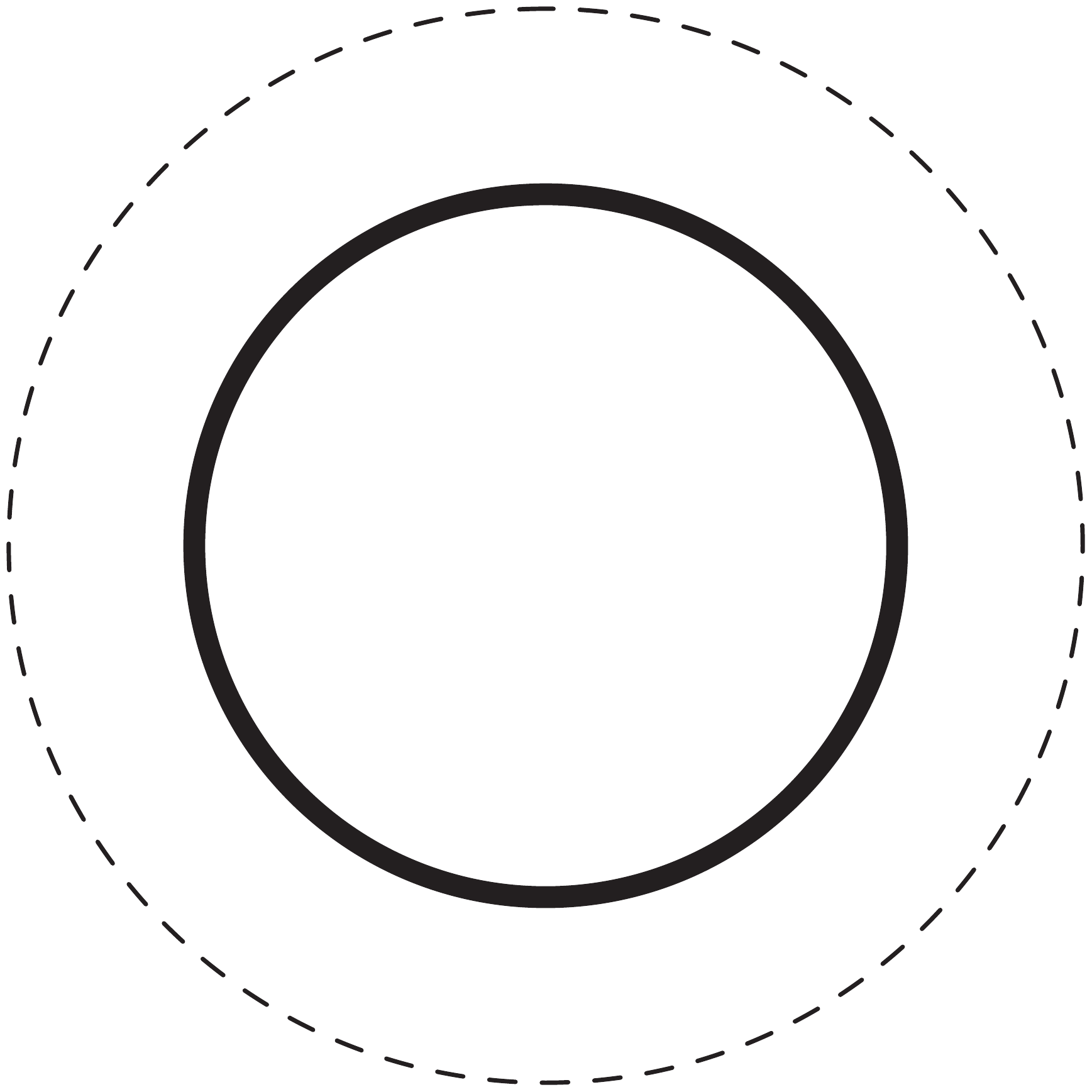} \end{minipage} 
+ 2\\
&4)
\quad 
\begin{minipage}{.5in}\includegraphics[width=\textwidth]{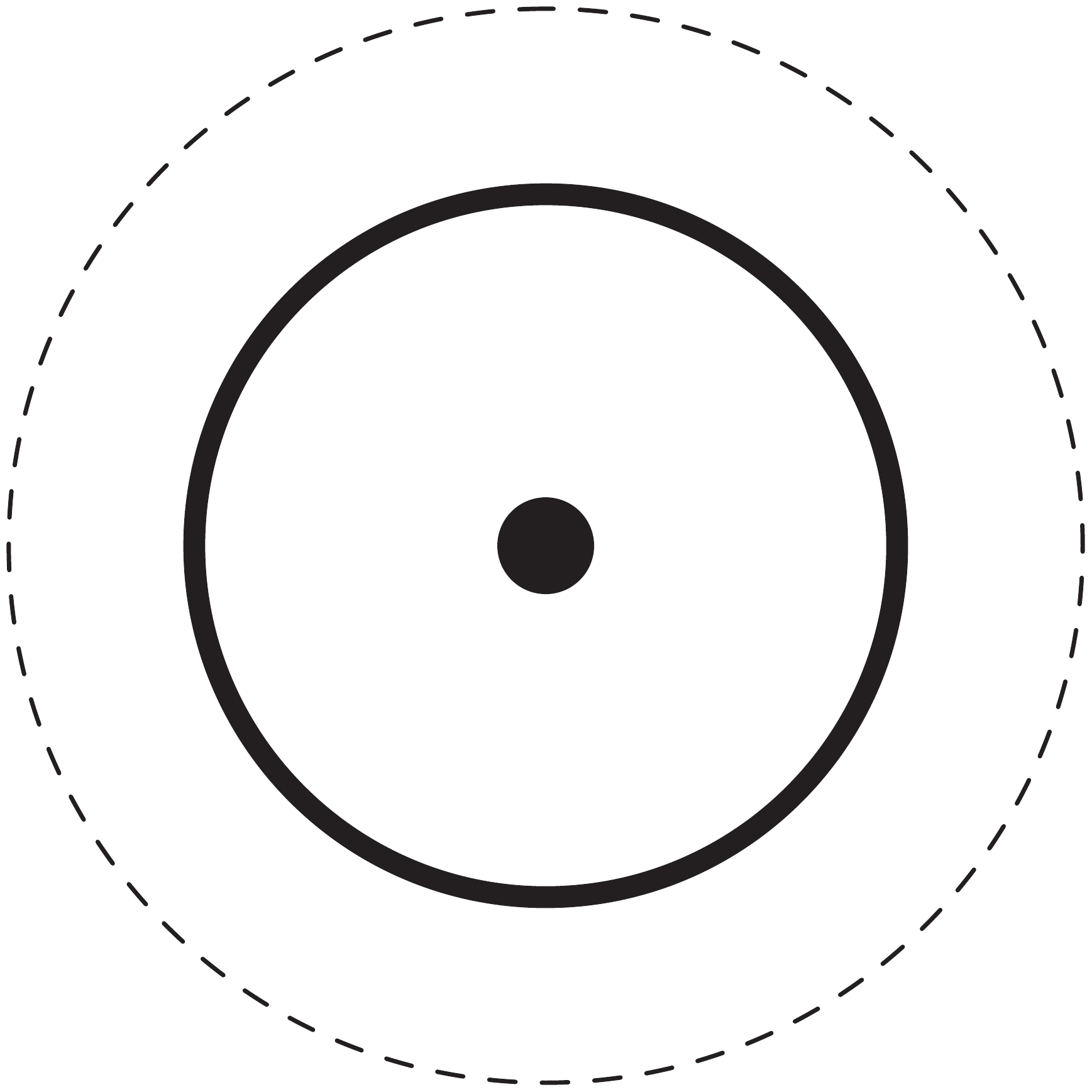} \end{minipage} 
-2\\ 
\end{align*}
where the diagrams in the relations are assumed to be identical outside of the small balls depicted.  Multiplication of elements in $\Curve$ is the one induced by taking the union of generalized curves in $\Sigma$, and the unit is the empty curve $\emptyset$. 
\end{definition}

\begin{remark}[On vector notation]
We will make the following notational convention, to use in the following proposition and throughout the rest of the paper.  
When we need to describe a tuple of objects, we use a boldface letter. For instance, for  a finite subset $\{v_{1}, v_{2}, \cdots, v_{n}\}$ of a commutative algebra $R$ and an integral vector $\bm = (m_{1}, m_{2}, \cdots, m_{n})$, $\bv^{\bm} = \prod_{i=1}^{n}v_{i}^{m_{i}}$. \\
\end{remark}

\begin{proposition}[\protect{\cite[Remark 2.5, Proposition 2.10]{RogerYang14}}]\label{prop:generatorsofcurvealgebra}
Let $W$ be the $\CC$-vector space generated by $\RMC$. Then $\Curve \cong W \otimes \CC[v_{i}^{\pm}]$. In other words, any $\beta \in \Curve$ can be written uniquely as a finite sum
\[
	\sum_{\bm \in \ZZ^{C}}\beta_{\bm}\bv^{\bm},
\]
where $\beta_{\bm}$ is a $\CC$-linear combination of elements in $\RMC$ and $\bv^{\bm}$ is a monomial in $\CC[v_{i}^{\pm}]$. And $\beta \in \Curve$ can also be written uniquely as a finite sum
\[
	\sum_{j \in I}f_{j}(v_{i}^{\pm})\alpha_{j}
\]
where $\alpha_{j} \in \RMC$ and $f_{j}(v_{i}^{\pm}) \in \CC[v_{i}^{\pm}]$. 
\end{proposition}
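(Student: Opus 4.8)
The plan is to establish the claim in two stages, corresponding to the two normal forms asserted. The core of the argument is the statement that $\Curve \cong W \otimes_{\CC} \CC[v_i^{\pm}]$ as a $\CC[v_i^{\pm}]$-module, where $W$ is the free $\CC$-vector space on $\RMC$; once this is known, both uniqueness statements follow immediately by collecting terms. First I would recall from \cite{RogerYang14} that the relations $1)$--$4)$ in Definition~\ref{def:curvealgebra} give a well-defined reduction procedure: starting from an arbitrary generalized multicurve in general position with respect to $\cT$, one repeatedly applies relations $1)$ and $2)$ at crossings and turn-backs, and relations $3)$ and $4)$ to eliminate trivial and puncture loops. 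Relations $3)$ and $4)$ replace such a loop by the scalar $-2$ (resp. $-2$, appropriately placed with the $v_i$), while relations $1)$ and $2)$ replace a crossing by a $\CC[v_i^{\pm}]$-linear combination of diagrams with fewer crossings. One must check this process terminates — this is the standard complexity argument (e.g. order by geometric intersection number with the triangulation, then by number of crossings, lexicographically) — and that the output is a $\CC[v_i^{\pm}]$-linear combination of elements of $\RMC$. This shows $\RMC$ \emph{spans} $\Curve$ over $\CC[v_i^{\pm}]$, equivalently $\Curve$ is a quotient of $W \otimes \CC[v_i^{\pm}]$.

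The substantive point is \emph{linear independence}: that distinct reduced multicurves remain $\CC[v_i^{\pm}]$-linearly independent in $\Curve$, so that the spanning map $W \otimes \CC[v_i^{\pm}] \to \Curve$ is injective. The cleanest way to get this is to exhibit a $\CC[v_i^{\pm}]$-linear functional, or better a full basis-respecting invariant, that separates the elements of $\RMC$; concretely, one constructs a $\CC[v_i^{\pm}]$-linear map $\Curve \to W \otimes \CC[v_i^{\pm}]$ splitting the surjection, by showing the reduction procedure above is \emph{confluent} — i.e. the resulting reduced form is independent of the order in which relations are applied. Confluence is checked à la diamond lemma: one verifies that all local ambiguities (two crossings/turn-backs that can be resolved in either order, a crossing next to a removable loop, overlapping configurations inside a triangle) resolve to the same reduced element; the relations $1)$--$4)$ were designed precisely so these diagrams match. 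This is the standard route taken for the Kauffman bracket skein algebra and is carried out in \cite{RogerYang14}, so I would cite it rather than reprove it, stating only that the reduction is confluent and hence yields a well-defined $\CC[v_i^{\pm}]$-linear retraction; this forces $\Curve \cong W \otimes \CC[v_i^{\pm}]$.

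Given the isomorphism $\Curve \cong W \otimes \CC[v_i^{\pm}]$, the two normal forms are bookkeeping. For the first: any $\beta \in \Curve$ is a finite $\CC$-linear combination of pure tensors $\alpha \otimes \bv^{\bm}$ with $\alpha \in \RMC$ and $\bm \in \ZZ^C$; grouping by the monomial $\bv^{\bm}$ gives $\beta = \sum_{\bm} \beta_{\bm} \bv^{\bm}$ with each $\beta_{\bm} \in W$ a finite $\CC$-linear combination of reduced multicurves, and this grouping is unique because the $\bv^{\bm}$ are a $\CC$-basis of $\CC[v_i^{\pm}]$ and the $\alpha$ are a $\CC$-basis of $W$. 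For the second: instead group the pure tensors by their $\RMC$-component, yielding $\beta = \sum_{j \in I} f_j(v_i^{\pm}) \alpha_j$ with $f_j \in \CC[v_i^{\pm}]$ and $\alpha_j \in \RMC$ distinct; uniqueness again follows from the tensor decomposition, since a relation $\sum_j f_j \alpha_j = 0$ pushes through the isomorphism to $\sum_j \alpha_j \otimes f_j = 0$ in $W \otimes \CC[v_i^{\pm}]$, forcing every $f_j = 0$ by freeness of $W$ over $\CC$ and flatness (indeed freeness) of $\CC[v_i^{\pm}]$.

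\textbf{Main obstacle.} The only real work is the confluence/linear-independence step: \emph{a priori} a generalized multicurve could reduce to genuinely different $\CC[v_i^{\pm}]$-combinations of reduced multicurves depending on the sequence of skein moves, which would collapse $\Curve$ below $W \otimes \CC[v_i^{\pm}]$. Resolving this requires the diamond-lemma verification that each local ambiguity among relations $1)$--$4)$ — in particular the interaction of the two "puncture" relations $2)$ and $4)$ with the ordinary Kauffman relations $1)$ and $3)$ near a vertex — is resolvable. As noted, this is done in \cite{RogerYang14}, so in practice I would invoke it; were it not available, this diagram-chase would be the crux of the proof.
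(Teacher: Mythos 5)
Your proposal is correct and takes essentially the same route as the paper, which does not reprove this statement either but simply cites Roger--Yang (Remark 2.5, Proposition 2.10); your sketch of spanning by skein reduction plus a confluence-style argument for linear independence is a fair description of what that citation supplies, and the final bookkeeping deducing both normal forms from $\Curve \cong W \otimes \CC[v_i^{\pm}]$ is fine. The only nitpick is the value given by the puncture-loop relation (relation 4 makes it equal to $+2$, not $-2$), which is immaterial since you defer the substantive step to the citation anyway.
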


Although $\Curve$ is in itself interesting from the algebraic point of view, it is its relationship with hyperbolic geometry that is most intriguing.  Indeed, its definition grew out of a study of the decorated Teichm\"uller space of the punctured surface $\Sigma$, and we will describe this relationship in the next section.

\subsection{The Poisson algebra structures of $\Teich$ and $\Curve$}\label{ssec:Poissonalgebra}

Let $C^\infty(\Teich)$ be the space of $\CC$-valued smooth functions on the decorated Teichm\"uller space. Like for the usual Teichm\"uller space, $C^\infty(\Teich)$ admits a Weil-Petersson symplectic structure \cite{Penner92}.  To describe the symplectic form, one can use the \emph{lambda-length functions} $\lambda_{i}$ (or equivalently, the length functions $\ell(\alpha)$) that we introduced in Section \ref{ssec:teichmuller} . 

The Poisson bracket for the lambda-length functions was explicitly computed in \cite{Mondello09}. Although we will not need it in this paper, we include formulas of it here for the sake of completeness.
Fix a triangulation $\cT$ on $\Sigma$. For notational simplicity, assume that two ends of any edge are different vertices. For an edge $\alpha$, let $\ell(\alpha)$ be the normalized length of $\alpha$. For two edges $\alpha, \beta \in E$ which meet at $v$, let $\theta_{v}$ be the generalized angle (equal to the length of the part of the horocycle) from $\alpha$ to $\beta$ in the positive direction, and $\theta_{v}'$ be the generalized angle from $\beta$ to $\alpha$. Then the following bi-vector field
\[
	\Pi_{WP} = \frac{1}{4}\sum_{v \in V}\sum_{\stackrel{\alpha, \beta \in E}{\alpha \cap \beta = v}}\frac{\theta_{v}' - \theta_{v}}{r(v)}\frac{\partial}{\partial \ell(\alpha)}\wedge \frac{\partial}{\partial \ell(\beta)}
\]
defines the Poisson bracket on $C^{\infty}(\Teich)$.

On the other hand, Roger and Yang in \cite{RogerYang14} show that the curve algebra $\Curve$ admits a Poisson structure using a bracket  $\{\, , \, \}$ that generalizes Goldman's construction for loops on a closed surface. The \emph{generalized Goldman bracket} on $\Curve$ is a bilinear map $\{\, , \, \}: \Curve \times \Curve\to \Curve$ satisfying:
\begin{enumerate}
\item For any $v \in V$ and $\beta \in \Curve$, $\{v, \beta\} = 0$;
\item For $\alpha, \beta \in \RMC$, 
\[
	\{\alpha, \beta\} := \frac{1}{2}\sum_{p \in \alpha \cap \beta \cap \Sigma}\left((\alpha\beta)_{p}^{+} - (\alpha\beta)_{p}^{-}\right) + \frac{1}{4}\sum_{v \in \alpha \cap \beta \cap V}\frac{1}{v}\left((\alpha\beta)_{v}^{+} - (\alpha\beta)_{v}^{-}\right),
\]
where $(\alpha\beta)_{x}^{\pm}$ denotes two resolutions (called positive/negative resolutions (Definition \ref{def:PandN})) of $\alpha\beta$ at the point $x$. 
\end{enumerate}

Roger and Yang were able to show that the lambda-length functions satisfy the skein relations of the curve algebra $\Curve$, and moreover, there is a map $\Phi$ which respects the Poisson brackets of $\Curve$ and  $C^\infty(\Teich)$.

\begin{theorem}[\protect{\cite[Theorem 3.4]{RogerYang14}}] \label{thm:Phi}
For any vertex $v$, set
	$\Phi(v) = r(v)$, 
the length of the horocycle around $v$, and for any non-self intersecting arc or loop $\alpha$, set 
$	\Phi(\alpha) = \lambda(\alpha)$, the lambda-length function of $\alpha$.  

Then there exists a well-defined map $\Phi: \Curve \to C^\infty(\Teich)$ which extends linearly the map $\Phi$ before. Furthermore, $\Phi$ is a Poisson algebra homomorphism with respect to the generalized Goldman bracket on $\Curve$ and the Weil-Petersson Poisson bracket on  $C^\infty(\Teich)$. 
\end{theorem}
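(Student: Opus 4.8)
\emph{Strategy.}
The plan is to prove in one stroke that $\Phi$ is well defined \emph{and} multiplicative, and then to promote it to a Poisson map by a Leibniz/locality reduction. For the first part I would not work relation by relation on $\Curve$ directly; instead I would define a map $\tilde\Phi$ on the free $\CC[v_i^{\pm}]$-algebra on \emph{all} generalized multicurves by sending each component (a single immersed loop or arc, possibly self-intersecting) to its lambda-length function---$e^{\ell(\cdot)/2}$ for an arc, and $2\cosh(\ell(\cdot)/2)$ for a loop in the sign convention of \cite{Penner87, RogerYang14}---and each $v_i$ to $r(v_i)$, declaring $\tilde\Phi$ of a union to be the product. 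Then $\tilde\Phi$ is tautologically an algebra homomorphism on the free algebra, and everything reduces to checking that $\tilde\Phi$ annihilates the four defining relations of $\Curve$. Since those relations are local, each check becomes a statement about lambda-lengths of curves differing only inside one disk; once all four hold, $\tilde\Phi$ descends to a well-defined algebra homomorphism $\Phi:\Curve\to C^{\infty}(\Teich)$ agreeing with the stated $\Phi$ on generators.

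\emph{The four local checks.}
Relation 1) asks that $\tilde\Phi$ be compatible with smoothing an interior transverse double point; after lifting the two strands to $\mathbb{H}^{2}$ this is exactly Ptolemy's relation for the resulting ideal quadrilateral when both strands are arc-pieces, and it reduces to the $\mathrm{SL}_{2}(\RR)$ trace identities for loop holonomies (in a suitable choice of lifts) when they are loop-pieces, with mixed configurations handled by the same hyperbolic trigonometry. Relation 2) is the analogue when the double point sits \emph{at} a puncture $v_i$: here the generalized Ptolemy relation of decorated Teichm\"uller theory gives $r(v_i)\cdot\tilde\Phi(\text{crossing})=\tilde\Phi(\text{first smoothing})+\tilde\Phi(\text{second smoothing})$, the factor $r(v_i)$ being precisely the length of the horocyclic segment at $v_i$, which is why $v_i$ was sent to $r(v_i)$. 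Relations 3) and 4) concern a trivial loop and a puncture loop, which must evaluate to the scalars prescribed by those relations; this is a direct computation from the definition of $\lambda$ on null-homotopic and cusp-parallel curves, once the sign and length conventions are pinned down. Multiplicativity of $\Phi$ is then automatic, since $\tilde\Phi$ of a union is a product by construction.

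\emph{Poisson property.}
Both $\{\,,\,\}$ on $\Curve$ and the Weil--Petersson bracket satisfy the Leibniz rule and $\Phi$ is now an algebra homomorphism, so it suffices to verify $\Phi(\{x,y\})=\{\Phi(x),\Phi(y)\}_{WP}$ when $x,y$ range over algebra generators of $\Curve$, namely the $v_i$ and the simple (non-self-intersecting) loops and arcs. For a pair involving some $v_i$ we have $\{v_i,\cdot\}=0$ in $\Curve$, so the point is that each $r(v_i)$ is a Casimir of the Weil--Petersson bracket; this is immediate from Mondello's bivector $\Pi_{WP}$, which is built only from the vector fields $\partial/\partial\ell(e)$ along edges, the $r(v)$'s entering merely as scalar coefficients. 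For a pair $\alpha,\beta$ of simple curves, both $\{\alpha,\beta\}$ and $\{\lambda(\alpha),\lambda(\beta)\}_{WP}$ are local: each is a sum of contributions indexed by the points of $\alpha\cap\beta$, separated into interior points $p\in\Sigma$ and puncture points $v\in V$. It then remains to match, point by point, the interior term $\tfrac12\bigl(\lambda((\alpha\beta)_p^{+})-\lambda((\alpha\beta)_p^{-})\bigr)$ against the classical Wolpert--Goldman formula for the bracket of length functions (rewritten through $\lambda$ for loops, and its Penner--Mondello analogue for arcs), and the puncture term $\tfrac14\,r(v)^{-1}\bigl(\lambda((\alpha\beta)_v^{+})-\lambda((\alpha\beta)_v^{-})\bigr)$ against the corresponding term $\tfrac14\,(\theta_v'-\theta_v)/r(v)$ of $\Pi_{WP}$; the constants $\tfrac12$, $\tfrac14$ and $r(v)^{-1}$ on the algebra side are exactly what make the two sides agree.

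\emph{Main obstacle.}
The steps that are not routine adaptations of the classical Kauffman-bracket/Goldman story are all concentrated \emph{at the punctures}: the generalized Ptolemy identity needed for relation 2), and the matching of the $r(v)^{-1}$-weighted resolution term with the Weil--Petersson bracket at a cusp in the Poisson step. Each requires a careful computation in $\mathbb{H}^{2}$ with two horocyclic segments meeting at an ideal point, together with an honest tracking of signs---the same bookkeeping that makes relations 3) and 4) delicate. Everything else (the interior Ptolemy/trace identities, and the Leibniz and locality reductions) is comparatively mechanical.
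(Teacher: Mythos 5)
First, a point of reference: the paper does not prove this statement at all --- it is quoted verbatim from Roger--Yang (their Theorem 3.4) as background --- so the only meaningful comparison is with the strategy of the original argument, which indeed runs along the lines you sketch: verify the four defining relations for lambda-length functions by decorated hyperbolic geometry (Ptolemy and generalized Ptolemy at a puncture), then check Poisson compatibility on generators against Penner/Mondello's formula. However, your specific reduction contains a genuine gap. You define $\tilde\Phi$ on the free algebra over \emph{all} generalized multicurves by sending every immersed component to the lambda-length of its homotopy class. This cannot descend to $\Curve$. Generalized multicurves are taken up to \emph{regular} homotopy, and relations 1) and 3) together force, at the classical level, a component with a kink to equal $-1$ times the unkinked component: resolving the kink crossing gives $[\mathrm{kinked}] = [\mathrm{plain}] + [\mathrm{plain}]\cdot[\mathrm{trivial\ loop}] = (1-2)[\mathrm{plain}]$. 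Any assignment that depends only on the (free or endpoint-fixed) homotopy class --- in particular lambda-length, or any trace-type function --- gives the kinked and unkinked generators the same value, so your $\tilde\Phi$ does not annihilate the relation ideal. Relatedly, trivial loops and puncture loops have no geodesic representative and hence no lambda-length; their values $-2$ and $2$ are \emph{forced} by relations 3) and 4), so ``a direct computation from the definition of $\lambda$ on null-homotopic and cusp-parallel curves'' is not available. The workable route (and Roger--Yang's) is to define $\Phi$ on the module basis of Proposition \ref{prop:generatorsofcurvealgebra} --- reduced multicurves, sent to products of lambda-lengths of their simple components, and $v_i \mapsto r(v_i)$ --- and to prove multiplicativity by resolving crossings inductively; the local identities you list (interior Ptolemy/trace identities, and $r(v)\,\lambda(\alpha)\lambda(\beta) = \lambda(\gamma^{+}) + \lambda(\gamma^{-})$ at a puncture) are the right inputs, but they must be organized as a consistency argument on the basis, with the Whitney-index sign bookkeeping built in, not as a homotopy-invariant assignment on the free algebra.

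In the Poisson step there is a second, smaller lapse: your reason for $r(v)$ being a Casimir is not valid as stated. The function $r(v)$ is a nonconstant smooth function on $\Teich$ (it is a sum of $h$-lengths of corners, hence a Laurent-type expression in the $\lambda_i$), so the fact that Mondello's bivector is built from the vector fields $\partial/\partial\ell(e)$ with $r(v)$ appearing ``as a coefficient'' does not by itself give $\{r(v), -\}_{WP}=0$; this requires an actual computation (it is true, but needs proof or citation). Finally, the point-by-point matching of the interior and puncture terms of the two brackets, which you defer as routine, is in fact the bulk of the original proof.
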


\section{Proof of Theorem \ref{thm:nonzerodivisorimpliesinjectivityintro}} \label{sec:outline}


The following appeared as Theorem~\ref{thm:nonzerodivisorimpliesinjectivityintro} in the introduction. 

\begin{theorem}\label{thm:nonzerodivisorimpliesinjectivity}
If $\Sigma$ has an ideal triangulation such that no edge of the triangulation is a zero divisor in $\Curve$, then $\Phi$ in \eqref{eqn:phi} is injective.
\end{theorem}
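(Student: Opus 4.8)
The plan is to follow the strategy sketched in the ``Summary of the proof'': realize $\Phi$ as a composition that factors through a localization of $\Curve$ at the multiplicative set generated by the edges of the triangulation, prove that the localized map is an isomorphism, and then use the no-zero-divisor hypothesis to conclude that the localization map itself is injective.

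First I would fix an ideal triangulation $\cT$ with edges $e_1,\dots,e_n$ and let $S \subseteq \Curve$ be the multiplicative set generated by $\{e_1,\dots,e_n\}$ together with the units $\bv^{\bm}$. Form the localization $S^{-1}\Curve$ with structure map $L\colon\Curve\to S^{-1}\Curve$. By Penner's Theorem~\ref{thm:homeomorphism}, the lambda-lengths $\lambda_i=\lambda(e_i)$ are algebraically independent coordinates on $\Teich$, and $\Teich$ is Zariski-dense in $(\CC^*)^n=\spec\CC[\lambda_i^{\pm}]$; hence restriction of functions gives an injection $\CC[\lambda_i^{\pm}]\hookrightarrow C^\infty(\Teich)$. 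Since $\Phi(e_i)=\lambda_i$ is invertible in $C^\infty(\Teich)$ (it is a nowhere-vanishing positive function), $\Phi$ sends $S$ to units, so by the universal property of localization $\Phi$ factors as $\Curve \xrightarrow{L} S^{-1}\Curve \xrightarrow{\Psi} C^\infty(\Teich)$, with image of $\Psi$ landing in $\CC[\lambda_i^{\pm}]$. It therefore suffices to prove (i) that $\Psi\colon S^{-1}\Curve \to \CC[\lambda_i^{\pm}]$ is an isomorphism, and (ii) that $L$ is injective.

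For (i), I would argue that every reduced multicurve class, and hence every generator of $\Curve$, becomes a Laurent polynomial in the $e_i$'s (up to the $v_i^{\pm}$) after inverting the edges: resolving intersections of a multicurve with the triangulation via the skein relations expresses any loop or arc in terms of the edges $e_i$, the punctures $v_i$, and lower-complexity diagrams, so induction on geometric intersection number with $\cT$ shows $S^{-1}\Curve$ is generated by $e_i^{\pm 1}$ and $v_i^{\pm 1}$ as a $\CC$-algebra. On the geometric side, Penner's Ptolemy relations and the computation of lambda-lengths of arbitrary arcs and loops as Laurent polynomials in the $\lambda_i$ show $\Psi$ is surjective onto $\CC[\lambda_i^{\pm}]$; and since $\dim\Teich=n$ with the $\lambda_i$ independent, a dimension/transcendence-degree count (there are no algebraic relations among the $\lambda_i$) forces $\Psi$ to be injective as well, so $\Psi$ is an isomorphism $S^{-1}\Curve\cong\CC[\lambda_i^{\pm}][v_i^{\pm}]$ — or after also observing $\Phi(v_i)=r(v_i)$ must be treated as independent parameters, the appropriate Laurent ring. (The bookkeeping of exactly which Laurent ring appears, and whether the $v_i$ survive, is routine given Proposition~\ref{prop:generatorsofcurvealgebra}.)

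For (ii), the point is the elementary fact that for a commutative ring $R$ and a multiplicative set $S$ generated by finitely many elements $e_1,\dots,e_n$, the kernel of $L\colon R\to S^{-1}R$ is $\{x\in R : sx=0 \text{ for some } s\in S\}$; if no $e_i$ is a zero divisor then no product $s=\bv^{\bm}\prod e_i^{k_i}$ is a zero divisor (the units $\bv^{\bm}$ clearly are not, and a product of non-zero-divisors is a non-zero-divisor), so $\ker L=0$ and $L$ is injective. Combining, $\Phi=\Psi\circ L$ is a composition of injections, hence injective. The main obstacle I anticipate is step (i), specifically proving that $\Psi$ is injective rather than merely surjective: this requires knowing that the only relations in $S^{-1}\Curve$ among the edge-classes are the ones forced by the skein relations, i.e.\ that localizing does not collapse anything unexpectedly, which is where the algebraic independence of the $\lambda_i$ from Penner's theorem, together with a careful transcendence-degree comparison, must do the real work; establishing surjectivity and the kernel computation in (ii) are comparatively formal.
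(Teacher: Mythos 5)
Your overall route is the paper's: localize $\Curve$ at the multiplicative set generated by the edges, factor $\Phi$ through $S^{-1}\Curve$, show the localized map $\Psi$ is an isomorphism onto a Laurent ring, and conclude by noting that if no edge is a zero divisor then $S$ contains no zero divisors, so $L\colon\Curve\to S^{-1}\Curve$ is injective and $\Phi$ is a composition of injections. Your step (ii) is exactly the paper's closing argument, and the factorization set-up is fine.

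The gap is in step (i), precisely at the point you wave off as ``routine bookkeeping.'' The fate of the puncture variables is not bookkeeping; it is the substance of the paper's Lemma~\ref{lem:Laurantpolynomialring}. Applying the puncture-skein relation (relation 2 of Definition~\ref{def:curvealgebra}) to $v\,e_{1}e_{2}$, where $e_{1},e_{2}$ are edges ending at $v$, expresses $v$ as a Laurent polynomial in the edge classes inside $S^{-1}\Curve$, and correspondingly $\Phi(v)=r(v)$ is a Laurent polynomial in the $\lambda_{j}$. So the $v_{i}$ do \emph{not} survive as independent parameters, and your alternative guess $S^{-1}\Curve\cong\CC[\lambda_{i}^{\pm}][v_{i}^{\pm}]$ cannot be correct: if the $v_{i}$ were independent generators, then $v_{i}$ minus its Laurent expression in the edges would be a nonzero element of $\ker\Psi$, so $\Psi$ could not be injective; in $S^{-1}\Curve$ this element is zero, and proving that is exactly the elimination step you are missing. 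Relatedly, your proposed proof that $\Psi$ is injective by a ``dimension/transcendence-degree count'' does not work as stated: at this stage $S^{-1}\Curve$ is not known to be a domain (integrality of $\Curve$ is a \emph{consequence} of injectivity, Theorem~\ref{thm:nozerodivisors}), and a surjection between rings of equal dimension need not be injective. The paper's Lemma~\ref{lem:localmapisom} sidesteps all of this with a retraction: once $S^{-1}\Curve$ is known to be generated by $\{e_{i}^{\pm}\}$ alone --- which is where the elimination of the $v_{i}$ is used --- the homomorphism $\Xi\colon\CC[\lambda_{i}^{\pm}]\to S^{-1}\Curve$, $\lambda_{i}\mapsto e_{i}$, satisfies $\Psi\circ\Xi=\mathrm{id}$ and is surjective, hence both maps are isomorphisms; algebraic independence of the $\lambda_{i}$ enters only in making $\Xi$ well defined on a free Laurent ring. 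Your stated worry that localization might ``collapse something unexpectedly'' is the wrong worry: any unexpected relation among the $e_{i}$ would map under $\Psi$ to a relation among the $\lambda_{i}$ and contradict their independence; the real work is the generation statement. One place where your hedge does touch something genuine: since $\Curve$ contains $v_{i}^{-1}$ and $\Phi(v_{i}^{-1})=1/r(v_{i})$ is in general not a Laurent polynomial in the $\lambda_{j}$, the target of the factorization should strictly be $\CC[\lambda_{i}^{\pm}]$ with the $r(v_{i})$ inverted (still embedded in $C^{\infty}(\Teich)$ by Zariski density); the retraction and the final composition-of-injections argument go through verbatim with that target. With the elimination of the $v_{i}$ and the retraction argument supplied, your outline coincides with the paper's proof.
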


Several lemmas will build up to the proof.  Let $\cT = (V, E, T)$ be a  (not necessarily locally planar) triangulation on $\Sigma$ with edges $\{e_i\}_{1 \leq i \leq n}$.   For any vector $\mathbf{m} = (m_{i})_{1 \le i \le n} \in (\ZZ_{\geq 0})^n$, we define $\be^{\mathbf{m}}$ to be the monomial $ e_1^{m_1} e_2^{m_2} \cdots e_n^{m_n}$.   
 Let $S = \{ \be^\bm \; | \;  \bm \in (\ZZ_{\geq 0})^n \} $ be the set of all monomials with variables in $\{e_{i}\}$. Then $S$ is a multiplicative subset of $\Curve$.  
Thus, we may consider the localization $S^{-1}\Curve$, consisting of formal fractions $\frac{\beta}{\be^\bm}$ where $\beta \in \Curve$ and $\be^\bm \in S$. Let $L$ denote the associated localization map $L : \Curve \to S^{-1}\Curve$.

\begin{lemma}\label{lem:Laurantpolynomialring}
The localization $S^{-1}\Curve$ is generated by the set $\{e_{i}^{\pm}\}$ of edges and their formal inverses. 
\end{lemma}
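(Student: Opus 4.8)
The plan is to show that in $S^{-1}\Curve$ every generalized multicurve can be rewritten as a Laurent polynomial in the edges $e_i$, so that the localization is generated by $\{e_i^{\pm}\}$. Recall from Proposition~\ref{prop:generatorsofcurvealgebra} that $\Curve$ is generated as a $\CC[v_i^{\pm}]$-algebra by reduced multicurves, and that every element is a $\CC[v_i^{\pm}]$-linear combination of elements of $\RMC$. So it suffices to express each puncture variable $v_i$ and each reduced multicurve $\alpha \in \RMC$ as a Laurent polynomial in the $e_i$'s inside $S^{-1}\Curve$.

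First I would handle a single reduced multicurve $\alpha$. Using the triangulation $\cT$, I resolve all intersections of $\alpha$ with the edges of $\cT$ by repeatedly applying skein relations 1), 2), 3), 4) from Definition~\ref{def:curvealgebra}. Each application of relation 1) or 2) writes a diagram with a crossing (an intersection with an edge, or a corner configuration) as a $\CC[v_i^{\pm}]$-linear combination of diagrams with strictly fewer intersections with the triangulation (relation 2) divides by a puncture variable $v_i$, which is invertible, so this is legitimate), while relations 3) and 4) remove trivial and puncture loops at the cost of scalars $\pm 2$. Since the total geometric intersection number of $\alpha$ with $E$ drops at each step, this process terminates, and the terminal diagrams are multicurves disjoint from the interiors of all edges — i.e., multicurves supported on $E$ together with contractible/peripheral loops already eliminated. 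A multicurve supported on the $1$-skeleton is just a union of edges, hence a monomial in the $e_i$. Thus $\alpha$ becomes a $\CC[v_i^{\pm}]$-linear combination of monomials $\be^{\bm}$, i.e., an element of $\CC[v_i^{\pm}][e_i]$ inside $S^{-1}\Curve$. The main subtlety here — and the step I expect to be the real obstacle — is bookkeeping the termination carefully: one must pick a good complexity measure (e.g. the number of intersection points with $\bigcup E$, with ties broken by the number of corner crossings) and check that every skein move used strictly decreases it, including the moves needed to eliminate turn-backs inside triangles and to push loops off the $1$-skeleton.

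Next I would dispose of the puncture variables $v_i$. Applying relation 2) to a diagram where two arc-strands meet at the puncture $v_i$ (for instance, take $\alpha = e_j$ an edge incident to $v_i$, so that near $v_i$ two edge-endpoints meet), we get an identity of the form $v_i \cdot (\text{something supported on } E) = (\text{sum of two diagrams supported on } E)$, which realizes $v_i$ as $\be^{\bm'} (\be^{\bm})^{-1} + \be^{\bm''}(\be^{\bm})^{-1} \in \CC[e_i^{\pm}]$ after localizing; here I need the triangulation to have at least one edge incident to each puncture, which holds since $\cT$ triangulates $\overline{\Sigma}$ with vertex set $V$. (If two ends of $e_j$ coincide at $v_i$ one argues similarly with the two endpoints.) Combining the two paragraphs: $S^{-1}\Curve$ is generated over $\CC$ by $\{e_i^{\pm}\}$, which is the claim. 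Finally, I would remark that this also shows the image of $L$ lies in the subalgebra generated by $\{e_i^{\pm}\}$ and, together with Theorem~\ref{thm:homeomorphism} (no algebraic relations among the $\lambda_i$), sets up the identification of $S^{-1}\Curve$ with $\CC[\lambda_i^{\pm}]$ promised in the introduction.
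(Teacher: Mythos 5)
Your overall route is the same as the paper's: first express multicurves as Laurent polynomials in the edges once the edges are inverted, then recover each puncture variable by applying relation 2) of Definition~\ref{def:curvealgebra} to edge ends meeting at that puncture and dividing by the (invertible) edge monomial. The only substantive difference is that the paper does not reprove the first step: it cites \cite[Lemma 3.18]{RogerYang14}, which says exactly that if $\bm$ records the geometric intersection numbers of $\alpha$ with the edges, then $\be^{\bm}\alpha$ is a polynomial in the $e_i$; your sketched induction on intersection number is the content of that cited lemma, and the ``termination bookkeeping'' you flag as the real obstacle is precisely what it supplies.

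Two local corrections to the write-up. First, you cannot ``resolve the intersections of $\alpha$ with the edges of $\cT$'' inside $\alpha$ itself: relations 1) and 2) apply only to crossings present in the element, and a reduced multicurve $\alpha$ is crossingless --- by Proposition~\ref{prop:generatorsofcurvealgebra} it is a basis element of $\Curve$, and there is no reason for it to lie in the subalgebra generated by the $e_i$ before inverting them. The correct statement concerns the product $\be^{\bm}\alpha$, whose crossings with the added edge strands are what get resolved; only after dividing by $\be^{\bm}$ in $S^{-1}\Curve$ do you obtain $\alpha$ as a Laurent (not plain) polynomial in the edges. Second, in the puncture step relation 2) needs two strand ends at $v_i$, so take two edges $e_1, e_2$ incident to $v_i$ (or one edge with both ends there), and note that the two resolutions of $v_i e_1 e_2$ at the puncture are generalized multicurves, not edge monomials; they become Laurent polynomials in the edges only after feeding them back through the multicurve step, which is how the paper phrases it ($v e_1 e_2$ lies in the subring generated by the $e_i^{\pm}$ by the previous case, hence so does $v$). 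With these repairs your argument coincides with the paper's proof.
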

\begin{proof}
Let $R$ be the subring of $S^{-1}\Curve$ generated by $\{e_{i}^{\pm}\}$. 

First consider the case where $\alpha$ is a generalized multicurve.  Lemma 3.18 of \cite{RogerYang14} says that if $\bm$ is the vector whose $i$-th coordinate is the intersection number of $\alpha$ and the edge $e_{i}$, then the product $ \be^{\bm} \alpha \in \Curve$ can be expressed as a polynomial with variables in  $\{e_{i}\}$.  It follows that $\alpha$, when regarded as an element of  $S^{-1}\Curve$, can be expressed as a polynomial with respect to $\{e_{i}^{\pm}\}$. Therefore $\alpha$ is in $R$. 

Next consider the case of a vertex $v$.  Let $e_{1}, e_{2}$ be two (not necessarily distinct) edges ending at $v$. Then by using the second relation in Definition \ref{def:curvealgebra}, we obtain $v e_{1}e_{2}$ as a linear combination of generalized multicurves.  By the previous case,  we have $v e_1 e_2 \in R$. Since edges $e_1, e_2 \in S$ are invertible, then $v \in R$ as well.
\end{proof}

\begin{remark}\label{rem:clusteralgebra}
Indeed, $\be^{\bm}\alpha$ in the proof of Lemma \ref{lem:Laurantpolynomialring} is a polynomial with \emph{positive} coefficients.  This was proved in \cite[Theorem 3.22]{RogerYang14}, but using lambda-length functions. In proving Theorem \ref{thm:mainthmintro}, we will show that $\Phi$ is injective for a locally planar surface, so the same formula is valid for edge classes. So $\alpha$ is a Laurant polynomial with respect to $\{e_{i}\}$ with positive coefficients. Similarly, by applying the second skein relation in Definition \ref{def:curvealgebra}, we may conclude that each $v_{i}e_{1}e_{2}$ is a positive linear combination of arc classes, so $v_{i}$ is also a Laurent polynomial with respect to $\{e_{i}\}$ with positive coefficients. Thus $\Curve$ is generated by the positive Laurent polynomials with respect to $\{e_{i}\}$.  Following on from our brief discussion in Section~\ref{ssec:Muller}, we've thus showed that $\Curve$ is a subalgebra of the upper cluster algebra $\mathcal{U}_{1}(\Sigma)$ of the cluster algebra $\cA_{1}(\Sigma)$ from $\Teich$, where the definitions of the cluster algebras are as from  \cite{GekhtmanShapiroVainshtein05}.
\end{remark}

Returning to the proof of Theorem~\ref{thm:nonzerodivisorimpliesinjectivity}, recall that $\Phi: \Curve \to C^{\infty}(\Teich)$ denotes the Poisson algebra homomorphism from \cite{RogerYang14}, which we introduced in Section \ref{ssec:Poissonalgebra}. For every edge $e_i$, let us denote $\Phi(e_i) = \lambda(e_i)$  by $\lambda_i$.  

\begin{lemma}\label{lem:factoring}
$\Phi$ factors through $\CC[\lambda_{i}^{\pm}] \subset C^{\infty}(\Teich)$. 
\end{lemma}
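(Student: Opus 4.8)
The plan is to deduce the statement from Lemma~\ref{lem:Laurantpolynomialring} via the universal property of localization. The first step is to check that $\Phi$ carries the multiplicative set $S$ into the group of units $C^{\infty}(\Teich)^{\times}$. For each edge $e_{i}$ we have $\Phi(e_{i}) = \lambda_{i} = \lambda(e_{i})$, which is a strictly positive smooth function on $\Teich$ (clear from the definition of lambda-length in Section~\ref{ssec:teichmuller}, or from the homeomorphism of Theorem~\ref{thm:homeomorphism}); hence $\lambda_{i}$ is invertible in $C^{\infty}(\Teich)$, with inverse the smooth function $1/\lambda_{i}$. Therefore $\Phi(S) \subseteq C^{\infty}(\Teich)^{\times}$.

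Since $\Phi$ is a $\CC$-algebra homomorphism (Theorem~\ref{thm:Phi}) sending $S$ to units, the universal property of the localization map $L : \Curve \to S^{-1}\Curve$ produces a unique $\CC$-algebra homomorphism $\widetilde{\Phi} : S^{-1}\Curve \to C^{\infty}(\Teich)$ with $\widetilde{\Phi}\circ L = \Phi$. By Lemma~\ref{lem:Laurantpolynomialring}, the ring $S^{-1}\Curve$ is generated as a $\CC$-algebra by $\{e_{i}^{\pm}\}$, so $\im(\widetilde{\Phi})$ is the $\CC$-subalgebra of $C^{\infty}(\Teich)$ generated by $\{\widetilde{\Phi}(e_{i}),\widetilde{\Phi}(e_{i})^{-1}\} = \{\lambda_{i},1/\lambda_{i}\}$, which is precisely $\CC[\lambda_{i}^{\pm}]$. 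Consequently $\im(\Phi) = \im(\widetilde{\Phi}\circ L) \subseteq \im(\widetilde{\Phi}) = \CC[\lambda_{i}^{\pm}]$, i.e.\ $\Phi$ factors through the inclusion $\CC[\lambda_{i}^{\pm}] \subset C^{\infty}(\Teich)$. As a byproduct, the induced map $\Psi : S^{-1}\Curve \to \CC[\lambda_{i}^{\pm}]$ is surjective, which is the first half of the isomorphism claim alluded to in Section~\ref{sec:outline}.

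If one prefers to avoid invoking the universal property, the same conclusion can be obtained by hand: given $\beta \in \Curve$, Lemma~\ref{lem:Laurantpolynomialring} shows that $L(\beta)$ is a Laurent polynomial in $\{e_{i}\}$ inside $S^{-1}\Curve$, so after clearing denominators (possibly multiplying by a further monomial in $S$) there is a monomial $\be^{\bm}$ and a polynomial $P$ in the variables $\{e_{i}\}$ with $\be^{\bm}\beta = P$ in $\Curve$; applying the algebra homomorphism $\Phi$ gives $\bigl(\prod_{i}\lambda_{i}^{m_{i}}\bigr)\Phi(\beta) = P(\lambda_{1},\dots,\lambda_{n})$, hence $\Phi(\beta) = \bigl(\prod_{i}\lambda_{i}^{-m_{i}}\bigr)P(\lambda_{1},\dots,\lambda_{n}) \in \CC[\lambda_{i}^{\pm}]$. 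I do not anticipate a genuine obstacle: all the substantive content is already in Lemma~\ref{lem:Laurantpolynomialring} (ultimately \cite[Lemma 3.18]{RogerYang14}), and the only extra verification needed — that $\Phi(S)$ consists of units — is immediate from the positivity of lambda-lengths.
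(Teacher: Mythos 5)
Your proposal is correct and, in substance, the same as the paper's: the ``by hand'' paragraph is exactly the paper's argument (clear denominators via Lemma~\ref{lem:Laurantpolynomialring}, apply $\Phi$, and divide by the unit $\prod_i\lambda_i^{m_i}$), and your universal-property packaging is only a cosmetic variant that the paper itself invokes right after the lemma to define $\Psi$. The one extra verification you flag, that $\Phi(S)$ consists of units because lambda-lengths are positive, is indeed the only point the paper leaves implicit.
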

\begin{proof}
By  Lemma \ref{lem:Laurantpolynomialring}, for  any  linear combination of generalized curves and vertex classes $\beta \in \Curve$, there is some $\be^{\bm}  \in S$ such that $\be^{\bm} \beta$ is a polynomial with respect to $\{e_{i}\}$. Therefore $\Phi(\be^\bm \beta)  \in \CC[\lambda_{i}^{\pm}]$.  But $\Phi(\be^\bm \beta) = \Phi(\be^\bm) \Phi(\beta)$, and  $\Phi(\be^\bm) \in \CC[\lambda_{i}^{\pm}]$ too.  Thus $\Phi(\beta) \in \CC[\lambda_{i}^{\pm}]$, as desired.
\end{proof}

Notice that $\Phi$ maps every edge $e_i$ to a unit in $\CC[\lambda_{i}^{\pm}]$.   Hence $\Phi$ also maps every element of $S$ to a unit in $\CC[\lambda_{i}^{\pm}]$.  By the Universal Property of Localization,  there is a unique homomorphism $\Psi: S^{-1} \Curve \to \CC[\lambda_{i}^{\pm}]$ so that the following diagram commutes:   

\begin{equation}\label{eqn:diagram}
\xymatrix{\Curve \ar[r] \ar[d]_{L} \ar[rd]^{\Phi} 
& C^{\infty}(\Teich)\\
S^{-1}\Curve \ar[r]^{\Psi} &\CC[\lambda_{i}^{\pm}] 
\ar@{^{(}->}[u]_{id}
}
\end{equation}

\begin{lemma}\label{lem:localmapisom}
The localized map $\Psi$ is an isomorphism. 
\end{lemma}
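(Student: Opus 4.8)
The plan is to show $\Psi : S^{-1}\Curve \to \CC[\lambda_i^{\pm}]$ is surjective and injective separately. Surjectivity is essentially immediate: by Lemma \ref{lem:Laurantpolynomialring} the ring $S^{-1}\Curve$ is generated by $\{e_i^{\pm}\}$, and $\Psi$ sends $e_i^{\pm}$ to $\lambda_i^{\pm}$, which generate $\CC[\lambda_i^{\pm}]$; so the image of $\Psi$ contains a generating set and hence is everything.

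For injectivity I would argue as follows. Take $x \in S^{-1}\Curve$ with $\Psi(x) = 0$. Writing $x = \frac{\beta}{\be^{\bm}}$ with $\beta \in \Curve$ and $\be^{\bm} \in S$, and using that $\be^{\bm}$ is a unit in $S^{-1}\Curve$, it suffices to show that $L(\beta) = 0$ whenever $\Phi(\beta) = 0$ — but actually we want more, namely that $\Psi$ has trivial kernel, and the cleanest route is to build an explicit inverse. Using Lemma \ref{lem:Laurantpolynomialring} again, define a candidate inverse $\Theta : \CC[\lambda_i^{\pm}] \to S^{-1}\Curve$ on generators by $\Theta(\lambda_i^{\pm}) = e_i^{\pm}$. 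Since by Theorem~\ref{thm:homeomorphism} (and the remark following it) there are no algebraic relations among the $\lambda_i$, the ring $\CC[\lambda_i^{\pm}]$ is a genuine Laurent polynomial ring on $n$ free generators, so $\Theta$ extends (uniquely) to a well-defined $\CC$-algebra homomorphism. Then $\Psi \circ \Theta = \mathrm{id}$ on the generators $\lambda_i^{\pm}$, hence on all of $\CC[\lambda_i^{\pm}]$, and $\Theta \circ \Psi = \mathrm{id}$ on the generators $e_i^{\pm}$ of $S^{-1}\Curve$, hence on all of $S^{-1}\Curve$ by Lemma \ref{lem:Laurantpolynomialring}. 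Therefore $\Psi$ is an isomorphism with inverse $\Theta$.

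The one genuine point that needs care — and the step I expect to be the main obstacle — is the well-definedness of $\Theta$, i.e.\ verifying that the assignment $\lambda_i \mapsto e_i$ respects all relations. This reduces precisely to the statement, recorded after Theorem~\ref{thm:homeomorphism}, that the lambda-lengths $\lambda_1, \dots, \lambda_n$ of the edges of a triangulation are algebraically independent (equivalently, that $\lambda : \Teich \to \RR_{>0}^n$ is a homeomorphism onto an open subset of $(\CC^*)^n$, so its image is Zariski dense): this guarantees the subalgebra $\CC[\lambda_i^{\pm}] \subseteq C^\infty(\Teich)$ really is a polynomial ring in $n$ free variables, which is what makes $\Theta$ a bona fide homomorphism rather than just a map on a spanning set. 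Granting that, the rest is the formal verification that two homomorphisms agreeing on generating sets are mutually inverse, which is routine.

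Note that the isomorphism $\Psi$ obtained this way is independent of whether any edge is a zero divisor — that hypothesis is only used later, in conjunction with this lemma, to deduce that $L$ is injective and hence (via the commuting diagram \eqref{eqn:diagram}) that $\Phi$ itself is injective.
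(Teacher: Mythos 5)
Your proposal is correct and follows essentially the same route as the paper: the paper's proof also constructs the reverse map $\Xi:\CC[\lambda_i^{\pm}]\to S^{-1}\Curve$, $\lambda_i\mapsto e_i$, uses Lemma~\ref{lem:Laurantpolynomialring} for surjectivity, and the relation $\Psi\circ\Xi=\mathrm{id}$ to conclude bijectivity. The only difference is cosmetic: you verify both compositions and spell out that well-definedness of the inverse rests on the algebraic independence of the $\lambda_i$ (noted after Theorem~\ref{thm:homeomorphism}), a point the paper leaves implicit.
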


\begin{proof}
Consider the map $\Xi : \CC[\lambda_{i}^{\pm}] \to S^{-1}\Curve$ which sends $\lambda_{i}$ to $e_{i}$.  Then $\Xi$ is an algebra homomorphism such that $\Psi \circ \Xi = \mathrm{id}_{\CC[\lambda_{i}^{\pm}]}$, implying that $\Xi$ is injective. By Lemma \ref{lem:Laurantpolynomialring}, any generator of $S^{-1}\Curve$ can be written as a Laurent polynomial with respect to $\{e_{i}\}$. Thus $\Xi$ is surjective. Hence $\Xi$ and $\Psi$ are bijective. 
\end{proof}

The proof now falls easily from the previous lemmas. 
\begin{proof}[Proof of Theorem~\ref{thm:nonzerodivisorimpliesinjectivity}]
If no $e_i$ is a zero divisor, then $S$ will not contain any zero divisors.  It follows that the localization map $L : \Curve \to S^{-1}\Curve$ is injective. Indeed, suppose that $L(\alpha) = L(\beta)$. By the definition of localization, there is $\be^\bm \in S$ such that $\be^\bm(\alpha - \beta) = 0$. But because $\be^\bm$ is not a zero-divisor, $\alpha = \beta$. By Lemma \ref{lem:localmapisom}, $\Phi$ is injective because it is a composition of injective morphisms. 
\end{proof}


\section{Locally planar surfaces} \label{sec:locallyplanar}

\begin{definition}
A \emph{locally planar triangulation} of $\Sigma$ is a triangulation  $\cT = (V, E, T)$ of  $\overline{\Sigma} $, where $\Sigma = \overline{\Sigma} \setminus V$ such that
\begin{enumerate}
\item there is no one-cycle or two-cycle of edges in $\cT$;
\item $\Sigma$ is not the three-punctured sphere.
\end{enumerate}
In that case, we also say that a surface $\Sigma$ is \emph{locally planar}.
\end{definition}

One simple class of examples of non-locally planar surfaces is one or two punctured surfaces. On the other hand, the four-punctured sphere, which has a tetrahedral triangulation, is locally planar.  It is evident that any triangulation can be refined to a locally planar one by introducing more vertices.

Locally planar triangulations $T$ satisfy the following properties:

\begin{enumerate}
\item There is no self-folded triangle in $T$. 
\item For any $v \in V$, the star $\Star(v) := \bigcup_{v \in \Delta \in T}\Delta$ has at least three triangles. 
\item For any edge $e$, the relative interior of the star $\Star(e) := \bigcup_{e \cap \Delta \ne \emptyset}\Delta$ (see Figure \ref{fig:localpicture}) is contractible. No two triangles can be identified because otherwise there must be a two-cycle connecting $v$ and $w$. Also any internal edge cannot be identified with a boundary edge. Thus the map
$
	\mathrm{int}\; \Star(e) \to \overline{\Sigma}
$
is a continuous embedding.
\end{enumerate}
 However, note that it is possible that two boundary edges of $\partial \Star(e)$ are identified in a locally planar triangulation.

\section{Outline of the proof of Theorem \ref{thm:mainthmintro}} \label{sec:outlinemainthm}

For the convenience of the reader, we here give an outline and purpose of each of Sections \ref{sec:generalizedcornercoord}--\ref{sec:nonzerodivisor} in proving the next main result of this paper.    

\begin{theorem}\label{thm:locallyplanarzerodivisor} 
If $\Sigma$ is locally planar, then no edge of a locally planar triangulation is a zero divisor in $\Curve$. In particular, $\Phi$ in \eqref{eqn:phi} is injective. 
\end{theorem}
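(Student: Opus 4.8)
The plan is to show that for any edge $e$ in a locally planar triangulation $\cT$ and any nonzero $\beta \in \Curve$, the product $e\beta$ is nonzero; by Theorem~\ref{thm:nonzerodivisorimpliesinjectivity} this gives injectivity of $\Phi$. Using Proposition~\ref{prop:generatorsofcurvealgebra}, write $\beta = \sum_{j \in I} f_j(v_i^{\pm}) \alpha_j$ with $\alpha_j \in \RMC$ distinct and $f_j \neq 0$. Multiplying by $e$ and resolving all crossings with the first skein relation (and the puncture-skein relation at vertices where $e$ and $\alpha_j$ meet), each $e\alpha_j$ becomes a $\CC[v_i^{\pm}]$-linear combination of reduced multicurves. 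The goal is to exhibit one reduced multicurve $\gamma$ appearing in the expansion of $e\beta$ whose total coefficient is provably nonzero, i.e.\ to find a ``leading term'' that cannot be cancelled.

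First I would set up, in Section~\ref{sec:generalizedcornercoord}, the generalized corner coordinates for normal arcs: a normal multicurve $\alpha$ gets a vector of nonnegative half-integers indexed by the corners $C$ of $\cT$ (with the value $-\tfrac12$ recorded for an arc terminating at a puncture), and these satisfy the usual edge-matching equations. This gives a combinatorial bookkeeping device — the coordinate vector $\type(\alpha) \in (\tfrac12\ZZ)^C$ — on which to base an order on $\RMC$. Next, in the following sections, I would analyze the two turn-back-free resolutions of $e\alpha$, the \emph{positive} resolution $P_e(\alpha)$ and \emph{negative} resolution $N_e(\alpha)$ (Definition~\ref{def:PandN}), and give explicit formulas for them in the locally planar case; local planarity is what makes $\mathrm{int}\,\Star(e)$ an embedded disk (property (3) of Section~\ref{sec:locallyplanar}), so the resolution can be computed triangle-by-triangle without the unexpected global simplifications that occur otherwise. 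The key structural facts to extract are: how $\type(P_e(\alpha))$ and $\type(N_e(\alpha))$ depend on $\type(\alpha)$, and a precise catalogue of the coincidences $P_e(\alpha_i) = N_e(\alpha_j)$ for $\alpha_i \neq \alpha_j$.

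Then, in Section~\ref{sec:nonzerodivisor}, I would define a partial order $\succ$ on $\RMC$ (roughly: refine the dominance order on corner-coordinate vectors $\type(\alpha)$, plus auxiliary tie-breakers — e.g.\ counting arc components or intersection numbers with a fixed auxiliary curve) so that $P_e$ is order-preserving and $P_e(\alpha)$ is a $\succ$-maximal reduced multicurve among all resolutions of $e\alpha$. Among the finitely many $\alpha_j$ with $f_j \neq 0$, choose $\alpha_0$ to be $\succ$-maximal (if there are several maximal ones, the tie-breaking conditions from the coincidence catalogue pick a distinguished one). The claim is that $P_e(\alpha_0)$ occurs in $e\beta$ with nonzero coefficient: any other $e\alpha_j$ can contribute to $P_e(\alpha_0)$ only through its own positive or negative resolution, and the order-preservation plus the explicit list of $P_e(\alpha_i) = N_e(\alpha_j)$ coincidences force that either $\alpha_j \prec \alpha_0$ (so its resolutions are strictly smaller and cannot reach $P_e(\alpha_0)$), or the coincidence is governed by a monomial-degree mismatch in the $v_i$'s so that the contributions land in different $\bv^{\bm}$-graded pieces and cannot cancel. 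Hence the $\bv^{\bm}$-component of $e\beta$ corresponding to $P_e(\alpha_0)$ is nonzero, so $e\beta \neq 0$.

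The main obstacle — as the authors themselves flag in Remark~\ref{rem:algviewpoint} — is precisely the coincidence $P_e(\alpha_i) = N_e(\alpha_j)$ with $\alpha_i \neq \alpha_j$ of equal order, which defeats the naive ``total order + order-preserving resolution'' scheme and forces the use of a partial order with extra tie-breaking. Controlling these coincidences requires the fine, case-by-case description of the positive and negative resolutions in the locally planar setting, and I expect the bulk of the work (Sections~\ref{sec:generalizedcornercoord}--\ref{sec:nonzerodivisor}) to go into: (i) proving the corner-coordinate formalism is well-defined and characterizes reduced multicurves, (ii) deriving the explicit local formulas for $P_e$ and $N_e$, and (iii) verifying the cancellation-exclusion argument for every configuration in which a coincidence can occur. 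The three-puncture-sphere exclusion and the ban on self-folded triangles/edges are exactly the hypotheses needed to keep the local picture around $e$ an embedded disk so that these case analyses remain finite and tractable.
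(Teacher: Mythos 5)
Your proposal follows essentially the same route as the paper: generalized corner coordinates for normal arcs, explicit local formulas for the positive and negative resolutions $P_e$, $N_e$ over $\Star(e)$ (where local planarity keeps the picture an embedded disk), a leading-term argument via an order on $\RMC$ that is only partial and needs tie-breakers precisely because of the coincidences $P_e(\alpha_i)=N_e(\alpha_j)$, and a final reduction from $\CC[v_i^{\pm}]$-coefficients to $\CC$-coefficients using the $\bv^{\bm}$-grading of Proposition~\ref{prop:generatorsofcurvealgebra}. This matches the paper's proof (edge degree $\deg_e$, the projection $\pi(\alpha)=(\alpha(a_s),\alpha(b_t))$ with lexicographic order, the splitting by $\RMC^j$ and by triangle type, and the $b_{t-1}$-coordinate tie-break of Lemma~\ref{lem:extremaldegchange}), with your plan leaving the same case analyses to be carried out.
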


Before proving Theorem~\ref{thm:locallyplanarzerodivisor}, in Section~\ref{sec:generalizedcornercoord} we introduce an extension of the theory of normal curves on surfaces, as in Chapter~3 of \cite{Matveev07}.  In the usual theory, normal loops on a surface correspond to integer-valued corner coordinates that satisfy a matching condition at each edge.   In the extended theory, normal multicurves (recall Definition \ref{def:RMCRML}) now correspond to integer- and half-integer-valued \emph{generalized corner coordinates} that also satisfy the same matching conditions at each edge, and some other obvious conditions. 

Let $e$ be an edge of a locally planar triangulation of $\Sigma$,  and $\gamma \in \Curve$.   We here outline a proof that $e \gamma \neq 0$.  We refer the reader to the appropriate sections, as mentioned below, for formal definitions and detailed proofs.

\begin{enumerate}
\item Since the set of reduced multicurves $\RMC$ generates $\Curve$, in Section~\ref{sec:resolutions} we focus first on the resolutions  of $e \cup \alpha$ where $\alpha \in \RMC$.  We pick out the two, the \emph{positive resolution} $P_{e}(\alpha)$ and the \emph{negative resolution} $N_{e}(\alpha)$, where all the intersections are resolved in the same direction.  We give explicit formulas for their generalized corner coordinates.  

\item In Section~\ref{sec:injectivity} we define an ordering on $\RMC$, which we call \emph{edge degree} $\deg_{e}$.  Given a finite set of reduced multicurves, we may now determine which are leading terms with respect to $\deg_{e}$, i.e. which have the highest degree.     
We apply this to the case when $\alpha \in \RMC$, and $e \alpha$ is a finite linear combination of resolutions.   Lemma~\ref{lem:PandNareleadingterms} shows that,  with respect to  $\deg_{e}$,  the resolutions $P_{e}(\alpha)$ and $N_{e}(\alpha)$ are the only possible leading terms of $e \alpha$. 

\item Next, we consider $\CC$-linear combinations of reduced multicurves, say some non-zero $\beta = \sum_{k \in I} c_{k} \alpha_{k}$ and $c_{k} \in \CC$.  Then $e \beta = \sum_{k \in I} c_{k} (e\alpha_{k}) $ is a $\CC[v_{i}^{\pm}]$-linear combination of resolutions of all the $e \alpha_{k}$.  We wish to understand when resolutions of the $e \alpha_{k}$ could cancel each other out in $e \beta$.  

\begin{enumerate}
\item Proposition \ref{prop:Pisinjective} shows that for $\alpha \in \RMC$, the maps $\alpha \mapsto P_{e}(\alpha)$ and $\alpha \mapsto N_{e}(\alpha)$ are injective.  In  other words, $P_{e}(\alpha_{i}) \neq P_{e} (\alpha_{j})$ and  $N_{e}(\alpha_{i}) \neq N_{e} (\alpha_{j})$ for $\alpha_{i} \neq \alpha_{j} \in \RMC$.  So a positive resolution cannot cancel out with another positive resolution in $e \beta$, and similarly for the negative resolutions.   

\item In Section \ref{sec:nonzerodivisor}, we encounter examples where $P_{e}(\alpha_{i}) = N_{e} (\alpha_{j})$ for $\alpha_{i} \neq \alpha_{j} \in \RMC$, as illustrated in Figures~\ref{fig:cancellationRMC1}  and \ref{fig:Pai=Naj}.   Thus there can be situations where cancellations between resolutions of different components occur in $e \beta$.
\end{enumerate}

\item Proposition~\ref{prop:multiplyingeisinjective} proves $ e \beta \neq 0$ for  $\beta = \sum_{k \in I} c_{k} \alpha_{k}$ and $c_{k} \in \CC$.  The proof gives instructions on how to  identify $ \alpha_{j}$ so that $P_{e}(\alpha_{j}) \neq N_e(\alpha_{k})$ for all $j \neq l \in I$.   So $P_{e}(\alpha_{j})$ is a resolution that does not cancel with any other resolutions in $e \beta$, and hence it is a non-zero leading term of $e \beta$.  

\item Proposition~\ref{prop:deltanozerodivisor} finishes with the most general case, for $\gamma = \sum_{k \in I}f_{k}(v_{i}^{\pm})\alpha_{k}$ with $f_{k}(v_{i}^{\pm}) \in \CC[v_{i}^{\pm}]$.  The result follows from Proposition~\ref{prop:multiplyingeisinjective} and a short algebraic argument based on Proposition~\ref{prop:generatorsofcurvealgebra}.  
\end{enumerate}

We remark that, with the exception of Proposition~\ref{prop:deltanozerodivisor}, all the statements mentioned in the outline above require the locally planarity assumption.  In particular, our analysis relies on having exact formulas for the generalized corner coordinates of the positive and negative resolutions.


\section{Generalized corner coordinates}\label{sec:generalizedcornercoord}

Let $\Sigma = \overline{\Sigma} \setminus V$ be a punctured surface with a triangulation $\cT = (V, E, T)$. The set of corners $C$ consists of  pairs $(v, \Delta)$ where $\Delta \in T$ and $v \in V$ is a vertex of $\Delta$.  Recall Definition~\ref{def:RMCRML} of a normal and reduced multicurves. In this section, we describe how to  uniquely describe them using a tuple of numbers that encodes essential geometric information.  

\subsection{Edge and corner coordinates}

The well-known theory of normal curves on surfaces applies when our normal multicurve $\alpha$ has no arc components (see for example \cite[Section 3.2]{Matveev07}).  In that case, there are two equivalent ways to describe a normal multiloop as a tuple of integers.   One way is to record the intersection numbers of $\alpha$ with the edges.  For any normal multiloop $\alpha$ and edge $e$, let $\alpha(e)$ be the minimal number of transversal intersections of $\alpha$ with $e$.   Then the \emph{edge coordinates} of $\alpha$ are $\phi_E(\alpha) =  (\alpha(e))_{e \in E}$. 

Another way to coordinatize is to use the corners. For any normal multiloop $\alpha$ and corner $c = (v, \Delta)$, let $\alpha(c)$ be the number of components of $\alpha \cap \Delta$ that connect one edge adjacent to $v$ to the other edge adjacent to $v$.  Then the \emph{corner coordinates}  are  $\phi_C(\alpha) = (\alpha(c))_{c \in C}$.  

Clearly, the two coordinates maps $\phi_E(\alpha)$ and $\phi_C(\alpha)$ are related.   Let $c_0, c_1, c_2$ be the three corners of a triangle $\Delta$, and let $e_{i}$ be the edge opposite to $c_i$.  Then for $i$ taken modulo 3, 
\begin{eqnarray}
\alpha(e_{i}) &=& \alpha(c_{i+1}) + \alpha(c_{i+2}) \label{eqn:edgefromcorners}\\ 
\alpha(c_{i}) &=& \frac{1}{2}\left(\alpha(e_{i+1}) + \alpha(e_{i+2}) - \alpha(e_{i}\right)). \label{eqn:cornerfromedges}
\end{eqnarray}
From now on, we will use corner coordinates exclusively, as they are more suitable for arcs. 

To characterize the tuples of $\ZZ^C$ which are corner coordinates of some multiloop, we have the following definition. For any corner $c \in C$, we denote the $c$-th coordinate of some  $\mathbf{w} \in \ZZ^{C}$ by $\mathbf{w}(c)$.
\begin{definition} \label{def:matchingcondition}
Let $e$ be the common edge of two triangles $\Delta_{1}$ and $\Delta_{2}$.  Let $c_{1}$ and  $d_{1}$ be the corners of $\Delta_{1}$ adjacent to $e$, and  $c_{2}$ and $d_{2}$ be the corners of $\Delta_{2}$ adjacent to $e$.  Then a tuple $\mathbf{w} \in \ZZ^{C}$  satisfies the \emph{matching condition at $e$} if 
\[	\mathbf{w}(c_{1})+\mathbf{w}(d_{1}) = \mathbf{w}(c_{2}) + \mathbf{w}(d_{2}). \]
\end{definition}
A vector $\mathbf{w} \in \ZZ^{C}$ is the corner coordinates of some $\alpha \in \NML$ if and only if $\mathbf{w}$ has all non-negative coordinates and satisfies the matching condition at every edge.

\subsection{Generalized corner coordinates}

We now extend the corner coordinate map $\phi_C$ to normal multicurves, which include both arcs and loops.   Any normal multicurve $\alpha$ can intersect a triangle $\Delta \in \cT$ in one of three ways, as illustrated in Figure \ref{fig:basictriangles}. 
\begin{figure}[!ht]
    \begin{subfigure}[b]{0.15\textwidth}
        \includegraphics[width=.9\textwidth]{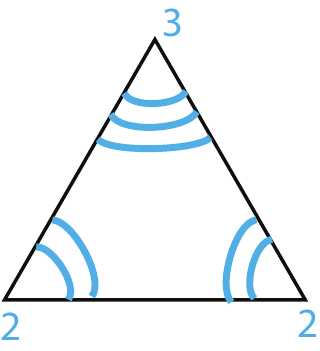}
        \caption{type I}
        \label{fig:typeI}
    \end{subfigure}
    \qquad \qquad
    \begin{subfigure}[b]{0.15\textwidth}
        \includegraphics[width=.9\textwidth]{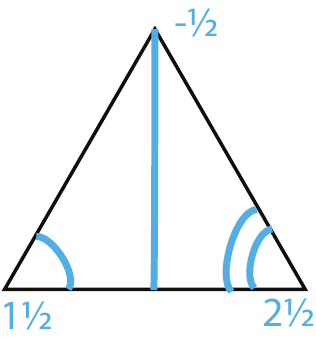}
        \caption{type II}
        \label{fig:typeII}
    \end{subfigure}
    \qquad \qquad 
    \begin{subfigure}[b]{0.15\textwidth}
        \includegraphics[width=.9\textwidth]{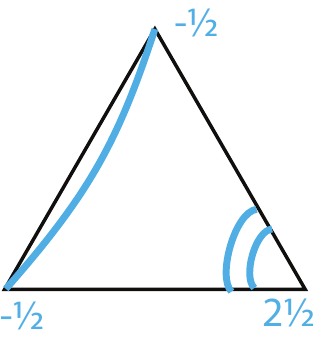}
        \caption{type III}
        \label{fig:typeIII}
    \end{subfigure}
    \caption{A normal multicurve $\alpha$ can intersect a triangle in one of three ways.  The corner coordinates of $\alpha$ are half-integers.}\label{fig:basictriangles}
\end{figure}

We say $\alpha$ in $\Delta$ is 
\begin{itemize}
\item type I,  if there is no component of $\alpha \cap \Delta$ is an arc connecting to a vertex of $\Delta$
\item type II,  if some component of $\alpha \cap \Delta $ is an arc from an edge to a vertex of $\Delta$
\item type III, if some component of $\alpha \cap \Delta $ is an arc from a vertex to another vertex of $\Delta$.  
\end{itemize}

Notice that in the case of type III, $\alpha$ contains an edge of the triangulation as a component.   We will also use the notation $\type_{\alpha}(c) \in \{ \mathrm{I}, \mathrm{II}, \mathrm{III}\} $ to describe the type of the triangle that contains the corner $c$ for $\alpha$.

\begin{definition}\label{def:cornercoordinatemap}
For each $\alpha \in \NMC$ and corner $c = (v, \Delta) \in C$, the \emph{generalized corner coordinate} $\alpha(c)$ is computed as follows.  Let  $a$ be the number of components of $\alpha \cap \Delta$ that connect the edges adjacent to $v$.  
\begin{itemize}
\item If $\type_{\alpha}(c)=\mathrm{I}$,  \, $\alpha(c) = a$. 
\item If $\type_{\alpha}(c)=\mathrm{II}$,  \,  $\alpha(c) = -\frac{1}{2}$ if there is an arc connecting $v$ and the opposite edge, and otherwise  $\alpha(c) = a+\frac{1}{2}$. 
\item If $\type_{\alpha}(c) = \mathrm{III}$, \,   $\alpha(c) = -\frac{1}{2}$ if $v$ is one end of $e$, and otherwise $\alpha(c) = a+\frac{1}{2}$.
\end{itemize}
The \emph{generalized corner coordinate} map is $\phi : \NMC \to \left(\frac{1}{2}\ZZ\right)^{C}$ such that $\phi(\alpha) = (\alpha(c))_{c \in C}$. 
\end{definition}

See Figure~\ref{fig:basictriangles} for some examples. It is evident that $\phi$ is well-defined and generalizes the corner coordinate map $\phi_C$ for normal multiloops.  For example, all of the generalized corner coordinates satisfy the matching condition  of  Definition \ref{def:matchingcondition}. Furthermore, two distinct normal multicurves must be assigned distinct generalized corner coordinates, so that the map $\phi$ is injective.   
We leave the proof of the following lemma as an exercise.  

\begin{lemma}\label{lem:matchingcondition}
For any $\alpha \in \NMC$, its generalized corner coordinates $\phi(\alpha) = (\alpha(c))_{c \in C}$ satisfy the following four conditions:
\begin{enumerate}
\item $\alpha(c) \ge -\frac{1}{2}$ for every $c \in C$, 
\item  \label{lem:matchingcondition2} either all three $\alpha(c_0), \alpha(c_1), \alpha(c_2) \in \ZZ$, or  all three $\alpha(c_0),\alpha(c_1), \alpha(c_2) \in \frac{1}{2}\ZZ$ whenever $c_0, c_1, c_2$ are the corners of a triangle,

\item $\alpha(c_i) \geq 0$ for at least one $i = 0, 1, 2$, whenever $c_0, c_1, c_2$ are the corners of a triangle,

\item $\phi(\alpha)$ satisfies the matching condition at each edge $e \in E$. 
\end{enumerate}
Conversely, if $\mathbf{w} \in \left(\frac{1}{2}\ZZ\right)^{C}$ satisfies (1)--(4) above, then there exists $\alpha \in \NMC$ such that $\phi(\alpha) = \mathbf{w}$. 
\end{lemma}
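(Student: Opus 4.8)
The plan is to prove the two directions separately. The forward direction (conditions (1)--(4) hold for any $\alpha \in \NMC$) is the ``exercise'' the authors leave to the reader, and I would dispatch it by a local, triangle-by-triangle verification: conditions (1)--(3) are immediate from Definition~\ref{def:cornercoordinatemap} by inspecting the three triangle types, and condition (4) follows because at a shared edge $e$ the normal multicurve $\alpha$ meets $e$ in a fixed finite set of transversal points (plus possibly the two endpoints of $e$, handled uniformly by the $-\tfrac12$ convention), and both sides $\mathbf{w}(c_1)+\mathbf{w}(d_1)$ and $\mathbf{w}(c_2)+\mathbf{w}(d_2)$ compute the same intersection count with $e$ via Equation~\eqref{eqn:edgefromcorners} suitably corrected by half-integers. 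So the real content is the converse: from a vector $\mathbf{w} \in (\tfrac12\ZZ)^C$ satisfying (1)--(4), reconstruct a normal multicurve $\alpha$ with $\phi(\alpha) = \mathbf{w}$.

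For the converse, I would proceed triangle by triangle and then glue. First I would use condition (2) to split the triangles into two classes: the ``integral'' triangles (all three corner values in $\ZZ$), which by the classical theory of Matveev house only arc-free pieces, and the ``half-integral'' triangles, which must contain an arc reaching a vertex. For a half-integral triangle $\Delta$ with corners $c_0, c_1, c_2$, conditions (1) and (3) force the possible value patterns: exactly one of the three is $-\tfrac12$ (type II or III at that corner), or none is $-\tfrac12$ in which case all three are of the form $a_i + \tfrac12$ (this is the ``middle'' type-II configuration, where $\alpha$ has an arc from an edge to the opposite-looking vertex but no arc actually hits a vertex of $\Delta$ — need to recheck Definition~\ref{def:cornercoordinatemap} against this, but the case analysis is bounded). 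In each case I would explicitly draw the normal picture inside $\Delta$: the integer part $a$ at each corner tells you how many corner-connecting strands to draw, and the half-integer offset tells you whether and how a vertex-arc is inserted. Then I would check that the edge-endpoint data produced on the three sides of $\Delta$ are determined by $\mathbf{w}$ alone (via the matching-condition bookkeeping), so that adjacent triangles produce compatible strand-endpoints on their shared edge, and the pieces glue into a global normal multicurve. Finally, I would verify the glued object is genuinely normal (embedded, no turnbacks) — embeddedness is automatic from the local pictures being embedded and the gluing matching endpoints, and the ``no turnback'' condition is exactly what choosing the $a_i$ corner-connecting strands (rather than turnback strands) encodes.

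The main obstacle I expect is the bookkeeping at the vertices rather than at the edges. At an edge the matching condition (4) is tailor-made to make the two local pictures agree. But at a vertex $v$, the condition that ``at most one arc connects $v$'' (part of being reduced/normal per Definition~\ref{def:RMCRML}) is a \emph{global} constraint around the whole star of $v$, and it is not obviously implied by the purely local conditions (1)--(4). I would need to argue that if two different triangles in $\Star(v)$ each wanted to send an arc into $v$, the half-integer values $-\tfrac12$ appearing at the corners at $v$, combined with the matching conditions along the edges of $\Star(v)$, would be contradictory — i.e. that (1)--(4) already encode ``$-\tfrac12$ appears at corners at $v$ for either zero or all-but-consistently-one-arc's-worth of the corners at $v$.'' Pinning down this vertex-consistency statement and showing it follows from (1)--(4) is the crux; the edge-gluing and the local triangle pictures are routine once it is in hand. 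An alternative, cleaner route to sidestep this is to prove the converse by exhibiting $\alpha$ as a disjoint union: first extract the arc components (each arc component is detected by a connected chain of $-\tfrac12$'s across triangles, starting and ending at vertices), subtract their contribution from $\mathbf{w}$ to get an honest non-negative integer vector satisfying the classical matching condition, invoke Matveev's theorem for the remaining multiloop, and check the union is normal — this isolates exactly the vertex issue into the ``arc extraction'' step, which I think is the right place to put the work.
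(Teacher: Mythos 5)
The paper does not actually contain a proof of this lemma (it is explicitly ``left as an exercise''), so there is no argument of the authors to compare yours against; judged on its own terms, your forward direction is fine, but the converse has two concrete problems. First, your triangle-by-triangle dictionary is wrong in the fractional case. By Definition~\ref{def:cornercoordinatemap}, a triangle with non-integer corner values is of type II or III, and such a triangle of a genuine normal multicurve \emph{always} has a corner equal to $-\tfrac12$: the corner at the vertex where an arc ends in type II, and the \emph{two} corners at the ends of the edge component in type III (a two-$(-\tfrac12)$ pattern missing from your case list). The ``middle type-II configuration'' you posit, with all three values of the form $a_i+\tfrac12$ and no $-\tfrac12$, corresponds to no normal local picture at all, so your reconstruction has nothing to draw there. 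This is not a vacuous worry: such patterns do pass (1)--(4). For instance, on the tetrahedral triangulation of the four-punctured sphere the constant vector $\mathbf{w}\equiv\tfrac12$ satisfies (1)--(4) (every edge sum is $1$ on both sides), yet it is realized by no normal multicurve, precisely because every fractional triangle of a normal multicurve carries a $-\tfrac12$. So a correct treatment of the converse must confront this pattern head-on (either rule it out --- which (1)--(4) as written do not do --- or note that the characterization needs this supplementary condition); treating it as realizable, as your sketch does, is wrong either way.

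Second, the statement you identify as the crux --- that (1)--(4) forbid two different triangles of $\Star(v)$ from sending an arc into $v$ --- is false, so your main route cannot be completed. Take a vertex $v$ of valence four, put $-\tfrac12$ at the $v$-corner of two opposite triangles of $\Star(v)$ and $+\tfrac12$ at their remaining corners, zeros on the other two triangles, and let the two arcs exit the star and terminate at distant vertices: all of (1)--(4) (and even condition (5) of Observation~\ref{obs:matchingconditionRMC}) hold, yet every realization has two arc endpoints at $v$. The consistent way to read the lemma is therefore that $\NMC$ permits several arc endpoints at the same puncture (compare ``more than one component of a multicurve may have endpoints at the same puncture'' in Section~\ref{ssec:curve}), in which case there is no vertex-coherence condition to derive and the gluing really is governed only by the edge matching --- but then your planned ``contradiction at the vertex'' argument is both unavailable and unnecessary. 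Your fallback route has a related misreading: an arc component is \emph{not} detected by a chain of $-\tfrac12$'s across the triangles it traverses --- away from its two ends it contributes only integer corner counts, exactly like a loop strand --- so ``arc extraction'' from $\mathbf{w}$ is not the canonical operation you describe, though a repaired version (peel off one strand entering each $-\tfrac12$ corner, then invoke the classical normal-curve reconstruction) can be made to work once the first issue above is resolved.
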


Because of the second condition in the lemma above, we will often say that a triangle of type I is \emph{integral}, whereas triangles of type II and III are \emph{fractional} or \emph{half-integral}. 

\begin{remark}
The inquisitive reader may ask whether it is possible to generalize edge coordinates instead of corner coordinates. For type I and II triangles, the corner numbers are easily deduced from edge numbers, and vice versa, by Formulas \eqref{eqn:edgefromcorners} and \eqref{eqn:cornerfromedges}. But for type III, it is unclear, at least to the authors, how to generalize the edge coordinates. 
\end{remark}

For the remainder of this article, we will focus on the reduced multicurves $\RMC$, since they generate the curve algebra $\Curve$. 
\begin{observation} \label{obs:matchingconditionRMC}
For any $\alpha \in \RMC$, its generalized corner coordinates $\phi(\alpha) = (\alpha(c))_{c \in C} \in \left(\frac{1}{2}\ZZ\right)^{C}$ satisfy the four conditions (1)--(4) of Lemma~\ref{lem:matchingcondition} and 
\begin{enumerate} \setcounter{enumi}{4}
\item at every vertex $v$, there must be at least one corner $c=(v, \Delta)$ such that $\alpha(c) \le 0$.  
\end{enumerate}
Conversely, if $\mathbf{w} \in \left(\frac{1}{2}\ZZ\right)^{C}$ is a vector satisfying (1)--(4) of Lemma~\ref{lem:matchingcondition} and (5) above, then there exists $\alpha \in \RMC$ such that $\phi(\alpha) = \mathbf{w}$. 
\end{observation}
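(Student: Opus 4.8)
The plan is to reduce everything to Lemma~\ref{lem:matchingcondition}: since conditions (1)--(4) and the realization statement are already recorded there for $\NMC$, the only new content is condition (5). So I would prove two things. \emph{(i)} For $\alpha \in \RMC$, the coordinates $\phi(\alpha)$ satisfy (5). \emph{(ii)} If $\mathbf{w} \in \left(\tfrac12\ZZ\right)^{C}$ satisfies (1)--(5), then the normal multicurve realizing $\mathbf{w}$ (which exists by Lemma~\ref{lem:matchingcondition}) may be taken to be reduced.

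For (i), fix a vertex $v$. If some component of $\alpha$ is an arc ending at $v$, let $\Delta$ be the triangle containing the germ of this arc at $v$; by Definition~\ref{def:cornercoordinatemap} the corner $(v,\Delta)$ has coordinate $-\tfrac12 \le 0$ (this is the ``arc connecting $v$ to the opposite edge'' case of type~II, or the ``$v$ is an end of $e$'' case of type~III), so (5) holds at $v$. If no component of $\alpha$ ends at $v$, suppose toward a contradiction that $\alpha(c) > 0$ for every corner $c = (v,\Delta)$. Then in each triangle $\Delta$ at $v$ there is at least one component of $\alpha\cap\Delta$ joining the two edges incident to $v$; let $\sigma_{\Delta}$ be the one closest to $v$. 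Using that $\alpha$ is embedded, I would show that no strand of $\alpha$ lies between $\sigma_{\Delta}$ and $v$ (any such strand would either be a turn-back, or an around-$v$ strand closer than $\sigma_{\Delta}$, or would have to cross $\sigma_{\Delta}$), so $\sigma_{\Delta}$ meets each edge at $v$ at the point of $\alpha$ nearest $v$ on that edge. Since that nearest point on a common edge is the same from both sides, the $\sigma_{\Delta}$ share endpoints and fit together into a single closed component of $\alpha$ bounding a disk whose only puncture is $v$ --- a puncture loop, contradicting $\alpha\in\RMC$. Hence (5) holds.

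For (ii), take $\alpha \in \NMC$ with $\phi(\alpha) = \mathbf{w}$ from Lemma~\ref{lem:matchingcondition}, and delete any component lying in a single open triangle (these are trivial loops contributing $0$ to $\phi$), so we may assume $\alpha$ has none. It then suffices to show $\alpha$ has no trivial loop and no puncture loop. If $\alpha$ had a trivial loop $\ell$ bounding a disk $D$ in $\overline{\Sigma}$ with no puncture, then $D$ contains no vertex, so $D$ meets the $1$-skeleton of $\cT$ in a union of arcs each inside a single edge; an outermost such arc cuts off from $D$ a bigon contained in one triangle, bounded by an edge-segment and a single normal arc of $\ell$ --- a turn-back, contradicting normality (and $D$ meeting no edge is excluded by our choice of $\alpha$). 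If $\alpha$ had a puncture loop around $v$, then $\ell$ crosses each triangle at $v$ from one incident edge to the other, so $\mathbf{w}(c) = \alpha(c) \ge 1 > 0$ for every corner $c$ at $v$, contradicting (5). Therefore $\alpha \in \RMC$ realizes $\mathbf{w}$.

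The main obstacle is the innermost-strand argument in step (i): one must rule out, using only embeddedness, any strand slipping between $\sigma_{\Delta}$ and $v$ or breaking the concatenation at a shared edge, and one must check that this is unaffected when a triangle at $v$ is fractional without an arc at $v$ --- in that case the distinguished type~II/III arc of the triangle does not separate $v$ from the around-$v$ strands, so the argument goes through, but this needs a short case check. The trivial-loop step in (ii) is standard normal-surface theory, the only nuisance being the harmless interior-of-triangle components disposed of at the outset.
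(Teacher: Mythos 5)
Your overall architecture (reduce to Lemma~\ref{lem:matchingcondition}, verify (5) for $\alpha\in\RMC$, then upgrade the realizing normal multicurve to a reduced one) is the natural one, and part (ii) together with the arc-at-$v$ case of part (i) are fine. But there is a genuine gap in part (i), in the case where no arc of $\alpha$ ends at $v$. From ``$\alpha(c)>0$ for every corner $c=(v,\Delta)$'' you infer that \emph{every} triangle $\Delta$ at $v$ contains a component of $\alpha\cap\Delta$ joining the two edges incident to $v$. That inference is false: by Definition~\ref{def:cornercoordinatemap}, a corner $(v,\Delta)$ of type II or III whose distinguished vertex-arc does not involve $v$ --- for instance an arc component of $\alpha$ ending at a link vertex $u$ of $\Delta$ and running to the edge opposite $u$, or the edge of $\Delta$ opposite $v$ occurring as a component of $\alpha$ --- has coordinate $a+\tfrac12$, which is strictly positive even when $a=0$, i.e.\ even when no strand of $\alpha\cap\Delta$ cuts the corner at $v$. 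For such a triangle your $\sigma_{\Delta}$ simply does not exist, the cyclic chain of innermost strands cannot be closed up, and no puncture loop is produced. Your closing caveat addresses a different (and harmless) point --- whether the type II/III arc separates $v$ from the around-$v$ strands --- not the actual problem, which is that the around-$v$ strand may be absent in that triangle altogether.

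Condition (5) does appear to be true, but disposing of these ``fractional at $v$ without a corner-cutting strand'' triangles is more than a short case check; one must show that their presence forces some other corner at $v$ to be $\le 0$, or leads to a forbidden configuration, and this uses embeddedness more globally than the innermost-strand picture. For example, if the edge of $\Delta$ opposite $v$ is a component of $\alpha$, then any strand entering $\Delta$ through either edge at $v$ is forced to cut the corner at $v$ (it can neither cross that component nor end at the already-occupied link vertices); so a triangle of this kind with $a=0$ has no crossings on its two edges at $v$ at all, and one then argues about the neighboring corners. When the fractional triangle comes instead from an arc ending at a link vertex, one can locally arrange every triangle at $v$ to be either fractional or corner-cut, and ruling such configurations out requires forced-continuation and nesting arguments (a curve trapped on the $v$-side of such an arc either must spiral around $v$ indefinitely or violates a nesting parity when it tries to escape), which is a different argument from, and substantially longer than, the concatenation of the $\sigma_{\Delta}$ into a puncture loop. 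Note also that the paper records this statement as an observation without proof (and leaves Lemma~\ref{lem:matchingcondition} as an exercise), so the burden of this fractional case rests entirely on your write-up; as it stands, step (i) does not establish (5).
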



\section{The positive and negative resolutions}\label{sec:resolutions}

Let $\alpha \in \RMC$, and let $e \in E$.  If $|\alpha \cap e| = n$, then their product $e\alpha \in \Curve$ can be decomposed into a $\CC[v_{i}^{\pm}]$-linear combination of $2^n$ crossingless multicurves.  Of those, there are two special ones.   

\subsection{Definition of $P\alpha$ and $N\alpha$} \label{def:PandN}

We say that a \emph{positive resolution of an interior crossing of $e$ and $\alpha$} is one that goes counterclockwise from $e$ to $\alpha$ as in Figure~\ref{fig:pos res int}. A \emph{positive resolution of an endpoint crossing of $e$ and $\alpha$} is one that goes clockwise from $e$ to $\alpha$ as in Figure~\ref{fig:posresend}, and when in the exceptional case where $e$ and $\alpha$ coincide (e.g. when $e$ is a component of $\alpha$),  is the one in Figure~\ref{fig:pos res e=alpha}. Let $P_e\alpha$ be the reduced multicurve isotopic to the one obtained by a positive resolution at every crossing of $e$ and  $\alpha$ and called the \emph{positive resolution} of $e\cup \alpha$ (or of $e \alpha$). For notational simplicity, we write $P\alpha$ instead of $P_e\alpha$.  
\begin{figure}[htpb]
    \centering
    \begin{subfigure}[b]{0.25\textwidth}
        \includegraphics[width=\textwidth]{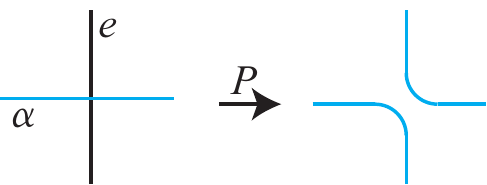}
        \caption{interior crossing}\label{fig:pos res int}
    \end{subfigure}
\quad \quad  \qquad
    \begin{subfigure}[b]{0.25\textwidth}
        \includegraphics[width=\textwidth]{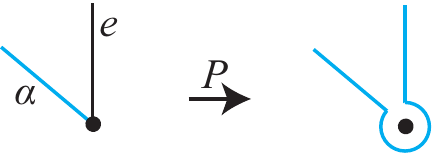}
        \caption{endpoint crossing}\label{fig:posresend}
    \end{subfigure} 
\quad \quad  \qquad
    \begin{subfigure}[b]{0.2\textwidth}
        \includegraphics[width=.9\textwidth]{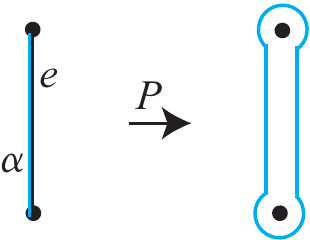}
        \caption{$P\alpha$ when $e \subseteq \alpha$}\label{fig:pos res e=alpha}
    \end{subfigure}
\caption{ $P \alpha$ is depicted in blue on the right side of the arrow.}
\end{figure}

We may similarly define the \emph{negative resolution} $N_{e}\alpha = N\alpha$ by taking the opposite resolution for every intersection. For an interior crossing of $e$ and $\alpha$, we go clockwise from $e$ to $\alpha$. For an endpoint crossing, we move counterclockwise from $e$ to $\alpha$.   And in the exceptional case where $e$ is a component of $\alpha$, so that $\alpha = e \sqcup (\alpha \setminus e)$,  then $N\alpha = -2(\alpha \setminus e)$. 

\begin{figure}[htpb]
    \centering
    \begin{subfigure}[b]{0.25\textwidth}
        \includegraphics[width=\textwidth]{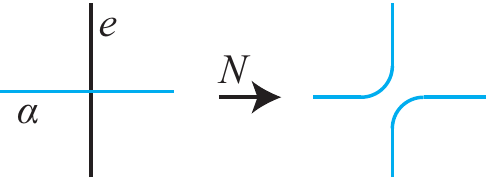}
        \caption{interior crossing}\label{fig:neg res int}
    \end{subfigure}
\quad \quad  \qquad
    \begin{subfigure}[b]{0.25\textwidth}
        \includegraphics[width=\textwidth]{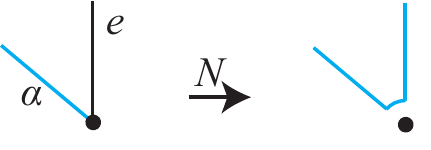}
        \caption{endpoint crossing}\label{fig:negresend}
    \end{subfigure} 
\quad \quad  \qquad
    \begin{subfigure}[b]{0.2\textwidth}
        \includegraphics[width=.9\textwidth]{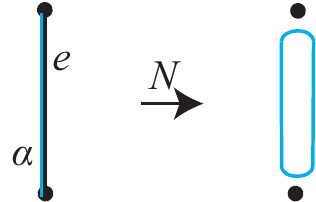}
        \caption{$N\alpha$  when $e \subseteq \alpha$}\label{fig:neg res e=alpha}
    \end{subfigure}
\caption{$N \alpha$  is depicted in blue on the right side of the arrow.}
\end{figure}

When $e$ and $\alpha$ do not intersect, there are no crossings to resolve; in that case $P \alpha = N \alpha = \alpha \sqcup e$.

By identifying $\alpha \in \RMC$ with its generalized corner coordinates $\phi(\alpha) \in \left(\frac{1}{2}\ZZ\right)^{C}$,  we may regard $P$ and $N$ as functions $\left(\frac{1}{2}\ZZ\right)^{C} \to \left(\frac{1}{2}\ZZ\right)^{C}$.  The goal of the next few sections is to write down formulas for these two resolution maps when $\Sigma$ is locally planar.


\subsection{Definition of $\RMC^{j}$} \label{sec:RMC} 
The formulas for the positive and negative resolution maps will usually require us to subdivide $\RMC$. For $j = 0, 1, 2$, define  $\RMC^{j}$ to be the set of reduced multicurves which share $j$ endpoints with $e$. Let $\Curve^{j}$ be the submodule of $\Curve$ generated by $\RMC^{j}$. 
Since the basis elements are disjoint, $\Curve = \Curve^{0} \oplus \Curve^{1} \oplus \Curve^{2}$.

When the edge $e$ has two distinct vertices $v_{0}$ and $v_{1}$, on occasion we will further refine $\RMC^{1}$ and $\Curve^{1}$ into two smaller sets, according to where $\alpha$ and $e$ intersect.  For $i = 0, 1$ let $\RMC^{1}_{v_{i}}$ be the set of reduced multicurves in $\RMC$ with an endpoint at $v_{i}$, and let $\Curve^{1}_{v_{i}}$ be the submodule of $\Curve$ generated by $\RMC^{1}_{v_{i}}$.  Clearly, $\Curve^{1} = \Curve^{1}_{v_{i}} \oplus \Curve^{1}_{v_{1-i}}$.

The next two lemmas are immediate from the definition of the curve algebra.  
\begin{lemma} \label{lem:jto2-j}  $e \cdot \Curve^{j} \subset \Curve^{2-j}$.  Moreover,  $e \cdot \Curve^{1}_{v_{i}} \subset \Curve^{1}_{v_{1-i}}$ . 
\end{lemma}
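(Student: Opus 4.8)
The plan is to argue directly from the skein relations in Definition~\ref{def:curvealgebra} together with the defining feature of the submodules $\Curve^j$: namely, $\alpha \in \RMC^j$ precisely when $\alpha$ shares exactly $j$ of its arc-endpoints with the two vertices of $e$. Fix $\alpha \in \RMC^j$. By Lemma~\ref{lem:Laurantpolynomialring} and Lemma 3.18 of \cite{RogerYang14} — or, more elementarily, by repeated application of the first and second skein relations — the product $e\alpha$ can be written as a $\CC[v_i^\pm]$-linear combination of crossingless generalized multicurves, each obtained from $e\cup\alpha$ by resolving its interior intersections (via relation 1) and its endpoint intersections at the vertices of $e$ (via relation 2), then removing any trivial or puncture loops (relations 3, 4). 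The point is purely local: I only need to track what happens at the endpoints of $e$.

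First I would count endpoints. The edge $e$ has two ends, at vertices $v_0$ and $v_1$ (possibly $v_0 = v_1$). Since $\alpha$ is reduced, at each vertex $v$ there is at most one arc of $\alpha$ incident to $v$; so $\alpha$ contributes at most one endpoint to each of $v_0, v_1$, and by definition of $\RMC^j$, exactly $j$ of the two ends of $e$ coincide with an endpoint of $\alpha$. Now I examine the resolution at each end of $e$. At an end of $e$ where $\alpha$ has no incident arc, that endpoint of $e$ simply survives as an endpoint of every resolution term, contributing $1$ to its endpoint count. At an end of $e$ where $\alpha$ does have an incident arc, the two strands meeting there get resolved by relation 2: each resolution term replaces the two endpoints (one from $e$, one from $\alpha$) by a single strand passing through with no endpoint at that vertex, so that endpoint is \emph{consumed}, contributing $0$. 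Interior resolutions via relation 1 never create or destroy endpoints, and deleting trivial/puncture loops obviously does not affect endpoint counts. Hence every resolution term has exactly $(2-j)$ of its endpoints among $\{v_0, v_1\}$ that came from this construction, i.e. it lies in $\RMC^{2-j}$ (after isotopy and reduction), giving $e\cdot\Curve^j \subset \Curve^{2-j}$. For the refined statement, when $v_0 \neq v_1$ and $\alpha \in \RMC^1_{v_i}$, the incident-arc end of $e$ is at $v_i$ (consumed) and the free end is at $v_{1-i}$ (survives), so every resolution term has an endpoint at $v_{1-i}$ and none at $v_i$; thus $e\cdot\Curve^1_{v_i}\subset\Curve^1_{v_{1-i}}$.

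I do not expect a serious obstacle here — the lemma is asserted to be ``immediate from the definition of the curve algebra.'' The one point requiring a word of care is the exceptional case where $e$ itself is a component of $\alpha$ (so $\type$ III at the corners of $e$): then $\alpha \in \RMC^2$ and $e\alpha = e^2\sqcup(\alpha\setminus e)$, and relation 2 applied at both ends of $e$, combined with relation 3 or 4 to absorb the resulting closed loop, still produces terms whose endpoints at $v_0, v_1$ are consumed, landing in $\Curve^0$; this is consistent with $e\cdot\Curve^2\subset\Curve^0$. Everything else is a bookkeeping exercise on which vertices the surviving endpoints sit at, done locally near the two ends of $e$.
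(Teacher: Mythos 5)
Your proof is correct and matches the paper's (implicit) reasoning: the paper offers no argument beyond declaring the lemma ``immediate from the definition of the curve algebra,'' and your endpoint bookkeeping at the two ends of $e$ --- endpoints of $e$ surviving where $\alpha$ is absent, and being consumed by relation~2 where $\alpha$ has an incident arc, with relations~1, 3, 4 and isotopy never creating or destroying endpoints --- is exactly the intended justification, including the correct handling of the case $e \subseteq \alpha$.
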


\begin{lemma} \label{lem:jto2-jPN}
 $P, N: \RMC^{j} \to \RMC^{2-j}$.   Moreover, $P, N : \RMC^{1}_{v_{i}} \to \RMC^{1}_{v_{1-i}}$. 
\end{lemma}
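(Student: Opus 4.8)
The plan is to reduce the statement to Lemma~\ref{lem:jto2-j}, since $P_{e}\alpha$ and $N_{e}\alpha$ are simply two of the terms occurring in the expansion of $e\alpha$ as a $\CC[v_{i}^{\pm}]$-linear combination of reduced multicurves — namely the all-positive and all-negative resolutions, each with a nonzero coefficient. Applying Lemma~\ref{lem:jto2-j} to $\alpha\in\RMC^{j}\subset\Curve^{j}$ gives $e\alpha\in\Curve^{2-j}$, and since $\Curve^{2-j}$ is freely spanned over $\CC[v_{i}^{\pm}]$ by $\RMC^{2-j}$ (Proposition~\ref{prop:generatorsofcurvealgebra}), every reduced multicurve appearing in $e\alpha$ lies in $\RMC^{2-j}$; in particular so do $P_{e}\alpha$ and $N_{e}\alpha$. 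The refined statement follows identically from the refined half of Lemma~\ref{lem:jto2-j}. For completeness I would also spell out the short argument underlying Lemma~\ref{lem:jto2-j} itself, which in fact applies directly to $P_{e}\alpha$ and $N_{e}\alpha$.

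First I would separate the points of $\alpha\cap e$ into \emph{interior crossings} (transversal double points in the interiors of $e$ and of a component of $\alpha$) and \emph{endpoint crossings} at a vertex $v$ of $e$, the latter occurring exactly when a component of $\alpha$ is an arc ending at $v$; since $e$ and $\alpha$ are reduced, each of the two vertices $v_{0},v_{1}$ of $e$ carries at most one endpoint crossing. Then I would track what a single resolution does to the set of endpoints — the boundary points of the underlying $1$-manifold, which all map to $V$. Resolving an interior crossing is, by the first skein relation in Definition~\ref{def:curvealgebra}, a local surgery supported away from the vertices, hence it does not change the endpoints, in particular not those lying in $\{v_{0},v_{1}\}$. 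Resolving an endpoint crossing at $v_{k}$ is, by the second (puncture) skein relation in Definition~\ref{def:curvealgebra}, exactly the move that joins the two strands terminating at $v_{k}$ into a single strand disjoint from $v_{k}$, so it deletes both endpoints at $v_{k}$. Consequently, after performing all the resolutions that build any term of $e\alpha$ — in particular $P_{e}\alpha$ or $N_{e}\alpha$ — the resulting multicurve has an endpoint at $v_{k}\in\{v_{0},v_{1}\}$ if and only if $\alpha$ has no arc ending at $v_{k}$. Passing to the reduced representative only removes trivial and puncture loops and turn-backs inside triangles, none of which meet a vertex, so this endpoint count is unaffected; and no term vanishes, since relations (3) and (4) contribute only scalars. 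Counting endpoints then gives $P,N\colon\RMC^{j}\to\RMC^{2-j}$, and when $v_{0}\ne v_{1}$ and $\alpha\in\RMC^{1}_{v_{i}}$ the single surviving endpoint is $v_{1-i}$, giving $P,N\colon\RMC^{1}_{v_{i}}\to\RMC^{1}_{v_{1-i}}$.

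I do not expect a genuine obstacle here: the lemma really is, as the paper says, immediate from the definition of the curve algebra. The only places calling for a little care are the exceptional case $e\subseteq\alpha$ — where $\alpha\in\RMC^{2}$ and the stated formulas give $N_{e}\alpha=-2(\alpha\setminus e)$ with $\alpha\setminus e\in\RMC^{0}$, and likewise $P_{e}\alpha\in\RMC^{0}$, consistently with $2-j=0$ — and the degenerate possibility that $e$ is a loop based at a single vertex, in which case the unrefined statement must be read as a count of half-edges at that vertex; this degenerate case does not arise in the refined assertion, which assumes $v_{0}\ne v_{1}$.
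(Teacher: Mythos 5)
Your direct endpoint-counting argument is correct and is exactly the content the paper leaves as ``immediate from the definition'': resolving an endpoint crossing deletes the shared end of $e$ and $\alpha$ at that vertex, while interior resolutions and the passage to the reduced representative leave arc endpoints untouched, so $P_{e}\alpha$ and $N_{e}\alpha$ end at precisely the $2-j$ vertices of $e$ not used by $\alpha$ (and at $v_{1-i}$ in the refined case), and your remarks on the exceptional case $e\subseteq\alpha$ and on a loop edge are consistent with the paper's conventions. One caution about the first-paragraph shortcut: deducing the lemma from Lemma~\ref{lem:jto2-j} requires knowing that $P_{e}\alpha$ and $N_{e}\alpha$ occur with nonzero coefficient in the basis expansion of $e\alpha$, which is not justified at this stage (different resolutions can reduce to the same element of $\RMC$, and trivial loops contribute factors of $-2$, so cancellation is not obviously excluded --- ruling out such cancellations is precisely the delicate work of Sections~\ref{sec:injectivity}--\ref{sec:nonzerodivisor}); since your second argument is self-contained, the proof should rest on it alone.
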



\subsection{ Notation for the corners of $\Star(e)$}

We assume from now on that $\Sigma$ admits a locally planar triangulation $\cT$.

Our first objective will be to describe the corner coordinates of $P \alpha$ and $N\alpha$ when $\Sigma$.  To do so, we need notation for the vertices and edges in a star neighborhood of the edge $e$.

 Let the two vertices of $e$ be denoted by $v$ and $w$.  We will think of $e$ as vertical, so that $v$ is at the top and $w$ is at the bottom. There is also an endpoint opposite $e$ to the left, and an endpoint opposite $e$ to the right, and these two and $v$ and $w$ are distinct from local planarity. Let $\Delta_{L}$ be the triangle on the left of $e$, and $\Delta_{R}$ be the triangle on the right.

Label the corners in $\Star(e)$ as in Figure \ref{fig:localpicture}. In particular, let $a_0$ be the corner at $v$ which is adjacent to and on the left of $e$.  Let $a_0, a_1, \ldots, a_s$ be the successive corners going counterclockwise around $v$, so that $a_s$ is the corner at $v$ which is adjacent to and on the right of $e$. For each $i = 0, \ldots, s$, let $a_i^L$ (resp. $a_i^R$) correspond to the corner opposite to and on the left side (resp. right side) of the triangle containing the corner $a_i$.  We say that $a_i$, $a_i^L$, $a_i^R$ for $i= 0, \ldots, s$ are the \emph{top corners},  near $v$.    Similarly, let $b_0, \ldots b_t$ be the corners going counterclockwise around $w$, starting at $e$, let $b_j^L$ and $b_j^R$ be the corners opposite $b_j$ for $j = 0, \ldots, t$, and we say that these near $w$ are the \emph{bottom corners}. 

\begin{figure}[!htpb]
   \includegraphics[width=.25\textwidth]{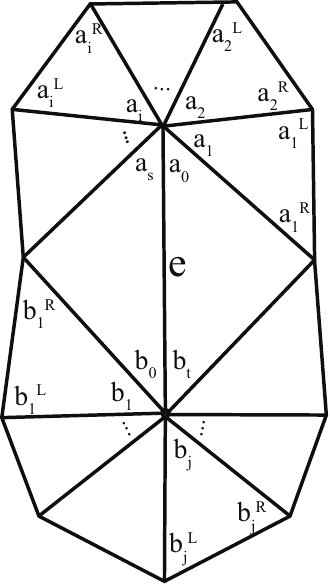}
\caption{Notation for vertices and edges in $\Star(e)$}\label{fig:localpicture}
\end{figure}

Note that by locally planarity assumption, $s, t \ge 2$ and all of the corners near $v$ and the corners near $w$ are distinct and the notation is well-defined, except for the six corners at the two triangles on either side of $e$.  The only identifications are $b_t = a_0^R$, $a_0 = b_t^L$, $a_0^L = b_t^R$, and $a_s = b_0^R$, $b_0 = a_s^L$, $b_s^R = a_0^L$.

\subsection{Corner cooordinates of $P\alpha$ and $N\alpha$ when $\Sigma$ is locally planar} \label{sec:algorithms}

By the local planarity assumption, any change in the corner coordinates between $\alpha$ and $P \alpha$ or $N\alpha$ can occur only on $\Star(e)$.  The changes are tractable, and in this section we describe the algorithms to compute them.  
Because there are no turnbacks in $P\alpha$ or $N \alpha$, the only place where a isotopy is needed to reduce them is near the vertices of $e$.  For example,  if $\alpha$ does not intersect $e$ at the vertex $v$, then $P \alpha$ and $N \alpha$ will end at $v$ and could wind around it.   See Figure~\ref{fig:RMC0Case1}, which we will discuss in more detail shortly. 

 As we shall see, the formulas obtained for $P\alpha$ and $N \alpha$ are relatively simple. Each formula is composed of two independent parts, one for each of vertices.  Each of those two parts depends only on  whether $\alpha$ ends at $v$, whether $\alpha$ ends at $w$, and on the placement of the first corner around the vertex such that the corner coordinate is $0$ or $-\frac{1}{2}$.
  
\subsubsection{Formulas when $\alpha \in \RMC^0$} \label{sec:rho0}

Figure \ref{fig:RMC0Case1} shows two sample computations for $P \alpha$ near the top vertex $v$.     
\begin{figure}[!htpb]
     \includegraphics[width=.7\textwidth]{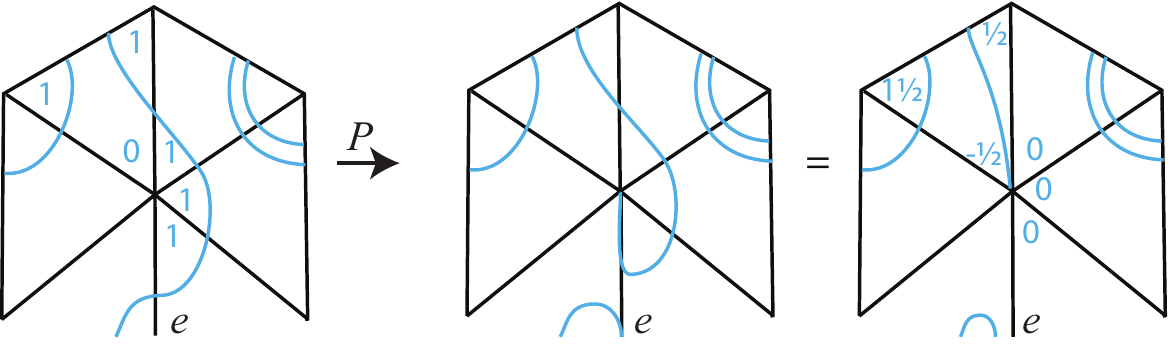}
     \includegraphics[width=.7\textwidth]{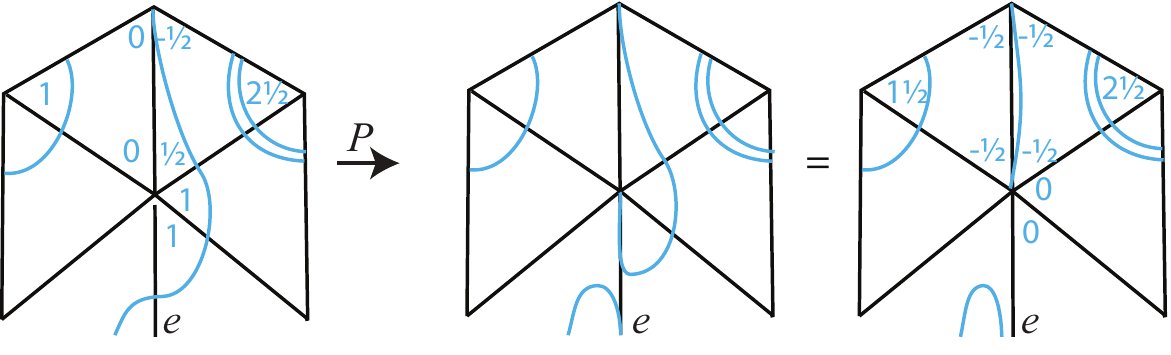}
\caption{Computations of $P \alpha$ for  $\alpha \in \RMC^{0}$} \label{fig:RMC0Case1}
\end{figure}
Notice that $P\alpha$ can be unwound around $v$, depending exactly on how far the innermost strand of $\alpha$ winds around $v$ to begin with.  Because there are no turnbacks in $P\alpha$, no other simplifications are possible, except at the bottom vertex where similar unwinding can occur.  If the reader so wishes, they may rotate each of the pictures in Figure \ref{fig:RMC0Case1} by $180^\circ$ to see examples of calculations near the bottom vertex.   Any unwinding around the top vertex does not interfere with unwinding around the bottom vertex.

\begin{algorithm}\label{alg:PRMC0}[Coordinates for $P\alpha$ when $\alpha \in \RMC^{0}$]

 Let $a_m$ be the first corner of $v$ (going counterclockwise, starting at $e$) where  $\alpha$ has corner number zero.  
Define the \emph{top change} $P_{0}^{t}$ linearly as follows: $P_{0}^{t}\alpha(a_{i}) = \alpha(a_{i}) -1$ if $i < m$, $P_{0}^{t}\alpha(a_{m}) = a_{m} -\frac{1}{2}$, $P_{0}^{t}\alpha(a_{m}^{R}) = \alpha(a_{m}^{R}) - \frac{1}{2}$, and $P_{0}^{t}\alpha(a_{m}^{L}) = \alpha(a_{m}^{L}) + \frac{1}{2}$. The other corners are same. 

Let $b_n$ be the first corner of $w$ (going counterclockwise, starting at $e$) where $\beta$ has corner number zero.  
Define the \emph{bottom change} $P_{0}^{b}$  linearly as follows: $P_{0}^{b}\alpha(b_{i}) = \alpha(b_{i}) - 1$ if $i < n$,  $P_{0}^{b}\alpha(b_{n}) = \alpha(b_{n}) - \frac{1}{2}$, $P_{0}^{b}\alpha(b_{n}^{R}) = \alpha(b_{n}^{R}) - \frac{1}{2}$, and $P_{0}^{b}\alpha(b_{n}^{L}) = \alpha(b_{n}^{L}) + \frac{1}{2}$. The other corners are same. 

When $\Sigma$ is locally planar and $\alpha \in \RMC^0$,  $P\alpha = P_{0}^{t}P_{0}^{b}\alpha = P_{0}^{b}P_{0}^{t}\alpha$. 
\end{algorithm}

For example, if $m \neq 0, s$ and $n = 0$, then because $a_s = b_0^R$, $P \alpha(a_s) = P_1^t (P_0^b \alpha(a_s) )= P_1^t(\alpha(a_s) - \frac{1}{2}) = \alpha(a_s) - \frac{1}{2}$.  However, in this case and in all others we subsequently describe, the corners $a_0$, $a_s$, $b_0$, and $b_t$ are the only ones that could possibly be affected by both the top change and the bottom change.  

 We leave verification of the algorithm as an exercise for the interested reader.   Notice that  for $\alpha \in \RMC^{0}$, the case $m = s$ and $n= 0$ is impossible.  Furthermore, the top change and bottom change do not affect each other, whence it follows that $P_{0}^{t}P_{0}^{b}= P_{0}^{b}P_{0}^{t}$.

By reflecting the figure horizontally, we get the computation for $N$.   Thus, in contrast, the formulas for $N\alpha$ go clockwise.

\begin{algorithm}\label{alg:NRMC0}[Coordinates for $N\alpha$ when $\alpha \in \RMC^{0}$]
Let $a_{m}$  be the first corner of $v$ (going clockwise, starting at $e$) where $\alpha$ has corner number zero. 
Define a \emph{top change} $N_{0}^{t}$ linearly as follows: $N_{0}^{t}\alpha(a_{i}) = \alpha(a_{i}) -1$ if $i > m$,  $N_{0}^{t}\alpha(a_{m}) = a_{m} -\frac{1}{2}$, $N_{0}^{t}\alpha(a_{m}^{R}) = \alpha(a_{m}^{R}) + \frac{1}{2}$, and $N_{0}^{t}\alpha(a_{m}^{L}) = \alpha(a_{m}^{L}) - \frac{1}{2}$. The other corners are same. 

Let $b_{n}$  be the first corner of $w$ (going clockwise, starting at $e$) where $\alpha$ has corner number zero. 
Define a \emph{bottom change} $N_{0}^{b}$ linearly as follows: $N_{0}^{b}\alpha(b_{i}) = \alpha(b_{i}) - 1$ if $i < n$,  $N_{0}^{b}\alpha(b_{n}) = \alpha(b_{n}) - \frac{1}{2}$, $N_{0}^{b}\alpha(b_{n}^{R}) = \alpha(b_{n}^{R}) + \frac{1}{2}$, and $N_{0}^{b}\alpha(b_{n}^{L}) = \alpha(b_{n}^{L}) - \frac{1}{2}$. The other corners are same. 

When $\Sigma$ is locally planar and $\alpha \in \RMC^0$,   $N\alpha = N_{0}^{t}N_{0}^{b}\alpha = N_{0}^{b}N_{0}^{t}\alpha$. 
\end{algorithm}

\subsubsection{Formulas when $\alpha \in \RMC^2$} \label{sec: rho2}
In this case, $\alpha$ intersects $e$ at both endpoints.  In Figure~\ref{fig:RMC2Case}, we've illustrated how to compute $P \alpha$  where $\alpha$ approaches the top vertex in a type II triangle, and another where $\alpha$ approaches in a type III triangle.  
\begin{figure}[!htpb]
      \includegraphics[width=.7\textwidth]{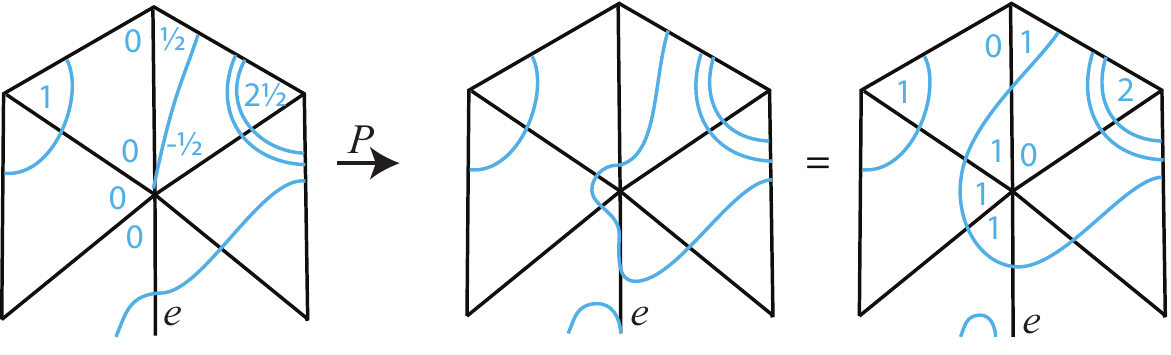}
     \includegraphics[width=.7\textwidth]{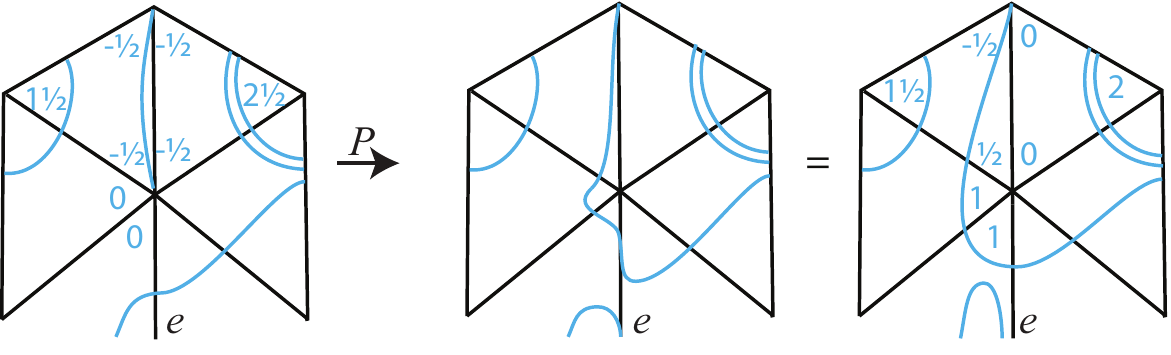}
\caption{Computing $P \alpha$ for  $\alpha \in \RMC^{2}$} \label{fig:RMC2Case}
 \label{RMC2Case3}
\end{figure}
In both examples, the $a_2$ is the first corner of $v$ where $\alpha$ has corner number $-\frac{1}{2}$.  Moreover, we see that the affected corners are $a_2$, $a_{2}^{R}$, $a_{2}^{L}$, and $a_3, \ldots, a_s$, and that the changes are by the same amount for both.  
This generalizes to the following algorithm.

\begin{algorithm}\label{alg:PRMC2}[Coordinates for $P\alpha$ when $\alpha \in \RMC^{2}$]
 Let $a_{m}$ be the first corner of $v$ (going counterclockwise, starting at $e$) where $\alpha$ has corner number $-\frac{1}{2}$.  Define a \emph{top change} $P_{1}^{t}$ linearly as follows: $P_{1}^{t}\alpha(a_{i}) = \alpha(a_{i}) +1$ if $i > m$,  $P_{1}^{t}\alpha(a_{m}) = a_{m} + \frac{1}{2}$, $P_{1}^{t}\alpha(a_{m}^{R}) = \alpha(a_{m}^{R}) - \frac{1}{2}$, and $P_{1}^{t}\alpha(a_{m}^{L}) = \alpha(a_{m}^{L}) + \frac{1}{2}$. The other corners are same. 

Let $b_{n}$ be the first corner of $w$ (going counterclockwise , starting at $e$)
where $\alpha$ has corner number $ -\frac{1}{2}$. 
Define a \emph{bottom change} $P_{1}^{b}$ linearly as follows: $P_{1}^{b}\alpha(b_{i}) = \alpha(b_{i}) + 1$ if $i > n$,  $P_{1}^{b}\alpha(b_{n}) = \alpha(b_{n}) + \frac{1}{2}$, $P_{1}^{b}\alpha(b_{n}^{R}) = \alpha(b_{n}^{R}) - \frac{1}{2}$, and $P_{1}^{b}\alpha(b_{n}^{L}) = \alpha(b_{n}^{L}) + \frac{1}{2}$. The other corners are same. 

When $\Sigma$ is locally planar,  $P\alpha = P_{1}^{t}P_{1}^{b}\alpha = P_{1}^{b}P_{1}^{t}\alpha$. 
\end{algorithm}

As before, by reflecting the picture horizontally, we obtain the algorithm for $N$.

\begin{algorithm}\label{alg:NRMC2}[Coordinates for $N\alpha$ when $\alpha \in \RMC^{2}$]
Let $a_{m}$ be the first corner of $v$ (going clockwise, starting at $e$) where $\alpha$ has corner number $-\frac{1}{2}$.  
Define a \emph{top change} $N_{1}^{t}$ linearly as follows: $N_{1}^{t}\alpha(a_{i}) = \alpha(a_{i}) +1$ if $i < m$,  $N_{1}^{t}\alpha(a_{m}) = a_{m} + \frac{1}{2}$, $N_{1}^{t}\alpha(a_{m}^{R}) = \alpha(a_{m}^{R}) + \frac{1}{2}$, and $N_{1}^{t}\alpha(a_{m}^{L}) = \alpha(a_{m}^{L}) - \frac{1}{2}$. The other corners are same. 

Let $b_{n}$ be the first corner of $v$ (going clockwise, starting at $e$) where $\alpha$ has corner number $-\frac{1}{2}$. 
Define a \emph{bottom change} $N_{1}^{b}$ linearly as follows: $N_{1}^{b}\alpha(b_{i}) = \alpha(b_{i}) + 1$ if $i < n$,  $N_{1}^{b}\alpha(b_{n}) = \alpha(b_{n}) + \frac{1}{2}$, $N_{1}^{b}\alpha(b_{n}^{R}) = \alpha(b_{n}^{R}) + \frac{1}{2}$, and $N_{1}^{b}\alpha(b_{n}^{L}) = \alpha(b_{n}^{L}) - \frac{1}{2}$. The other corners are same. 

Then $P\alpha = N_{1}^{t}N_{1}^{b}\alpha = N_{1}^{b}N_{1}^{t}\alpha$. 
\end{algorithm}


\subsubsection{Formulas when $\alpha \in \RMC^1$}
 The top change and bottom change are as described previously.  
 
\begin{algorithm}\label{alg:PNRMC1}[Coordinates for $P\alpha$ when $\alpha \in \RMC^{1}$] 
\begin{enumerate} 
\item $P\alpha = P_{1}^{b}P_{0}^{t}\alpha = P_{0}^{t}P_{1}^{b}\alpha$ if $\alpha \in \RMC^{1}_{w}$.  
\item $P\alpha = P_{0}^{b}P_{1}^{t}\alpha = P_{1}^{t}P_{0}^{b}\alpha$ if $\alpha \in \RMC^{1}_{v}$.  
\end{enumerate}
\end{algorithm}

\begin{algorithm}\label{alg:PNRMC1b}[Coordinates for $N\alpha$ when $\alpha \in \RMC^{1}$]
\begin{enumerate}
\item $N\alpha = N_{1}^{b}N_{0}^{t}\alpha = N_{0}^{t}N_{1}^{b}\alpha$ if $\alpha \in \RMC^{1}_{w}$. \item $N\alpha = N_{0}^{b}N_{1}^{t}\alpha = N_{1}^{t}N_{0}^{b}\alpha$ if $\alpha$ has an endpoint at $v$ if $\alpha \in \RMC^{1}_{v}$
\end{enumerate}
\end{algorithm}

We leave the verification to the interested readers.
\\


\section{Injectivity of the two resolutions on $\RMC$}\label{sec:injectivity}

As in the previous section, assume that $\Sigma$ is locally planar, and fix an edge $e$.  The goal of this section is to show that  for $\alpha \in \RMC$, the resolution maps $\alpha \mapsto P_e \alpha$ and $\alpha \mapsto N_e\alpha$ are injective.  We first introduce an important notion, the edge degree, that will be crucial to the proof of injectivity. We use the notation established in  Sections \ref{sec:generalizedcornercoord} and \ref{sec:resolutions}; see especially Figure \ref{fig:localpicture}. 

\subsection{Edge degree and leading terms}\label{ssec:localdegree}

\begin{definition}\label{def:localdegree}
For any $\alpha \in \RMC$, the \emph{edge degree} of $\alpha$ with respect to $e$ is defined to be
\[
	\deg_{e}(\alpha) := \frac{1}{2}\left(\alpha(a_{0}) + \alpha(a_{s}) + \alpha(b_{0}) + \alpha(b_{t})\right). 
\]
\end{definition}

If there is no chance of confusion, we will drop the subscript $e$. 
Note that $\deg(\alpha) \in \ZZ$, and by the matching condition at the edge $e$, 
\begin{equation}\label{eqn:locdeg=sum}
	\deg(\alpha) = \alpha(a_{0}) + \alpha(b_{t}) = \alpha(a_{s}) + \alpha(b_{0}) .
\end{equation}

\begin{definition} \label{def:leadingterm}
For any element  $\beta \in \Curve(\Sigma)$,  write it as a $\beta = \sum_{i \in I} c_i \alpha_{i}$, where $c_i \in \CC[v_{i}^{\pm}]$ and $\alpha_{i}$ is a reduced multicurve.  A \emph{leading term with respect to $\deg_{e}$} of $\beta$ is a component $\alpha_{j}$ with maximal edge degree, i.e., such that $\deg_{e}(\alpha_{j}) = \max \{ \deg_{e}(\alpha_{i} \;|\; i \in I \}$.  
\end{definition}

We will use $\deg(\alpha)$ to compare the different resolutions of $e \alpha$.  In particular, by the following lemma,  it allows us to pick out the positive and negative resolutions.  

\begin{lemma}\label{lem:PandNareleadingterms}
Let $\alpha \in \RMC$. When computing $e \alpha \in \Curve$, there are at most two leading terms with respect to $\deg_{e}$, and they are $P\alpha$ and $N\alpha$.  
\end{lemma}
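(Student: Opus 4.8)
The plan is to quantify how a resolution of $e\alpha$ differs from $P\alpha$ and $N\alpha$ by measuring the number of ``turnbacks'' (or equivalently, the number of crossings resolved in the non-dominant direction) and to show that any turnback strictly decreases the edge degree relative to $P\alpha$ and $N\alpha$. Recall that $e\alpha\in\Curve$ is a $\CC[v_i^{\pm}]$-linear combination of $2^n$ crossingless multicurves obtained by resolving each of the $n=|\alpha\cap e|$ crossings of $e$ and $\alpha$ in one of two ways; $P\alpha$ is the resolution where \emph{every} crossing goes in the positive direction, $N\alpha$ is the one where \emph{every} crossing goes in the negative direction. Every other resolution mixes the two. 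First I would set up local coordinates near $e$ using the notation of Figure~\ref{fig:localpicture}: the corners of $\Star(e)$ meeting $e$ are exactly $a_0,a_s,b_0,b_t$, so by Definition~\ref{def:localdegree} the edge degree of a resolution $\rho$ is $\tfrac12(\rho(a_0)+\rho(a_s)+\rho(b_0)+\rho(b_t))$, and by the matching condition \eqref{eqn:locdeg=sum} this equals $\rho(a_0)+\rho(b_t)=\rho(a_s)+\rho(b_0)$. Since $\alpha$ itself does not pass through $e$ (as $e\notin\alpha$ after reduction, or more precisely $\alpha$ may or may not contain $e$ — handle the $\RMC^2$-type-III case separately using Algorithm~\ref{alg:PRMC2} / the exceptional definitions in Section~\ref{def:PandN}), all the change happens inside $\Star(e)$, and we may analyze each crossing's contribution locally.

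The key computation: I would show that for the positive resolution $P\alpha$, each of the four corner coordinates $P\alpha(a_0),P\alpha(a_s),P\alpha(b_0),P\alpha(b_t)$ achieves the \emph{maximum} possible value over all $2^n$ resolutions, using the explicit Algorithms~\ref{alg:PRMC0}, \ref{alg:PRMC2}, \ref{alg:PNRMC1}. Concretely, stratify by $j\in\{0,1,2\}$ with $\alpha\in\RMC^j$, and track the contribution of the innermost strand near $v$ and near $w$. A positive resolution at an interior crossing ``pushes'' a strand across $e$ in the counterclockwise direction, which in corner-coordinate terms (by the top/bottom change formulas) increases the coordinate at $a_0$ or $a_s$ (or $b_0$ or $b_t$) compared to the negative choice at that crossing; running all crossings positively stacks these increments. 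Then $\deg(P\alpha)=\max_\rho \deg(\rho)$ because it maximizes each of the two summands $\rho(a_0)+\rho(b_t)$ simultaneously. By the horizontal-reflection symmetry (which swaps $v$-clockwise with $v$-counterclockwise and interchanges $P$ with $N$, as noted repeatedly in Section~\ref{sec:algorithms}), the same argument shows $\deg(N\alpha)=\max_\rho\deg(\rho)$ as well — here one maximizes $\rho(a_s)+\rho(b_0)$ instead, which by \eqref{eqn:locdeg=sum} gives the same total. Hence $\deg(P\alpha)=\deg(N\alpha)=\max_\rho\deg(\rho)$, and any resolution $\rho$ strictly between them — one that resolves at least one crossing positively and at least one negatively — fails to maximize both summands at once, so $\deg(\rho)<\deg(P\alpha)$. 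This shows the leading terms (those of maximal edge degree) can only be $P\alpha$ and $N\alpha$.

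I would then address two remaining points. First, the reduction/unwinding step: the raw resolutions may wind around $v$ or $w$ and require isotopy to lie in $\RMC$, but Section~\ref{sec:algorithms} already establishes that $P\alpha,N\alpha\in\RMC$ and gives their corner coordinates, and unwinding does not change the isotopy class, hence does not change edge degree; for intermediate resolutions with turnbacks one must additionally apply skein relations 3)--4) of Definition~\ref{def:curvealgebra} to remove trivial/puncture loops, but these relations only introduce scalar and $v_i^{\pm}$ factors and replace a multicurve by one with \emph{strictly fewer} intersections with the relevant region, so the resulting reduced multicurves still have edge degree $\le\max_\rho\deg(\rho)$ — in fact strictly less, since a turnback was present. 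Second, one must make sure $P\alpha\ne N\alpha$ can genuinely coincide as \emph{multicurves} without contradicting ``at most two leading terms'' — the statement says ``at most two,'' so if $P\alpha=N\alpha$ there is just one leading term, which is fine; the content is only that nothing \emph{else} can be leading. The main obstacle I anticipate is the bookkeeping in the exceptional cases where $e$ is a component of $\alpha$ (type III, $\alpha\in\RMC^2$), where $N\alpha=-2(\alpha\setminus e)$ by definition and the ``resolution'' picture degenerates; there one checks directly from Algorithm~\ref{alg:PRMC2} and the definition that $\deg(\alpha\setminus e)$ still equals the common maximal value, using that $\alpha(a_0)=\alpha(b_t)=-\tfrac12$ forces specific local behavior. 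Carefully reconciling the corner-coordinate formulas across all of $\RMC^0,\RMC^1_v,\RMC^1_w,\RMC^2$ is routine given the Algorithms but tedious, and that is where I'd expect to spend the bulk of the verification.
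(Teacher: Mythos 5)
Your overall plan (turn-backs force a strict drop in edge degree) is the same as the paper's, but the argument you actually give does not deliver it, and its central step is false. You claim that $P\alpha$ simultaneously maximizes \emph{all four} corner coordinates at $a_{0}, a_{s}, b_{0}, b_{t}$ over the $2^{n}$ resolutions. But $N\alpha$ is itself one of those resolutions and has the same edge degree as $P\alpha$ (Lemma \ref{lem:Pisorderpreserving}), while in general its corner coordinates near $e$ differ from those of $P\alpha$: for instance, for $\alpha\in\RMC^{0}$ in the generic case $m\ne 0,s$ and $n\ne 0,t$ of Lemma \ref{lem:locdegcomputation} (see Figure \ref{fig:degchangeRMC0}), $P$ leaves $\alpha(a_{s})$ unchanged while $N$ decreases it by $1$; since $\deg_{e}(P\alpha)=\deg_{e}(N\alpha)$, equation \eqref{eqn:locdeg=sum} then forces $N\alpha(b_{0})=P\alpha(b_{0})+1$, so $P\alpha$ does \emph{not} maximize the coordinate at $b_{0}$. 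The subsequent ``two summands'' step collapses as well: by \eqref{eqn:locdeg=sum} the quantities $\rho(a_{0})+\rho(b_{t})$ and $\rho(a_{s})+\rho(b_{0})$ agree for \emph{every} reduced multicurve $\rho$, so they are the same function of $\rho$, namely $\deg_{e}(\rho)$. Saying a mixed resolution ``fails to maximize both summands at once'' is therefore just a restatement of the claim that it fails to have maximal degree --- which is exactly what has to be proved; the argument is circular.

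What is missing is the actual mechanism, which is the content of the paper's short proof: $P\alpha$ and $N\alpha$ are the only two resolutions with no turn-backs (a turn-back is created whenever two adjacent intersections along $e$ are resolved in opposite directions), and the presence of a turn-back forces a strictly smaller edge degree, because the turning strand can be isotoped off $e$ inside $\Star(e)$ (possibly producing a trivial or puncture loop, which relations (3)--(4) of Definition \ref{def:curvealgebra} convert into a scalar or $v_{i}^{\pm}$ factor), strictly decreasing the intersections with $e$ and hence the corner coordinates at $a_{0},a_{s},b_{0},b_{t}$. You announce this as ``the plan'' and mention it again in passing (``in fact strictly less, since a turnback was present''), but nowhere do you establish it; the body of your proposal instead rests on the false maximization claim. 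To repair the proof, discard the corner-maximization and two-summand steps and argue directly that every resolution other than $P\alpha$ and $N\alpha$ contains a turn-back, and that removing a turn-back strictly lowers $\deg_{e}$; with that in place the lemma follows exactly as in the paper.
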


\begin{proof}
Recall that $e\alpha$ can be written as a $\CC[v_i^{\pm}]$-linear combination of $2^n$ crossingless curves.  Each crossingless curve comes from a choice of a positive or negative resolution at each of the $n$ intersections between $e$ and $\alpha$.  The distinguished resolutions $P\alpha$ and $N\alpha$ are the only two without any turn-backs (which occur when adjacent intersections are resolved in opposite ways).  For all the others, the existence of a turnback implies that their degree  is strictly smaller. 
\end{proof}

By the formulas for $P \alpha$ and $N\alpha$ in Section \ref{sec:algorithms}, we have the following lemma.
\begin{lemma}\label{lem:Pisorderpreserving}
Let $\alpha \in \RMC^{j}$. Then $\deg(P\alpha) = \deg(N\alpha) = \deg(\alpha) + j - 1$.
\end{lemma}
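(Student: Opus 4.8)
The plan is to reduce the statement $\deg(P\alpha) = \deg(N\alpha) = \deg(\alpha) + j - 1$ to a finite, purely local bookkeeping check using the explicit resolution algorithms of Section~\ref{sec:algorithms}. The key observation is that $\deg_e(\alpha)$ depends only on the four corners $a_0, a_s, b_0, b_t$ (the corners adjacent to $e$), and each of $P$ and $N$ is a composition of a ``top change'' and a ``bottom change'' that alter corner coordinates only near the respective vertex. So I would first separate the contributions: writing $\deg(\alpha) = \frac{1}{2}(\alpha(a_0) + \alpha(a_s)) + \frac{1}{2}(\alpha(b_0) + \alpha(b_t))$, I claim each half changes by exactly $\frac{1}{2}(j_v - 1) + \frac{1}{2}$ or an analogous split, where $j_v \in \{0,1\}$ records whether $\alpha$ ends at $v$ and similarly $j_w$, so that $j = j_v + j_w$. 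More precisely, I would prove the sharper local claim: the top change contributes $(\text{new } a_0) + (\text{new } a_s) - \alpha(a_0) - \alpha(a_s) = 2j_v - 1$ to twice the degree (hence $j_v - \tfrac12$ to the ``top half''), and symmetrically the bottom change contributes $2j_w - 1$; summing and halving gives $\deg(\alpha) + (j_v + j_w) - 1 = \deg(\alpha) + j - 1$.

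Concretely, I would run through the cases exactly as the algorithms are organized. For $\alpha \in \RMC^0$ and the positive resolution: Algorithm~\ref{alg:PRMC0} says $P_0^t$ leaves $a_0$ unchanged unless $m = 0$ (in which case $a_0 = a_m$ drops by $\tfrac12$, but then $a_0^R = a_s$... wait, one must track the identifications $a_s = b_0^R$ etc.), and $a_s$ is unchanged unless $m = s$. The point is that in every sub-case the net change to $\alpha(a_0) + \alpha(a_s)$ from the top change is $-1$: either one of them is the ``$m$'' corner (change $-\tfrac12$) and the adjacent opposite corner absorbs another $-\tfrac12$, or both drop, or the $-1$ comes from the $i < m$ strands — and the single impossible case $m = s$ (noted in the text) is exactly the one that would spoil this. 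Doing the same for $N_0^t$ (Algorithm~\ref{alg:NRMC0}), for the $P_1^t, N_1^t$ changes on $\RMC^2$ (Algorithms~\ref{alg:PRMC2}, \ref{alg:NRMC2}, where now the relevant ``first corner with value $-\tfrac12$'' exists precisely because $\alpha$ ends at $v$, and the net change is $+1$), and then assembling the $\RMC^1$ cases from Algorithms~\ref{alg:PNRMC1} and~\ref{alg:PNRMC1b} as a top change of one flavor and a bottom change of the other, yields all the claimed identities. I would also invoke the matching condition \eqref{eqn:locdeg=sum} to cross-check: since $\deg(P\alpha) = P\alpha(a_0) + P\alpha(b_t)$ can be computed by only looking at the top change's effect on $a_0$ and the bottom change's effect on $b_t$, the computation is even shorter from this angle.

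The main obstacle will be the corners at the two triangles adjacent to $e$, i.e. handling the identifications $b_t = a_0^R$, $a_0 = b_t^L$, $a_0^L = b_t^R$, $a_s = b_0^R$, $b_0 = a_s^L$, $b_s^R = a_0^L$ carefully, since there the top change and the bottom change can both touch the same corner and one must make sure the $\pm\tfrac12$ contributions are added, not double-counted or dropped. This is precisely the subtlety the authors flag right after Algorithm~\ref{alg:PRMC0} (``the corners $a_0, a_s, b_0, b_t$ are the only ones that could possibly be affected by both the top change and the bottom change''). Once one commits to resolving the degree contribution vertex-by-vertex — top change only moves $a$-corners, bottom change only moves $b$-corners, and $\deg$ splits as a sum over $\{a_0, b_t\}$ (or $\{a_s, b_0\}$) with one term from each — the overlap issue dissolves, because the top change's effect on $a_0$ and the bottom change's effect on $b_t$ are genuinely independent. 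So the real content is just verifying the one-variable statement ``the top change shifts $\alpha(a_0) + \alpha(a_s)$ by $2j_v - 1$'' in each of the (few) configurations of where the first zero/first $-\tfrac12$ corner sits, which is routine given the algorithms.
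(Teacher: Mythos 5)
Your overall plan---reading the degree change off the explicit top/bottom changes of Section~\ref{sec:algorithms}---is exactly what the paper intends (it states the lemma with no justification beyond ``by the formulas''), and the numerical conclusion that each vertex contributes $j_v-\tfrac12$ (resp.\ $j_w-\tfrac12$) to the degree is correct. However, the sharper local claim you reduce everything to is false as stated, and so is the ``shorter'' cross-check in your last paragraph. The top change does \emph{not} only move $a$-corners: because of the identifications $b_t=a_0^R$ and $b_0=a_s^L$, when $m=0$ the map $P_0^t$ lowers $a_0$ by $\tfrac12$ and $b_t$ by $\tfrac12$, so $\alpha(a_0)+\alpha(a_s)$ changes by $-\tfrac12$, not by $2j_v-1=-1$; and when $m=s$ it lowers $a_0$ by $1$, $a_s$ by $\tfrac12$ and raises $b_0$ by $\tfrac12$, so $\alpha(a_0)+\alpha(a_s)$ changes by $-\tfrac32$. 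Symmetrically the bottom change moves $a$-corners when $n\in\{0,t\}$ (the paper's own example after Algorithm~\ref{alg:PRMC0} has $P_0^b$ lowering $a_s$), so $\deg(P\alpha)=P\alpha(a_0)+P\alpha(b_t)$ cannot be computed ``by only looking at the top change's effect on $a_0$ and the bottom change's effect on $b_t$'': with $n=t$ the bottom change raises $a_0=b_t^L$ by $\tfrac12$.

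The repair is to abandon the split of the corner set by vertex and instead verify the invariant on the full four-corner sum: each top (resp.\ bottom) change shifts $\alpha(a_0)+\alpha(a_s)+\alpha(b_0)+\alpha(b_t)=2\deg_e(\alpha)$ by exactly $2j_v-1$ (resp.\ $2j_w-1$); in the boundary cases the $\pm\tfrac12$ that escapes the $a$-pair lands on $b_0$ or $b_t$ and is still counted. For instance, for $P_0^t$: if $0<m<s$ only $a_0$ drops by $1$; if $m=0$ then $a_0$ and $b_t$ each drop by $\tfrac12$; if $m=s$ then $a_0$ drops by $1$, $a_s$ by $\tfrac12$ and $b_0$ rises by $\tfrac12$---in all three cases the total is $-1$. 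The same three-case check for $P_1^t$, $N_0^t$, $N_1^t$ and their bottom analogues gives $+1$ when $\alpha$ ends at the relevant vertex and $-1$ otherwise, and since the top and bottom contributions add even when both touch the same corner (e.g.\ $m\ne 0,s$ and $n=0$), summing gives $\deg(P\alpha)=\deg(N\alpha)=\deg(\alpha)+j-1$ in every case. (Incidentally, the excluded configuration $m=s$, $n=0$ would not spoil this count either; it is impossible for a different reason, via the matching condition at $e$ and nonnegativity of $\RMC^0$ coordinates.) With that adjustment your case analysis is complete and is precisely the verification the paper leaves to the reader.
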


\subsection{Visualizing the $P$ and $N$ maps for degree $d$ curves}\label{ssec:changelc}

For any $\alpha \in \RMC$, let 
\[
	\pi(\alpha) := (\alpha(a_{s}), \alpha(b_{t})). 
\]
Then the algorithms from Section~\ref{sec:algorithms} imply the following lemma.  There is a similar one for the negative resolution, which we omit for brevity.

\begin{lemma}\label{lem:locdegcomputation}
\begin{enumerate}
\item For $\alpha \in \RMC^{0}$, with $m = \mathrm{min}\{i\;|\; \alpha(a_{i}) = 0\}$ and $n = \mathrm{min}\{i \;|\; \alpha(b_{i}) = 0\}$,  
\[ \pi(P \alpha) = \pi(\alpha) + 
\begin{cases} 
(0,0) 			& \mbox{ if } m \ne 0, s ,  n \ne 0, t,\\
(-\tfrac{1}{2}, 0)	 & \mbox{ if } m \ne 0, s,   n = 0, 
			 \mbox{ or  } m = s,   n \ne 0, t  \\
(0, -\tfrac{1}{2})	& \mbox{ if } m \ne 0, s,  n = t, 
			 \mbox{ or  } m = 0,  n \ne 0, t,\\
(-\tfrac{1}{2}, -\tfrac{1}{2})	& \mbox{ if } m = n= 0 \mbox{ or } m= s, n= t
\end{cases}
\]

\item For $\alpha \in \RMC^{1}$ with an endpoint at the bottom vertex $w$,  $m = \mathrm{min}\{i\;|\; \alpha(a_{i}) = 0\}$ and $n = \mathrm{min}\{i \;|\; \alpha(b_{i}) = -\frac{1}{2}\}$, 
\[ \pi(P \alpha) = \pi(\alpha) + 
\begin{cases} 
(0,1) 			& \mbox{ if } m \ne 0, s ,  n \ne 0, t,\\
(-\tfrac{1}{2}, 1)	 & \mbox{ if } m \ne 0, s,   n = 0, 
			 \mbox{ or  } m = s,   n \ne 0, t  \\
(0, \tfrac{1}{2})	& \mbox{ if } m \ne 0, s,  n = t, 
			 \mbox{ or  } m = 0,  n \ne 0, t,\\
(-\tfrac{1}{2}, \tfrac{1}{2})	& \mbox{ if } m = n= 0 \mbox{ or } m= s, n= t
\end{cases}
\]

\item For $\alpha \in \RMC^{2}$,  $m = \mathrm{min}\{i\;|\; \alpha(a_{i}) = -\frac{1}{2}\}$ and $n = \mathrm{min}\{i \;|\; \alpha(b_{i}) = -\frac{1}{2}\}$, 
\[ \pi(P \alpha) = \pi(\alpha) + 
\begin{cases} 
(1,1) 			& \mbox{ if } m \ne 0, s ,  n \ne 0, t,\\
(\tfrac{1}{2}, 1)	 & \mbox{ if } m \ne 0, s,   n = 0, 
			 \mbox{ or  } m = s,   n \ne 0, t  \\
(1, \tfrac{1}{2})	& \mbox{ if } m \ne 0, s,  n = t, 
			 \mbox{ or  } m = 0,  n \ne 0, t,\\
(\tfrac{1}{2}, \tfrac{1}{2})	& \mbox{ if } m = n= 0 \mbox{ or } m= s, n= t
\end{cases}
\]
\end{enumerate}
\end{lemma}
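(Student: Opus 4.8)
The plan is to read all three tables directly off the explicit resolution algorithms of Section~\ref{sec:algorithms}. Since $\pi(\beta) = (\beta(a_{s}), \beta(b_{t}))$ depends only on the two corners $a_{s}$ and $b_{t}$, the whole task is to track how the positive resolution changes those two corner coordinates; every other corner touched by the algorithms is irrelevant.

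First I would recall that, by Algorithms~\ref{alg:PRMC0}, \ref{alg:PNRMC1} and \ref{alg:PRMC2}, in each of the three cases $\alpha \in \RMC^{0}$, $\alpha \in \RMC^{1}_{w}$, $\alpha \in \RMC^{2}$ the resolution $P\alpha$ is a composite of a ``top change'' near $v$ (one of $P_{0}^{t}$, $P_{1}^{t}$) and a ``bottom change'' near $w$ (one of $P_{0}^{b}$, $P_{1}^{b}$), and that these two commute. The top change alters only corners at $v$ together with the pair $a_{m}^{L}, a_{m}^{R}$ opposite $a_{m}$, and the bottom change alters only corners at $w$ together with $b_{n}^{L}, b_{n}^{R}$, with $m, n$ as in the statement.

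The crucial step is to pin down exactly which terms of these operators can reach $a_{s}$ or $b_{t}$, and this is where the six corner coincidences near $e$ recorded after Figure~\ref{fig:localpicture} enter: $a_{s} = b_{0}^{R}$, $b_{0} = a_{s}^{L}$, $a_{s}^{R} = b_{0}^{L}$, $b_{t} = a_{0}^{R}$, $a_{0} = b_{t}^{L}$, $a_{0}^{L} = b_{t}^{R}$. Using local planarity, one verifies: $a_{s}$ is moved by the top change only when $m=s$ in the $\RMC^{0}$ and $\RMC^{1}_{w}$ cases, and always in the $\RMC^{2}$ case (by $+1$ via the propagating $a_{i}$-term with $i>m$, or by $+\tfrac{1}{2}$ via the $a_{m}$-term when $m=s$); $a_{s}$ is moved by the bottom change only when $n=0$, via its $b_{0}^{R}$-term (by $-\tfrac{1}{2}$); and symmetrically $b_{t}$ is moved by the top change only when $m=0$, via its $a_{0}^{R}$-term (by $-\tfrac{1}{2}$), and by the bottom change via its $b_{n}$-term when $n=t$ or, in the $\RMC^{1}_{w}$ and $\RMC^{2}$ cases, via the propagating $b_{i}$-term with $i>n$. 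Every remaining term of every operator involves a corner that local planarity makes distinct from $a_{s}$ and $b_{t}$. With this dictionary, each row of the three tables is obtained by reading the relevant increments $-1, -\tfrac{1}{2}, +\tfrac{1}{2}, +1$ off the algorithm and summing the at most two contributions to each coordinate, split according to whether $m\in\{0,s\}$ and whether $n\in\{0,t\}$. One also has to verify exhaustiveness of the listed cases: the pairs $(m,n) = (s,0)$ and $(0,t)$ cannot occur in any of the three cases, since the matching condition $\alpha(a_{0}) + \alpha(b_{t}) = \alpha(a_{s}) + \alpha(b_{0})$ at $e$, together with the minimality built into the definitions of $m$ and $n$ and the constraints on reduced multicurves from Lemma~\ref{lem:matchingcondition} and Observation~\ref{obs:matchingconditionRMC}, forces a contradiction. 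Finally, the asserted analogue for $N$ follows from the same argument run through the horizontal reflection that defines the $N$-algorithms of Section~\ref{sec:algorithms}.

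The genuine obstacle is the dictionary step: the bookkeeping around the edge $e$. Each ``boundary'' case — $m$ or $n$ hitting an endpoint index — is exactly a situation where one of the two changes spills over into the triangle $\Delta_{L}$ or $\Delta_{R}$ on the far side of $e$ and therefore donates an extra $\pm\tfrac{1}{2}$ to $a_{s}$ or $b_{t}$. Keeping these contributions straight, and checking carefully that no further coincidences among the corners $\{a_{i}, a_{i}^{L}, a_{i}^{R}\}$ and $\{b_{j}, b_{j}^{L}, b_{j}^{R}\}$ occur under local planarity, is where all the care goes; after that, every entry of the three tables is a one-line substitution.
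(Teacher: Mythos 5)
Your proposal is correct and is exactly the paper's (largely implicit) argument: the paper derives this lemma by reading the changes at $a_s$ and $b_t$ off the top/bottom-change algorithms of Section~\ref{sec:algorithms}, using the corner identifications $a_s=b_0^R$, $b_t=a_0^R$, etc., near $e$, which is precisely your dictionary, and your observation that $(m,n)=(s,0)$ and $(0,t)$ are excluded by the matching condition at $e$ matches the paper's remark on that point.
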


Fix $d \in \NN$.  For reduced multicurves of degree $d$, we visualize the change of coordinates from Lemma~\ref{lem:locdegcomputation} as in Figures \ref{fig:degchangeRMC0}, \ref{fig:degchangeRMC1}, and \ref{fig:degchangeRMC2}.  The black arrows point from $\pi(\alpha)$ to $\pi(P\alpha)$, while the red arrows point from $\pi(\alpha)$ to $\pi(N\alpha)$.  

\begin{figure}[!ht]

\begin{subfigure}[b]{0.45\textwidth}
       \begin{tikzpicture}[scale=1]
	\draw[step=0.5,gray, dashed, very thin] (0, 0) grid (6.5, 6.5);
	\draw[step=0.5,gray, dashed, very thin] (-0.5, -0.5) grid (0, 6.5);
	\draw[step=0.5,gray, dashed, very thin] (0, -0.5) grid (6.5, 0);
	\draw[line width = 1pt, gray] (0, 0) -- (0, 6);
	\draw[line width = 1pt, gray] (0, 0) -- (6, 0);
	\draw[line width = 1pt, gray] (6, 0) -- (6, 6);
	\draw[line width = 1pt, gray] (0, 6) -- (6, 6);

	\draw[thick, ->] (6, 0) -- (5.5, 0);	
	\draw[thick, ->] (6, 2) -- (5.5, 2);	
	\draw[thick, ->] (6, 4) -- (5.5, 4);	
	\draw[thick, ->] (6, 6) -- (5.5, 5.5);	
	\draw[thick, ->] (4, 6) -- (4, 5.5);
	\draw[thick, ->] (2, 6) -- (2, 5.5);
	\draw[thick, ->] (0, 6) -- (0, 5.5);
	\draw[thick, ->] (0, 4) -- (-0.5, 4);
	\draw[thick, ->] (0, 2) -- (-0.5, 2);
	\draw[thick, ->] (0, 0) -- (-0.5, 0);
	\draw[thick, ->] (0, 0) -- (-0.5, -0.5);
	\draw[thick, ->] (0, 0) -- (0, -0.5);
	\draw[thick, ->] (2, 0) -- (2, -0.5);
	\draw[thick, ->] (4, 0) -- (4, -0.5);

	\fill[thick] (0, 0) circle (2pt);
	\fill[thick] (0, 2) circle (2pt);
	\fill[thick] (0, 4) circle (2pt);
	\fill[thick] (2, 0) circle (2pt);
	\fill[thick] (4, 0) circle (2pt);
	
	\fill (4, 4) circle (2pt);
	\fill (2, 4) circle (2pt);
	\fill (4, 2) circle (2pt);
	\fill (2, 2) circle (2pt);
	
	\node at (1, -0.8) {$1$};
	\node at (2, -0.8) {$2$};
	\node at (6, -0.5) {$d$};
	\node at (-0.8, 1) {$1$};
	\node at (-0.8, 2) {$2$};
	\node at (-0.5, 6) {$d$};
	
	\node at (3, -1) {$a_{s}$};
	\node at (-1, 3) {$b_{t}$};
\end{tikzpicture}
        \caption{Black arrows go from $\pi(\alpha)$ to $\pi(P\alpha)$} \label{fig:PdegchangeRMC0}
        \end{subfigure}
 \qquad
\begin{subfigure}[b]{0.45\textwidth}
\begin{tikzpicture}[scale=1]
	\draw[step=0.5,gray, dashed, very thin] (0, 0) grid (6.5, 6.5);
	\draw[step=0.5,gray, dashed, very thin] (-0.5, -0.5) grid (0, 6.5);
	\draw[step=0.5,gray, dashed, very thin] (0, -0.5) grid (6.5, 0);
	\draw[line width = 1pt, gray] (0, 0) -- (0, 6);
	\draw[line width = 1pt, gray] (0, 0) -- (6, 0);
	\draw[line width = 1pt, gray] (6, 0) -- (6, 6);
	\draw[line width = 1pt, gray] (0, 6) -- (6, 6);

	\draw[thick, ->, red] (6, 6) to[bend right=15] (5, 5);
	
	\draw[thick, ->, red] (2, 2) -- (1, 1);
	\draw[thick, ->, red] (2, 4) -- (1, 3);
	\draw[thick, ->, red] (4, 2) -- (3, 1);
	\draw[thick, ->, red] (4, 4) -- (3, 3);
	\draw[thick, ->, red] (6, 4) -- (5, 3);
	\draw[thick, ->, red] (6, 2) -- (5, 1);
	\draw[thick, ->, red] (6, 4) -- (5.5, 3);
	\draw[thick, ->, red] (6, 2) -- (5.5, 1);
	\draw[thick, ->, red] (6, 6) -- (5.5, 5);
	\draw[thick, ->, red] (6, 6) -- (5, 5.5);
	\draw[thick, ->, red] (6, 6) -- (5.5, 5.5);
	\draw[thick, ->, red] (6, 0) -- (5, -0.5);
	\draw[thick, ->, red] (4, 0) -- (3, -0.5);
	\draw[thick, ->, red] (2, 0) -- (1, -0.5);
	\draw[thick, ->, red] (-0.05, 0) -- (-0.55, -0.5);
	\draw[thick, ->, red] (0, 2) -- (-0.5, 1);
	\draw[thick, ->, red] (0, 4) -- (-0.5, 3);
	\draw[thick, ->, red] (0, 6) -- (-0.5, 5);
	\draw[thick, ->, red] (4, 6) -- (3, 5);
	\draw[thick, ->, red] (2, 6) -- (1, 5);
	\draw[thick, ->, red] (4, 6) -- (3, 5.5);
	\draw[thick, ->, red] (2, 6) -- (1, 5.5);
	
	\node at (1, -0.8) {$1$};
	\node at (2, -0.8) {$2$};
	\node at (6, -0.5) {$d$};
	\node at (-0.8, 1) {$1$};
	\node at (-0.8, 2) {$2$};
	\node at (-0.5, 6) {$d$};
	
	\node at (3, -1) {$a_{s}$};
	\node at (-1, 3) {$b_{t}$};
\end{tikzpicture}
     \caption{Red arrows go from $\pi(\alpha)$ to $\pi(N\alpha)$}
\end{subfigure}
\caption{Change of local coordinates for $\alpha \in \RMC^{0}$.}
\label{fig:degchangeRMC0}
\end{figure}
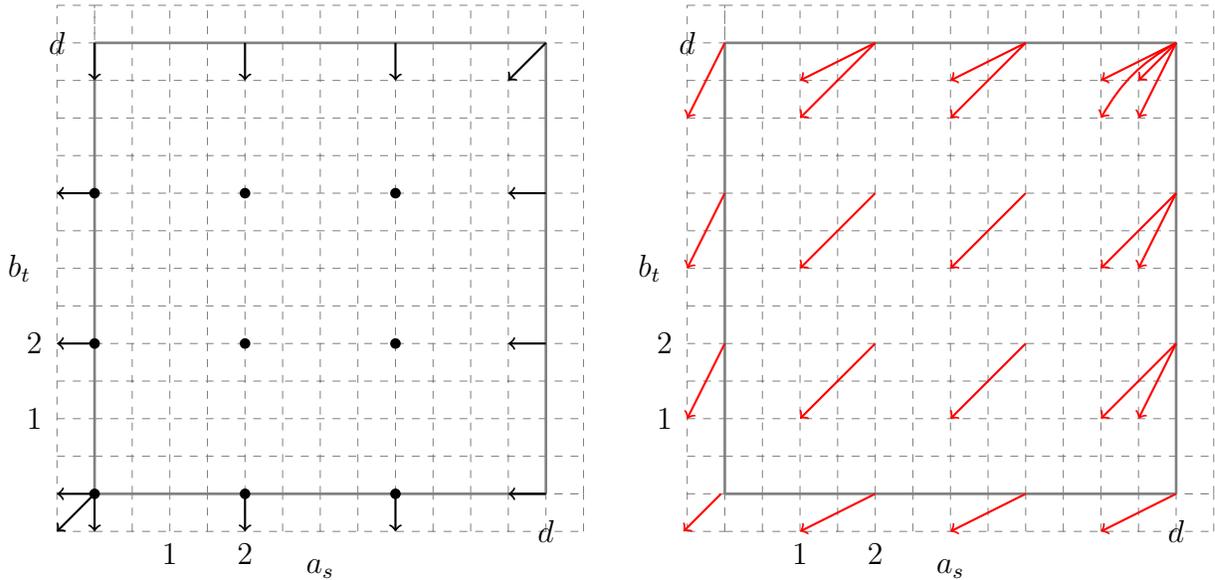

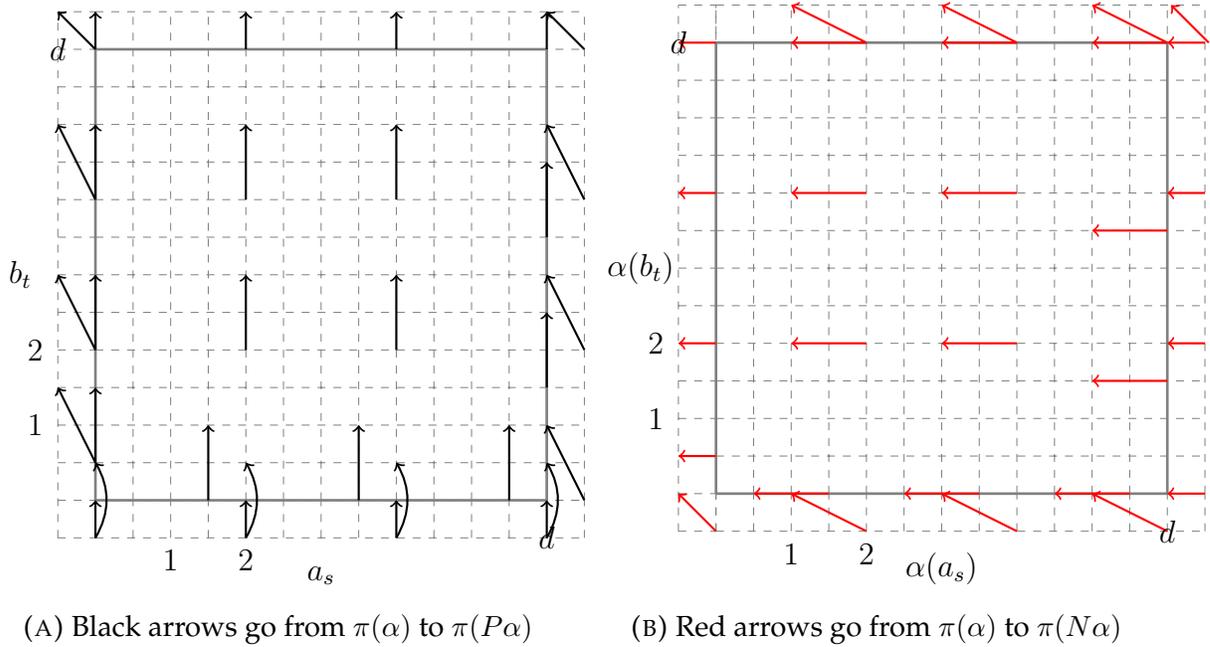
\begin{figure}[!ht]
\begin{subfigure}[b]{0.45\textwidth}
\begin{tikzpicture}[scale=1]
	\draw[step=0.5,gray, dashed, very thin] (0, 0) grid (6.5, 6.5);
	\draw[step=0.5,gray, dashed, very thin] (-0.5, -0.5) grid (0, 6.5);
	\draw[step=0.5,gray, dashed, very thin] (0, -0.5) grid (6.5, 0);
	\draw[line width = 1pt, gray] (0, 0) -- (0, 6);
	\draw[line width = 1pt, gray] (0, 0) -- (6, 0);
	\draw[line width = 1pt, gray] (6, 0) -- (6, 6);
	\draw[line width = 1pt, gray] (0, 6) -- (6, 6);

	\draw[thick, ->] (6, 1.5) -- (6, 2.5);	
	\draw[thick, ->] (6, 3.5) -- (6, 4.5);	
	\draw[thick, ->] (6, 6) -- (6, 6.5);	
	\draw[thick, ->] (4, 6) -- (4, 6.5);
	\draw[thick, ->] (2, 6) -- (2, 6.5);
	\draw[thick, ->] (0, 6) -- (0, 6.5);
	\draw[thick, ->] (0, 6) -- (-0.5, 6.5);
	\draw[thick, ->] (0, 4) -- (-0.5, 5);
	\draw[thick, ->] (0, 4) -- (0, 5);
	\draw[thick, ->] (0, 2) -- (-0.5, 3);
	\draw[thick, ->] (0, 2) -- (0, 3);

	\draw[thick, ->] (0, 0.5) -- (-0.5, 1.5);
	\draw[thick, ->] (0, 0.5) -- (0, 1.5);
	\draw[thick, ->] (0, -0.5) -- (0, 0);
	\draw[thick, ->] (0, -0.5) to[bend right] (0, 0.5);
	\draw[thick, ->] (1.5, 0) -- (1.5, 1);
	\draw[thick, ->] (2, -0.5) -- (2, 0);
	\draw[thick, ->] (2, -0.5) to[bend right] (2, 0.5);
	\draw[thick, ->] (3.5, 0) -- (3.5, 1);
	\draw[thick, ->] (4, -0.5) -- (4, 0);
	\draw[thick, ->] (4, -0.5) to[bend right] (4, 0.5);
	\draw[thick, ->] (5.5, 0) -- (5.5, 1);
	
	\draw[thick, ->] (4, 4) -- (4, 5);
	\draw[thick, ->] (2, 4) -- (2, 5);
	\draw[thick, ->] (2, 2) -- (2, 3);
	\draw[thick, ->] (4, 2) -- (4, 3);
	\draw[thick, ->] (6, -0.5) -- (6, 0);
	\draw[thick, ->] (6, -0.5) to[bend right] (6, 0.5);

	\draw[thick, ->] (6.5, 0) -- (6, 1);
	\draw[thick, ->] (6.5, 2) -- (6, 3);
	\draw[thick, ->] (6.5, 4) -- (6, 5);
	\draw[thick, ->] (6.5, 6) -- (6, 6.5);
	
	\node at (1, -0.8) {$1$};
	\node at (2, -0.8) {$2$};
	\node at (6, -0.5) {$d$};
	\node at (-0.8, 1) {$1$};
	\node at (-0.8, 2) {$2$};
	\node at (-0.5, 6) {$d$};
	
	\node at (3, -1) {$a_{s}$};
	\node at (-1, 3) {$b_{t}$};
\end{tikzpicture}
  \caption{Black arrows go from $\pi(\alpha)$ to $\pi(P\alpha)$}
\end{subfigure}
\quad
\begin{subfigure}[b]{0.45\textwidth}
\begin{tikzpicture}[scale=1]
	\draw[step=0.5,gray, dashed, very thin] (0, 0) grid (6.5, 6.5);
	\draw[step=0.5,gray, dashed, very thin] (-0.5, -0.5) grid (0, 6.5);
	\draw[step=0.5,gray, dashed, very thin] (0, -0.5) grid (6.5, 0);
	\draw[line width = 1pt, gray] (0, 0) -- (0, 6);
	\draw[line width = 1pt, gray] (0, 0) -- (6, 0);
	\draw[line width = 1pt, gray] (6, 0) -- (6, 6);
	\draw[line width = 1pt, gray] (0, 6) -- (6, 6);
	
	\draw[thick, ->, red] (6, 6) -- (5, 6);
	\draw[thick, ->, red] (6, 6) -- (5, 6.5);
	\draw[thick, ->, red] (4, 6) -- (3, 6);
	\draw[thick, ->, red] (4, 6) -- (3, 6.5);
	\draw[thick, ->, red] (2, 6) -- (1, 6);
	\draw[thick, ->, red] (2, 6) -- (1, 6.5);
	\draw[thick, ->, red] (0, 6) -- (-0.5, 6);
	\draw[thick, ->, red] (6.5, 6) -- (6, 6);
	\draw[thick, ->, red] (6.55, 6) -- (6.05, 6.5);

	\draw[thick, ->, red] (6, 3.5) -- (5, 3.5);
	\draw[thick, ->, red] (4, 4) -- (3, 4);
	\draw[thick, ->, red] (2, 4) -- (1, 4);
	\draw[thick, ->, red] (0, 4) -- (-0.5, 4);
	\draw[thick, ->, red] (6.5, 4) -- (6, 4);

	\draw[thick, ->, red] (6, 1.5) -- (5, 1.5);
	\draw[thick, ->, red] (4, 2) -- (3, 2);
	\draw[thick, ->, red] (2, 2) -- (1, 2);
	\draw[thick, ->, red] (0, 2) -- (-0.5, 2);
	\draw[thick, ->, red] (6.5, 2) -- (6, 2);

	\draw[thick, ->, red] (6.5, 0) -- (6, 0);
	\draw[thick, ->, red] (5.5, 0) -- (4.5, 0);
	\draw[thick, ->, red] (3.5, 0) -- (2.5, 0);
	\draw[thick, ->, red] (1.5, 0) -- (0.5, 0);
	\draw[thick, ->, red] (0, 0.5) -- (-0.5, 0.5);

	\draw[thick, ->, red] (6, -0.5) -- (5, 0);
	\draw[thick, ->, red] (4, -0.5) -- (3, 0);
	\draw[thick, ->, red] (2, -0.5) -- (1, 0);
	\draw[thick, ->, red] (0, -0.5) -- (-0.5, 0);
	
	\node at (1, -0.8) {$1$};
	\node at (2, -0.8) {$2$};
	\node at (6, -0.5) {$d$};
	\node at (-0.8, 1) {$1$};
	\node at (-0.8, 2) {$2$};
	\node at (-0.5, 6) {$d$};
	
	\node at (3, -1) {$\alpha(a_{s})$};
	\node at (-1, 3) {$\alpha(b_{t})$};
\end{tikzpicture}
  \caption{Red arrows go from $\pi(\alpha)$ to $\pi(N\alpha)$}
\end{subfigure}
\caption{Change of local coordinates for $\alpha \in \RMC^{1}$ with an endpoint at the bottom vertex $w$}
\label{fig:degchangeRMC1}
\end{figure}

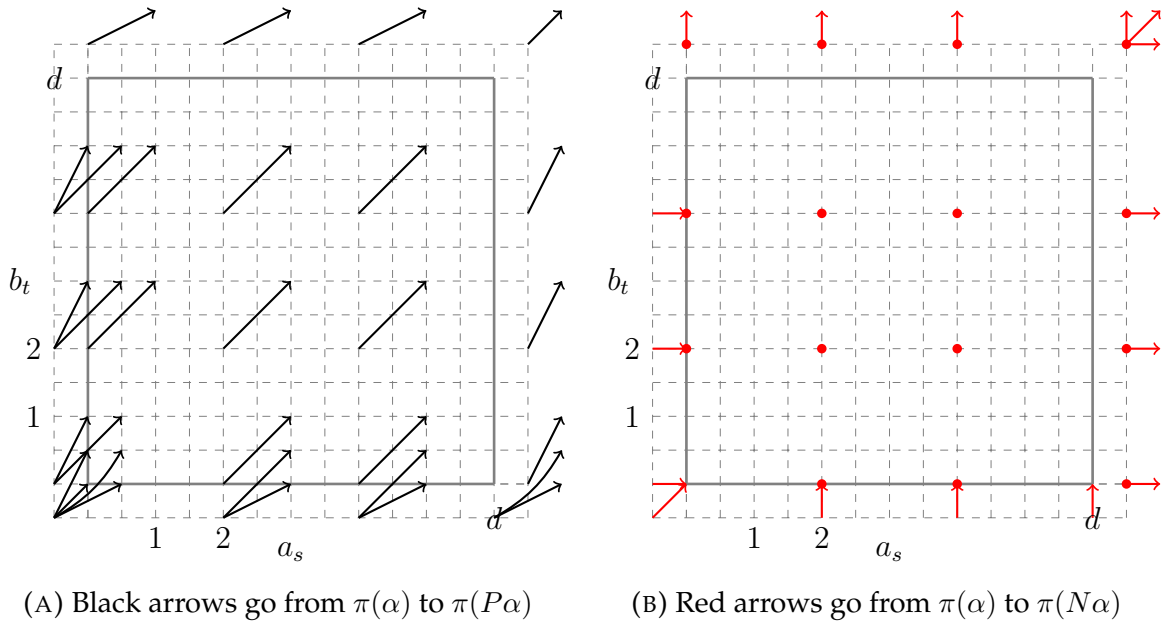
\begin{figure}[!ht]
\begin{subfigure}[b]{0.45\textwidth}
\begin{tikzpicture}[scale=0.9]
	\draw[step=0.5,gray, dashed, very thin] (0, 0) grid (6.5, 6.5);
	\draw[step=0.5,gray, dashed, very thin] (-0.5, -0.5) grid (0, 6.5);
	\draw[step=0.5,gray, dashed, very thin] (0, -0.5) grid (6.5, 0);
	\draw[line width = 1pt, gray] (0, 0) -- (0, 6);
	\draw[line width = 1pt, gray] (0, 0) -- (6, 0);
	\draw[line width = 1pt, gray] (6, 0) -- (6, 6);
	\draw[line width = 1pt, gray] (0, 6) -- (6, 6);

	\draw[thick, ->] (6.5, 6.5) -- (7, 7);
	\draw[thick, ->] (4, 6.5) -- (5, 7);
	\draw[thick, ->] (2, 6.5) -- (3, 7);
	\draw[thick, ->] (0, 6.5) -- (1, 7);

	\draw[thick, ->] (6.5, 4) -- (7, 5);
	\draw[thick, ->] (4, 4) -- (5, 5);
	\draw[thick, ->] (2, 4) -- (3, 5);
	\draw[thick, ->] (0, 4) -- (1, 5);
	\draw[thick, ->] (-0.5, 4) -- (0.5, 5);
	\draw[thick, ->] (-0.5, 4) -- (0, 5);

	\draw[thick, ->] (6.5, 2) -- (7, 3);
	\draw[thick, ->] (4, 2) -- (5, 3);
	\draw[thick, ->] (2, 2) -- (3, 3);
	\draw[thick, ->] (0, 2) -- (1, 3);
	\draw[thick, ->] (-0.5, 2) -- (0.5, 3);
	\draw[thick, ->] (-0.5, 2) -- (0, 3);

	\draw[thick, ->] (6.5, 0) -- (7, 1);
	\draw[thick, ->] (4, 0) -- (5, 1);
	\draw[thick, ->] (2, 0) -- (3, 1);
	\draw[thick, ->] (-0.5, 0) -- (0.5, 1);
	\draw[thick, ->] (-0.5, 0) -- (0, 1);

	\draw[thick, ->] (6, -0.5) -- (7, 0);
	\draw[thick, ->] (6, -0.5) to[bend right=15] (7, 0.5);
	\draw[thick, ->] (4, -0.5) -- (5, 0);
	\draw[thick, ->] (4, -0.5) -- (5, 0.5);
	\draw[thick, ->] (2, -0.5) -- (3, 0);
	\draw[thick, ->] (2, -0.5) -- (3, 0.5);
	\draw[thick, ->] (-0.5, -0.5) -- (0.5, 0);
	\draw[thick, ->] (-0.5, -0.5) to[bend right=15] (0.5, 0.5);
	\draw[thick, ->] (-0.5, -0.5) -- (0, 0.5);
	\draw[thick, ->] (-0.5, -0.5) -- (0, 0);

	\node at (1, -0.8) {$1$};
	\node at (2, -0.8) {$2$};
	\node at (6, -0.5) {$d$};
	\node at (-0.8, 1) {$1$};
	\node at (-0.8, 2) {$2$};
	\node at (-0.5, 6) {$d$};
	
	\node at (3, -1) {$a_{s}$};
	\node at (-1, 3) {$b_{t}$};
\end{tikzpicture}
  \caption{Black arrows go from $\pi(\alpha)$ to $\pi(P\alpha)$}
  \end{subfigure}
  \quad
\begin{subfigure}[b]{0.45\textwidth}  
\begin{tikzpicture}[scale=0.9]
	\draw[step=0.5,gray, dashed, very thin] (0, 0) grid (6.5, 6.5);
	\draw[step=0.5,gray, dashed, very thin] (-0.5, -0.5) grid (0, 6.5);
	\draw[step=0.5,gray, dashed, very thin] (0, -0.5) grid (6.5, 0);
	\draw[line width = 1pt, gray] (0, 0) -- (0, 6);
	\draw[line width = 1pt, gray] (0, 0) -- (6, 0);
	\draw[line width = 1pt, gray] (6, 0) -- (6, 6);
	\draw[line width = 1pt, gray] (0, 6) -- (6, 6);

	\draw[thick, ->, red] (6.5, 6.5) -- (7, 7);
	\draw[thick, ->, red] (6.5, 6.5) -- (7, 6.5);
	\draw[thick, ->, red] (6.5, 6.5) -- (6.5, 7);
	\fill[thick, red] (6.5, 6.5) circle (2pt);
	\draw[thick, ->, red] (4, 6.5) -- (4, 7);
	\fill[thick, red] (4, 6.5) circle (2pt);
	\draw[thick, ->, red] (2, 6.5) -- (2, 7);
	\fill[thick, red] (2, 6.5) circle (2pt);
	\draw[thick, ->, red] (0, 6.5) -- (0, 7);
	\fill[thick, red] (0, 6.5) circle (2pt);
	
	\draw[thick, ->, red] (6.5, 4) -- (7, 4);
	\fill[thick, red] (6.5, 4) circle (2pt);
	\fill[red] (4, 4) circle (2pt);
	\fill[red] (2, 4) circle (2pt);
	\fill[red] (0, 4) circle (2pt);
	\draw[thick, ->, red] (-0.5, 4) -- (0, 4);

	\draw[thick, ->, red] (6.5, 2) -- (7, 2);
	\fill[thick, red] (6.5, 2) circle (2pt);
	\fill[red] (4, 2) circle (2pt);
	\fill[red] (2, 2) circle (2pt);
	\fill[red] (0, 2) circle (2pt);
	\draw[thick, ->, red] (-0.5, 2) -- (0, 2);

	\draw[thick, ->, red] (6.5, 0) -- (7, 0);
	\fill[thick, red] (6.5, 0) circle (2pt);
	\fill[red] (4, 0) circle (2pt);
	\fill[red] (2, 0) circle (2pt);
	\draw[thick, ->, red] (-0.5, 0) -- (0, 0);

	\draw[thick, ->, red] (6, -0.5) -- (6, 0);
	\draw[thick, ->, red] (4, -0.5) -- (4, 0);
	\draw[thick, ->, red] (2, -0.5) -- (2, 0);
	\draw[thick, ->, red] (-0.5, -0.5) -- (0, 0);
	
	\node at (1, -0.8) {$1$};
	\node at (2, -0.8) {$2$};
	\node at (6, -0.5) {$d$};
	\node at (-0.8, 1) {$1$};
	\node at (-0.8, 2) {$2$};
	\node at (-0.5, 6) {$d$};
	
	\node at (3, -1) {$a_{s}$};
	\node at (-1, 3) {$b_{t}$};
\end{tikzpicture}
 \caption{Red arrows go from $\pi(\alpha)$ to $\pi(N\alpha)$}
\end{subfigure}
\caption{Change of local coordinates for $\RMC^{2}$.}
\label{fig:degchangeRMC2}
\end{figure}

First,  observe that each diagram is supported inside the  square $\left([-\frac{1}{2},d +\frac{1}{2}] \cap \frac{1}{2}\ZZ\right)^{2}$.  This is because $\alpha(a_{s}), \alpha(b_{t}) \geq -\frac{1}{2}$, and $d = \alpha(a_{s}) +\alpha(b_{t}) $.  

Moreover, some points inside the square $\left([-\frac{1}{2}, d+\frac{1}{2}] \cap \frac{1}{2}\ZZ\right)^{2}$ do not correspond to a reduced multicurve.  In the $\RMC^{0}$ case depicted in Figure \ref{fig:degchangeRMC0}, $\alpha$ cannot have negative corner coordintes.  So there are no arrows based along the $\alpha(a_{s}) = -\frac{1}{2}$ and $\alpha(b_{t})= - \frac{1}{2}$ lines.  In the  $\RMC^{1}$ case depicted in Figure \ref{fig:degchangeRMC1}, we assume that $\alpha$ has an endpoint at the bottom vertex $w$, and thus there cannot be arrows based along the $\alpha(a_{s}) =-\frac{1}{2}$ line.   

Lastly, notice that when one of the two coordinates is at most zero or at least~$d$ (that is, along the edges of the square in Figures \ref{fig:degchangeRMC0}, \ref{fig:degchangeRMC1}, and \ref{fig:degchangeRMC2}), there can be multiple arrows based at $\pi(\alpha)$.  In particular, the change of corner coordinates at those extremal points does not depend only on $\pi(\alpha)$, but also on the actual curve class $\alpha$.  Figure \ref{fig:localdegreechange} illustrates examples where $\pi(\alpha) = \pi(\alpha')$, but $\pi(P \alpha) \neq \pi(P\alpha')$.  

\begin{figure}[!ht]
\includegraphics[width=0.57\textwidth]{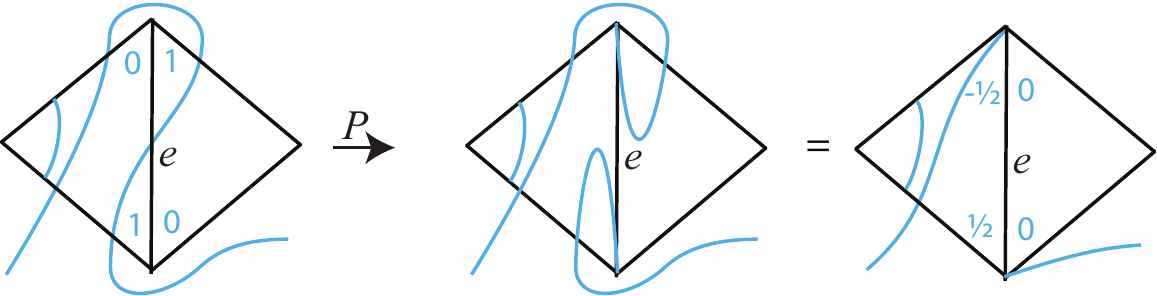}
\smallskip

\includegraphics[width=0.57\textwidth]{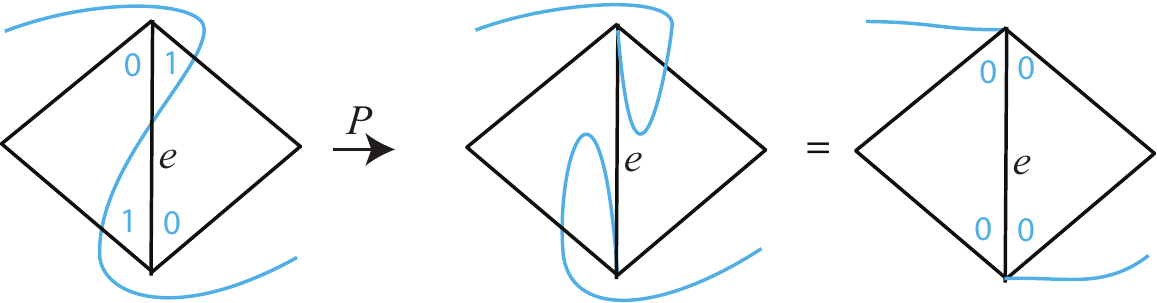}
\caption{edge degree can depend on the curve class.  Pictured are examples such that $\pi(\alpha) = \pi(\alpha')$ but $\pi(P\alpha) \neq \pi(P\alpha')$}\label{fig:localdegreechange}
\end{figure}

\subsection{Positive and Negative Resolutions are Injective}\label{ssec:positiveresolution}

We are now ready to prove the main result of this section. 
\begin{proposition}\label{prop:Pisinjective}
When $\Sigma$ is locally planar, the positive resolution map $P : \RMC^{j} \to \RMC^{2-j}$ and the negative resolution map $N : \RMC^{j} \to \RMC^{2-j}$ are injective for $j = 0, 1, 2$. 
\end{proposition}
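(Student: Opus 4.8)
The plan is to construct explicit left-inverses for $P$ and $N$ on each of $\RMC^0$, $\RMC^1$, $\RMC^2$. The starting observation, from local planarity (Section~\ref{sec:algorithms}), is that $P\alpha$ and $\alpha$ have the same generalized corner coordinates at every corner \emph{outside} $\Star(e)$, and that $\phi$ is injective on $\NMC$ (established in Section~\ref{sec:generalizedcornercoord}). Hence it is enough to recover $\alpha|_{\Star(e)}$ from $P\alpha|_{\Star(e)}$ (and likewise for $N$). By Algorithms~\ref{alg:PRMC0}--\ref{alg:PNRMC1b} the transformation on $\Star(e)$ factors as a ``top change'' near $v$ composed with a ``bottom change'' near $w$; these commute, and the only corners altered by both are $a_0, a_s, b_0, b_t$. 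I would first dispose of this overlap: at each of those four corners the top and bottom contributions are each one of $0, \pm\tfrac12, \pm1$ and are nonzero only when the relevant pivot index equals $0$ or its maximal value, so the two contributions separate once the pivots are known. Thus the problem reduces to inverting a single elementary operation near a single vertex.

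Each elementary operation ($P_0^t, P_1^t, N_0^t, N_1^t$ and the bottom analogues) is, once its pivot index $m$ is specified, simply an affine shift of the coordinate vector on the corners $a_0,\dots,a_s,a_m^L,a_m^R$, hence invertible by an equally explicit affine shift; so the one real point is to read off $m$ from the transformed vector, and this is where I expect the bulk of the work. For $P$ on $\RMC^0$: by minimality of $m$ and condition (5) of Observation~\ref{obs:matchingconditionRMC}, $\alpha(a_i)$ is a positive integer for $i<m$ and $\alpha(a_m)=0$, so after $P_0^t$ one has $P\alpha(a_i)=\alpha(a_i)-1\ge 0$ for $i<m$ while $P\alpha(a_m)=-\tfrac12<0$; consequently $a_m$ is recognized as the first corner counterclockwise from $e$ at which $P\alpha$ is negative, and $\alpha$ is then reconstructed by adding $1$ back at $a_0,\dots,a_{m-1}$ and the appropriate halves at $a_m,a_m^L,a_m^R$. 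For $P$ on $\RMC^2$ (where $P\alpha\in\RMC^0$ has only nonnegative integer top coordinates) the signal is instead the location where the block of coordinates that were incremented by $1$ begins, which is forced by the matching condition together with $\alpha(a_m)=-\tfrac12$ and $\alpha(a_i)\in\ZZ$ for $i<m$. For $P$ on $\RMC^1$ one combines an $\RMC^0$-type analysis at one vertex with an $\RMC^2$-type analysis at the other, exactly as in Algorithm~\ref{alg:PNRMC1}. The case of $N$ follows verbatim by reflecting $\Star(e)$ across $e$, which interchanges the clockwise and counterclockwise orderings and turns Algorithms~\ref{alg:PRMC0}--\ref{alg:PRMC2} into Algorithms~\ref{alg:NRMC0}--\ref{alg:NRMC2}; the exceptional subcase $e\subseteq\alpha$ inside $\RMC^2$ is checked directly from Figures~\ref{fig:pos res e=alpha} and \ref{fig:neg res e=alpha}.

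The hard part, I expect, will be the bookkeeping at the extremal pivot values $m\in\{0,s\}$ and $n\in\{0,t\}$: precisely there, as noted after Figure~\ref{fig:localdegreechange}, the coordinate change depends on the full curve class and not merely on $\pi(\alpha)$, and it is exactly the doubly-affected corners $a_0,a_s,b_0,b_t$ that are in play, so the naive rule ``the pivot is where $P\alpha$ first goes negative'' can fail and one must instead track the entire corner-coordinate vector on $\Star(e)$. I would handle these boundary combinations one at a time, recovering the interior pivots first and then using conditions (1)--(3) of Lemma~\ref{lem:matchingcondition}, condition (5) of Observation~\ref{obs:matchingconditionRMC}, and the excluded combinations noted in the algorithms (e.g.\ $m=s$, $n=0$ is impossible for $\RMC^0$) to pin down the remaining possibilities; Lemmas~\ref{lem:Pisorderpreserving} and \ref{lem:locdegcomputation} give the consistency checks ($\deg(P\alpha)=\deg(\alpha)+j-1$ and the recorded value of $\pi(P\alpha)$) that cut the case list down. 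Once uniqueness of the pivots is secured in all cases, injectivity of $P$ and $N$ on each $\RMC^j$ is immediate.
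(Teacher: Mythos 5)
Your reduction to $\Star(e)$ and the idea of inverting the top/bottom changes is reasonable in outline, but the step on which everything rests is wrong as stated. You claim that for $\alpha\in\RMC^{0}$, minimality of $m$ together with condition (5) of Observation~\ref{obs:matchingconditionRMC} forces $\alpha(a_{i})$ to be a \emph{positive integer} for $i<m$, so that $P\alpha(a_{i})=\alpha(a_{i})-1\ge 0$ and the pivot $a_{m}$ is detected as the first corner counterclockwise from $e$ where $P\alpha$ is negative. Membership in $\RMC^{0}$ only forbids endpoints of $\alpha$ at $v$ and $w$; it does not make the triangles around $v$ integral, because arcs of $\alpha$ may end at the \emph{other} vertices of $\Star(e)$ (or an outer edge may be a component), and then $\alpha(a_{i})$ can equal $\tfrac12$ for $i<m$, giving $P\alpha(a_{i})=-\tfrac12$ before the pivot. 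Concretely, write $\Delta_{1}$ for the triangle carrying $a_{1}$, with vertices $v,u_{L},z$, and let $\alpha$ be the single arc that starts at $z$, crosses the spoke $vu_{L}$, cuts the corner $a_{0}$, crosses $e$ once, and ends at the third vertex $u_{R}$ of $\Delta_{R}$. Then $\alpha\in\RMC^{0}$ with $\alpha(a_{0})=1$, $\alpha(a_{1})=\tfrac12$, $\alpha(a_{2})=0$, so $m=2$; geometrically $P\alpha$ is the union of the edges $vz$ and $wu_{R}$, and its top corner coordinates are $P\alpha(a_{1})=P\alpha(a_{2})=-\tfrac12$, exactly as Algorithm~\ref{alg:PRMC0} predicts ($\alpha(a_{1})-1$ and $\alpha(a_{2})-\tfrac12$). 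The first negative corner of $P\alpha$ is $a_{1}$, not $a_{m}$, so your recovery rule returns the wrong pivot; and if the valence of $v$ is at least $5$ this happens at an interior pivot, so it is not among the extremal cases $m\in\{0,s\}$, $n\in\{0,t\}$ to which you confine the expected difficulties.

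The point is that the coordinate vector of $P\alpha$ on $\Star(e)$ alone does not determine the pivot; one has to track the triangle types I/II/III (e.g.\ whether a $-\tfrac12$ of $P\alpha$ arises from an arc crossing the opposite edge or from an edge component), and this type bookkeeping is exactly where the content of the proposition lies. It is also exactly what the paper's proof does: rather than building an inverse, it takes $\alpha,\alpha'$ with $P\alpha=P\alpha'$, first forces $\deg_{e}(\alpha)=\deg_{e}(\alpha')$ and equality at the four corners $a_{0},a_{s},b_{0},b_{t}$ using $\pi$ and type arguments, and then runs a first-disagreement induction around $v$ and $w$ over the possible changes $0,\pm\tfrac12,\pm1$, ruling out each mismatch by a type I/II/III contradiction. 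Your proposal defers precisely this analysis (``handle these boundary combinations one at a time''), and the part it does make explicit is based on the false integrality claim above; so as written there is a genuine gap. (A smaller issue: in the exceptional case $e\subseteq\alpha$ one has $N\alpha=-2(\alpha\setminus e)$, so the map $N$ on that locus is not simply the mirror image of the $P$-algorithm and needs its own convention and check.)
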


\begin{proof}
We prove the case of $P$ only, as the proof for $N$ is identical. The proofs for $j = 0, 1, 2$ are slightly different. 

Let  $\alpha, \alpha'$ such that $P\alpha = P\alpha'$. The positive resolution map $P$ affects only $\Star(e)$.  Indeed, $P \alpha$ is completely determined by the coordinates of $\alpha$ at the corners around $v$ and $w$,which are $a_0, \ldots, a_s$ and $b_0, \ldots, b_t$ respectively. So $\alpha$ and $\alpha'$ must agree at all corners, if they agree at $a_{0}, a_{1}, \cdots, a_{s}, b_{0}, b_{1}, \cdots, b_{t}$.

\noindent \underline{Case $j = 0$, with $P: \RMC^{0} \to \RMC^{2}$} 

Let $\alpha, \alpha' \in \RMC^{0}$. We start with the four corners $a_0$, $ a_s$, $b_0$, and $ b_t$ nearest $e$.    If $\deg(\alpha) \neq \deg(\alpha')$, then Lemma \ref{lem:Pisorderpreserving} implies $\deg(P\alpha) \neq \deg(P\alpha')$, so that $P\alpha \ne P\alpha'$.  Thus let $d = \deg(\alpha) = \deg(\alpha')$.

Suppose that $\alpha(a_{s}) \neq \alpha'(a_{s})$, and without loss of generality say $\alpha(a_{s}) > \alpha'(a_{s})$.  For the corner $a_{s}$ in the $\RMC^{0}$ case, $P$ decreases the corner coordinate at $a_{s}$ by $0$ or $\frac{1}{2}$ (Figure \ref{fig:degchangeRMC0}).   From $P\alpha = P\alpha'$, it follows that $P$ decreases $\alpha$ at $a_s$ by $\frac{1}{2}$, but leaves  $\alpha'$ at $a_s$ unchanged.  

Because $P$ decreases $\alpha$ at $a_s$ by $\frac{1}{2}$, one of two scenarios are possible. Either the decrease is caused by the top change when  $m = s$, or it is caused by the bottom change when $n = 0 $ and  $b_0^{R} = a_s$.  In both scenarios, $\alpha$ is zero at one of $a_{s}$ or $b_{0}$.  So $\alpha$ is integral, and $P \alpha$ is half-integral, in the triangle containing those two corners.  
On the other hand, because $P\alpha = P \alpha'$ and $P$ leaves the corner number of $\alpha'$ at $a_s$ unchanged, $\alpha'$ must be half-integral in the triangle containing $a_{s}$ and $b_{0}$.  Type III is ruled out, since $\alpha' \in \RMC^{0}$ means $\alpha'$ cannot have an endpoint at either $v$ or $w$.     Thus $\alpha'$ is type II.  After applying positive resolutions, we see that $P \alpha$ must be type II, and $P \alpha'$ type III in that triangle.  Again this contradicts $P \alpha = P \alpha'$. We thus deduce that $\alpha(a_{s}) = \alpha'(a_{s})$.

 We take a moment to remark that the first part of our case analysis above can be seen visually using Figure \ref{fig:PdegchangeRMC0}.  Each arrow in the figure depicts how the positive resolution map $P$ affects the corner coordinates at $a_s$ and $b_t$.   Since  $P$ decreases $\alpha$ at $a_s$ by $\frac{1}{2}$, the $x$-coordinates of the arrows in Figure \ref{fig:PdegchangeRMC0} are either the same or go down by $\frac{1}{2}$.   Furthermore, identifying $\alpha$ and $\alpha'$ satisfying $\alpha(a_{s}) > \alpha'(a_{s})$ and $P \alpha(a_{s}) = P\alpha'(a_{s})$ corresponds to finding pairs of arrows which start at different $x$-coordinates and land at the same spot.  Such pairs of arrows occur only on the far right of the figure,  where $\alpha(a_{s}) = d$, $\alpha(b_{0}) = 0$, and  $\alpha'(a_{s}) = d- \frac{1}{2}$, $\alpha'(b_{0}) = \frac{1}{2}$.  The proof can be finished as before, by analyzing triangle type.

A nearly identical argument  proves that $\alpha(b_{t}) = \alpha'(b_{t})$. And because $\deg(\alpha) = \deg(\alpha')$, the identity in Equation~\ref{eqn:locdeg=sum} implies that  $\alpha$ and $\alpha'$ agree on all four corners.  

We now consider the remaining corners, beginning with the ones around $v$.   Suppose that $\alpha$ and $\alpha'$ disagree at some corner around  $v$ which is not $a_0$ or $a_s$.  Let  $k$ be the smallest index such that $\alpha(a_{k}) \ne \alpha'(a_{k})$, and without loss of generality, say $\alpha(a_{k}) > \alpha'(a_{k})$.  Let $m$ and $m'$ be as in Algorithm \ref{alg:PRMC0}; namely, let $m := \min\{i\;|\; \alpha(a_{i}) = 0\}$ and $m' := \min\{i \;|\; \alpha'(a_{i}) = 0\}$.  Taking the  positive resolution  $P$ can cause the corner coordinate at $a_k$ to decrease by $0, \frac{1}{2}$ or $1$, and so there are three cases to consider.

In the first case,  $P$ decreases $\alpha$ at $a_{k}$ by 1, and $P$ decreases $\alpha'$ at $a_{k}$ by $\frac{1}{2}$.  This implies (respectively) that  $m > k$, and that $k = m'$ with $\alpha'(a_{k}) = 0$ and $P \alpha'(a_{k}) = - \frac{1}{2}$.  Since $P \alpha = P \alpha'$, it follows that $\alpha(a_{k}) = \frac{1}{2}$.   Notice that $\alpha'$ is integral in the triangle containing $a_{k}$, and hence is type I.  On the other hand,  $\alpha$ is half-integral in the triangle containing $a_{k}$.  Type III is ruled out, since $\alpha(a_{k}) = \frac{1}{2}$ would force the previous angle to have $\alpha(a_{k-1}) = 0$, contradicting that $m >k$.  
Indeed,  $\alpha$ must be type II, with one end at the corner to the left of $a_{k}$.   After applying positive resolutions,  we see that $P \alpha$ must be type III, whereas  $P\alpha'$ is type II. This contradicts $P \alpha = P \alpha'$.  

In the second case,  $P$ decreases $\alpha$ at $a_k$ by 1, and $P$ leaves the corner number of $\alpha'$ at $a_{k}$ unchanged.   This occurs only if  $m'< k<m$.   Since $k$ was the smallest index where $\alpha$ and $\alpha'$ disagreed, we have that $\alpha(a_{m'}) = \alpha'(a_{m'}) = 0$.   But this contradicts that $a_m$ was the first corner where $\alpha$ is zero. 

In the third case, $P$ decreases $\alpha$ at $a_k$ by $\frac{1}{2}$, and $P$ leaves the corner number of $\alpha'$ at $a_{k}$ unchanged.    The first can occur only if $k = m$ with $\alpha(a_{k}) = 0$ and $P \alpha(a_{k}) = - \frac{1}{2}$.  From $P \alpha = P \alpha'$, it follows that $\alpha'(a_{k}) = - \frac{1}{2}$.  But  $\alpha'$ cannot have negative corner coordinate, since it is in $\RMC^{0}$ and cannot have an endpoint at $v$.  

By repeating the argument above for the corners around $w$, we see that $\alpha$ and $\alpha'$ must agree at the corners around $w$.   We thus conclude that $\alpha= \alpha'$.  

\noindent \underline{Case $j =1$, with $P: \RMC^{1} \to \RMC^{1}$}

Let $\alpha, \alpha' \in \RMC^{1}$ such that $P\alpha = P\alpha'$.   Both $\alpha, \alpha'$ must have an endpoint at the same vertex, so which we assume without loss of generality to be $w$.   As in the previous case, we have $d = \deg(\alpha) = \deg(\alpha')$.   By Lemma \ref{lem:Pisorderpreserving}, $d = \deg(P\alpha) = \deg(P\alpha')$ as well. 

Suppose that $\alpha(a_{s}) > \alpha'(a_{s})$.  Since $P \alpha = P \alpha'$, and $P$ can decrease the corner coordinate at $a_{s}$ only by 0 or $\frac{1}{2}$, it follows that  $P$ decreases $\alpha$ at $a_{s}$ by $\frac{1}{2}$, but leaves $\alpha'$ at $a_{s}$ unchanged. We look in  Figure \ref{fig:degchangeRMC1} for pairs of arrows that start at different $x$-coordinates, but land at the same spot.  These are found only on the far right, when $\alpha(a_{s}) = d + \frac{1}{2}$, $\alpha(b_{0}) = - \frac{1}{2}$ and $\alpha'(a_{s}) = d$, $\alpha'(b_{0}) = 0$.  Figure~\ref{fig:localdegreechange} illustrates such a scenario for $d = 0$.  Notice that because $\alpha'$ has an endpoint at $w$ and $\alpha'(b_{0}) = 0$, $\alpha'(b_{j}) = -\frac{1}{2}$ for some $0 < j \le t$. After taking $P$, $P\alpha'(b_{j}) = 0$. On the other hand, $\alpha(b_{0}) = - \frac{1}{2}$ implies that $P\alpha(b_{i}) > 0$ for all $0 < i \le t$. Thus, $\alpha(a_{s}) > \alpha'(a_{s})$ implies $P\alpha \neq P \alpha'$. 

Next suppose $\alpha(b_{t}) > \alpha'(b_{t})$. $P$ can increase the corner coordinate at $b_{t}$ by $\frac{1}{2}$ or 1.  As before, we can look in Figure \ref{fig:degchangeRMC1}, to find that necessarily
 $\alpha(b_{t}) = d$ and $\alpha'(b_{t})  = d - \frac{1}{2}$. Suppose $d > 0$. Since $d$ is always an integer, $\type_{\alpha}(b_{t}) = \mbox{I}$ and $\type_{\alpha'}(b_{t}) = \mbox{II}$. If $d = 0$, $\type_{\alpha'}(b_{t})$ can be either II or III. But in any case, after the positive resolution, $\type_{P\alpha}(b_{t}) = \mbox{II}$ and $\type_{P\alpha'}(b_{t}) = \mbox{III}$, implying $P\alpha \ne P\alpha'$. 

We may now assume that $\alpha$ and $\alpha'$ agree at $a_{0}, a_{s}, b_{0}, b_{t}$.  The proof that $\alpha(a_{i}) = \alpha'(a_{i})$ for $1 \le i \le s-1$ is the same as in the case of $P: \RMC^{0} \to \RMC^{2}$ above.  
For the remaining corners, let $n= \min\{i \;|\; \alpha(b_{i}) =  - \frac{1}{2}\}$. Then by Algorithm \ref{alg:PNRMC1}, $P\alpha (b_{n}) = 0$, and all $P\alpha(b_{i}) >0$ for $i >n$.  Thus also $n = \max \{i \;|\; P\alpha(b_{i}) =  0\}$.  For $\alpha'$, we similarly have $n' = \min\{i \;|\; \alpha'(b_{i}) =  - \frac{1}{2}\} = \max\{i \;|\; P\alpha'(b_{i}) =  0\}$.  Then $P \alpha = P \alpha'$ implies $n= n'$.   It follows from Algorithm \ref{alg:PNRMC1} for $\RMC^{1}_{w}$ that $\alpha(b_{j}) = \alpha'(b_{j})$ for $1 \le j \le t-1$.   Since all their corner numbers are the same,  we may conclude that $\alpha = \alpha'$.

\noindent \underline{Case $j = 2$, with $P: \RMC^{2} \to \RMC^{0}$} 

Suppose that $\alpha, \alpha' \in \RMC^{2}$ with $P\alpha = P\alpha'$.  As before, $\deg(\alpha) = \deg(\alpha') = d$, and arguments like in the $\RMC^{2}$ case shows that $\alpha$ and $\alpha'$ must agree along the corners $a_{0}, a_{s}, b_{0}$, and $b_{t}$. 

If $\alpha$ and $\alpha'$ disagree at some corner around  $v$ which is not $a_0$ or $a_s$, then let $k$ be the smallest index such that $\alpha(a_k) \neq \alpha'(a_k)$.  Without loss of generality, say $\alpha(a_k) < \alpha'(a_k)$.  Let $m$ and $m'$ be as in Algorithm \ref{alg:PRMC2}.  Taking the  positive resolution  $P$ can cause the corner coordinate at $a_k$ to increase by $0, \frac{1}{2}$ or $1$. If $P$ increases $\alpha$ at $a_k$ by 1 and $\alpha'$ at $a_k$ by $0$ or $\frac{1}{2}$, then $m <k$ and $k \leq m'$. Since $a_k$ is the first corner where $\alpha$ and $\alpha'$ disagree, $\alpha(a_m) = \alpha'(a_m) = - \frac{1}{2}$.  But this contradicts the minimality of $m'$. If $P$ increases $\alpha$ at $a_k$ by $\frac{1}{2}$ but leaves $\alpha'$ at $a_k$ unchanged, then $k = m$ and $k< m'$.  Focusing on the corner $a_{m'}$, from $ m'>m$ it follows that $\alpha$ increases by 1, whereas $\alpha'$ increases from $-\frac{1}{2}$ to 0.   However, $ 0 = P \alpha (a_{m'}) = \alpha(a_{m'}) + 1$ is impossible.    So all the corner coordinates agree, and $\alpha = \alpha'$. \end{proof}

\section{Edges are not zero-divisors}\label{sec:nonzerodivisor}

The goal of this section is to prove Theorem \ref{thm:locallyplanarzerodivisor}, by showing $e \beta \neq 0$ for all $\beta \in \Curve$.  In Section \ref{ssec:multiplication}, we first prove that multiplication by $e\beta \ne 0$ is when $\beta$ is a nontrivial linear combination of reduced multicurves with coefficients in $\CC$. In Section \ref{ssec:deltanozerdivisor} we shows the result of Section \ref{ssec:multiplication} implies the general case where the coefficients of $\beta$ are in $\CC[v_i^{\pm1}]$. Section \ref{ssec:multiplication} requires that $\Sigma$ be locally planar, whereas Section \ref{ssec:deltanozerdivisor} does not. Both proofs are reductive, and make use of Lemma~\ref{lem:split} as its first step. It will allow us to split our analysis according to membership in $\RMC^{j}$.

Recall that $\alpha \in \RMC^{1}_{v}$ if $v$ meets an end of $\alpha$ (see Section \ref{sec:RMC} for the formal definition). 

\begin{lemma}\label{lem:split}
Let $e$ be an edge in a triangulation of $\Sigma$. Let $\gamma \in \Curve$ , and write it as $\gamma = \gamma^{0} + \gamma^{1}+ \gamma^{2}$, where $\gamma^{j}$ is a $\CC[v_{i}^{\pm}]$-linear combination of reduced multicurves in $\RMC^{j}$.  Then $e\gamma = 0$ if and only if $e\gamma^{j} = 0$ for $j= 0, 1, 2$.

Suppose that the edge $e$ has distinct endpoints $v$ and $w$, we may further write $\gamma^{1} = \gamma^{1}_{v} + \gamma^{1}_{w}$ where $\gamma^{1}_{v}$ is a $\CC[v_{i}^{\pm}]$-linear combination of reduced multicurves in $\RMC^{1}_{v}$ and $\gamma_{w}^{1}$ is that of reduced multicurves in $\RMC^{1}_{w}$. Then $e \gamma^{1}  = 0 $ if and only if and $e \gamma^{1}_{v} = e \gamma^{1}_{w} = 0 $. 
\end{lemma}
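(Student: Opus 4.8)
The plan is to deduce the whole statement from the direct sum decomposition $\Curve = \Curve^{0} \oplus \Curve^{1} \oplus \Curve^{2}$ (and its refinement $\Curve^{1} = \Curve^{1}_{v} \oplus \Curve^{1}_{w}$) together with Lemma~\ref{lem:jto2-j}. The ``if'' direction is immediate: if $e\gamma^{j} = 0$ for $j = 0,1,2$, then by bilinearity of the product in $\Curve$ we get $e\gamma = e\gamma^{0} + e\gamma^{1} + e\gamma^{2} = 0$; and likewise $e\gamma^{1}_{v} = e\gamma^{1}_{w} = 0$ forces $e\gamma^{1} = 0$.

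For the ``only if'' direction, the key observation is that the three summands of $e\gamma$ land in \emph{distinct} pieces of the decomposition. Since $\gamma^{j}$ is a $\CC[v_{i}^{\pm}]$-linear combination of elements of $\RMC^{j}$ and $\Curve^{j}$ is the $\CC[v_{i}^{\pm}]$-submodule they generate, Lemma~\ref{lem:jto2-j} gives $e\gamma^{0} \in \Curve^{2}$, $e\gamma^{1} \in \Curve^{1}$, and $e\gamma^{2} \in \Curve^{0}$. Hence $e\gamma = e\gamma^{2} + e\gamma^{1} + e\gamma^{0}$ is precisely the decomposition of $e\gamma$ according to $\Curve = \Curve^{0} \oplus \Curve^{1} \oplus \Curve^{2}$, so $e\gamma = 0$ forces each of $e\gamma^{0}, e\gamma^{1}, e\gamma^{2}$ to vanish. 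That one may legitimately read off components summand-by-summand is exactly the uniqueness in Proposition~\ref{prop:generatorsofcurvealgebra}. When $v \neq w$, a reduced multicurve sharing exactly one endpoint with $e$ meets $e$ at $v$ or at $w$ but not both, so indeed $\Curve^{1} = \Curve^{1}_{v} \oplus \Curve^{1}_{w}$; and by the finer containment $e \cdot \Curve^{1}_{v} \subset \Curve^{1}_{w}$, $e \cdot \Curve^{1}_{w} \subset \Curve^{1}_{v}$ of Lemma~\ref{lem:jto2-j}, the products $e\gamma^{1}_{v}$ and $e\gamma^{1}_{w}$ again lie in different summands, so $e\gamma^{1} = e\gamma^{1}_{v} + e\gamma^{1}_{w} = 0$ forces each term to be zero.

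I do not expect any real obstacle here: the entire content is already packaged in Lemma~\ref{lem:jto2-j}. (That lemma is itself transparent: applying the puncture--skein relation, relation $2)$ of Definition~\ref{def:curvealgebra}, at an endpoint of $e$ that is shared with a component of $\gamma^{j}$ removes that endpoint from the reduced result, while an endpoint of $e$ not shared with $\gamma^{j}$ survives reduction, so the reduced form of $e\gamma^{j}$ shares exactly $2-j$ endpoints with $e$.) The only point requiring a word of care in the write-up is the invocation of Proposition~\ref{prop:generatorsofcurvealgebra} to justify comparing components in the direct sum decomposition; everything else is formal.
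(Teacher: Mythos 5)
Your proposal is correct and follows essentially the same route as the paper: both rest on Lemma~\ref{lem:jto2-j} (so that $e\gamma^{j}$ lies in $\Curve^{2-j}$, and $e\gamma^{1}_{v}$, $e\gamma^{1}_{w}$ lie in $\Curve^{1}_{w}$, $\Curve^{1}_{v}$ respectively) together with the freeness of $\Curve$ on $\RMC$ from Proposition~\ref{prop:generatorsofcurvealgebra}, which makes $\Curve=\Curve^{0}\oplus\Curve^{1}\oplus\Curve^{2}$ a direct sum and lets you conclude summand-by-summand. The parenthetical sketch of Lemma~\ref{lem:jto2-j} is a harmless addition; no gaps.
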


\begin{proof}
By Proposition~\ref{prop:generatorsofcurvealgebra}, $\RMC$ is the set of generators in the free $\CC[v_{i}^{\pm}]$-module $\Curve$, and by Lemma~\ref{lem:jto2-j} $e\gamma^{j}$ is a $\CC[v_{i}^{\pm}]$-linear combination of elements in $\RMC^{2-j}$. Thus $0 = e\gamma = e\gamma^{2} + e\gamma^{1} + e\gamma^{0}$ implies $e\gamma^{j} = 0$ for each $j$. The converse is clear.  The proof for $\gamma^{1} = \gamma^{1}_{v} + \gamma^{1}_{w}$ is similar. 
\end{proof}

The next simple computational lemma will be used in the proof of Proposition \ref{prop:multiplyingeisinjective}. 

\begin{lemma}\label{lem:extremaldegchange}
Let $\alpha \in \RMC$ and $\deg_{e}(\alpha) = d$. Suppose that one of the conditions below holds:
\begin{enumerate}
\item $\alpha \in \RMC^{0}$, $d > 0$, and $\pi(\alpha) = (d, d)$;
\item $\alpha \in \RMC^{1}_{w}$ and $\pi(\alpha) = (d+\frac{1}{2}, d)$;
\item $\alpha \in \RMC^{2}$ and $\pi(\alpha) = (d+\frac{1}{2}, d+\frac{1}{2})$.
\end{enumerate}
Then $P\alpha(b_{t-1}) > N\alpha(b_{t-1})$.

If either:
\begin{enumerate}
\item $\alpha \in \RMC^{0}$, $d > 0$, and $\pi(\alpha) = (0, 0)$;
\item $\alpha \in \RMC^{1}_{w}$ and $\pi(\alpha) = (0, -\frac{1}{2})$;
\item $\alpha \in \RMC^{2}$ and $\pi(\alpha) = (-\frac{1}{2}, -\frac{1}{2})$,
\end{enumerate}
then $P\alpha(b_{1}) < N\alpha(b_{1})$. 
\end{lemma}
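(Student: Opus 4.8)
The plan is to verify the two assertions by a direct unwinding of the bottom-change operators appearing in Algorithms \ref{alg:PRMC0}, \ref{alg:NRMC0}, \ref{alg:PRMC2}, \ref{alg:NRMC2}, \ref{alg:PNRMC1}, \ref{alg:PNRMC1b}; the hypotheses on $\pi(\alpha)$ are tailored precisely so that the indices occurring in those formulas are pinned to their extreme values, after which the inequalities fall out.

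First I would record what the hypotheses say about the four corners adjacent to $e$. Since $d = \deg_e(\alpha) = \alpha(a_s) + \alpha(b_0) = \alpha(a_0) + \alpha(b_t)$ by \eqref{eqn:locdeg=sum}, the given value $\pi(\alpha) = (\alpha(a_s),\alpha(b_t))$ determines $\alpha(b_0)$ and $\alpha(a_0)$. In the three cases of the first assertion one gets $\alpha(b_0) = 0, -\tfrac12, -\tfrac12$ respectively, i.e. $\alpha(b_0)$ is at its minimum possible value while $\alpha(b_t)$ is at its maximum; in the three cases of the second assertion the roles of $b_0$ and $b_t$ are interchanged. I would also note the elementary fact that none of the top-change operators $P_0^t, P_1^t, N_0^t, N_1^t$ can alter the corner coordinate at $b_1$ or at $b_{t-1}$: the only corners near $w$ that a top change touches are $a_0^R = b_t$, $a_0^L = b_t^R$, $a_s^L = b_0$, $a_s^R = b_0^L$, none of which equals $b_1$ or $b_{t-1}$ since $t \ge 2$; and the two ``side'' corners of any triangle at $w$ are never among $b_0,\dots,b_t$. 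Hence it suffices to track what the bottom-change operator does at $b_{t-1}$ (resp. at $b_1$).

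Now I would run the bottom changes. For the first assertion, minimality of $\alpha(b_0)$ means that in the \emph{counterclockwise} search around $w$ used by the positive resolution the distinguished corner is $b_0$ itself; reading off Algorithms \ref{alg:PRMC0}, \ref{alg:PRMC2}, \ref{alg:PNRMC1} then gives $P\alpha(b_{t-1}) = \alpha(b_{t-1})$ when $\alpha \in \RMC^0$, and $P\alpha(b_{t-1}) = \alpha(b_{t-1}) + 1$ when $\alpha$ meets $e$ at $w$ (the $\RMC^1_w$ and $\RMC^2$ cases), so $P\alpha(b_{t-1}) \ge \alpha(b_{t-1})$ in every case. On the negative side, maximality of $\alpha(b_t)$ means the \emph{clockwise} search around $w$ used by $N$ cannot stop at $b_t$: when $\alpha \in \RMC^0$ it stops at some $b_n$ with $n \le t-1$, and Algorithm \ref{alg:NRMC0} then yields $N\alpha(b_{t-1}) < \alpha(b_{t-1})$; when $\alpha$ meets $e$ at $w$ the unique $(-\tfrac12)$-corner at $w$ is $b_0$, the search stops there, and Algorithms \ref{alg:NRMC2}, \ref{alg:PNRMC1b} leave $b_{t-1}$ untouched, giving $N\alpha(b_{t-1}) = \alpha(b_{t-1})$. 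Combining the two sides produces $P\alpha(b_{t-1}) > N\alpha(b_{t-1})$ in all cases. The second assertion is the mirror image: minimality of $\alpha(b_t)$ forces the clockwise $N$-search to stop at $b_t$, so $N\alpha(b_1) = \alpha(b_1)$ when $\alpha \in \RMC^0$ and $N\alpha(b_1) = \alpha(b_1)+1$ when $\alpha$ meets $e$ at $w$, while maximality of $\alpha(b_0)$ forces the counterclockwise $P$-search past $b_0$, so that $P\alpha(b_1) < \alpha(b_1)$ when $\alpha \in \RMC^0$ and $P\alpha(b_1) = \alpha(b_1)$ otherwise; either way $P\alpha(b_1) < N\alpha(b_1)$.

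The routine-but-fiddly part, and the place I would be most careful, is bookkeeping the three membership cases $\RMC^0$, $\RMC^1_w$, $\RMC^2$ simultaneously, since the mechanism producing the strict inequality is genuinely different across them (in one of them it is the positive resolution that shifts the relevant corner, in the other it is the negative resolution), and one must read the clockwise/counterclockwise conventions and the index comparisons in the algorithms consistently, for instance by obtaining the $N$-formulas as the horizontal reflections of the $P$-formulas. One should also dispatch quickly the degenerate situation in which $e$ is a component of $\alpha$ (this can only occur in the $\RMC^2$ cases and forces $d = -1$), using the special conventions for $P\alpha$ and $N\alpha$ from Section \ref{sec:resolutions}: there $N\alpha = -2(\alpha \setminus e)$ and $P\alpha$ is the explicit resolution pictured in Figure \ref{fig:pos res e=alpha}, and the claimed inequality is immediate.
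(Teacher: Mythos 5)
Your proposal takes exactly the paper's route: the paper verifies the first case ($\alpha\in\RMC^{0}$, $\pi(\alpha)=(d,d)$) by reading off the bottom changes from the algorithms of Section~\ref{sec:algorithms} and declares the remaining cases immediate and symmetric, which is precisely the computation you spell out in all six cases; your decision to read the $N$-formulas as horizontal reflections of the $P$-formulas is also the right one (the printed index condition in $N_{0}^{b}$ is inconsistent with the paper's own use of it), and your observation that top changes never touch $b_{1}$ or $b_{t-1}$ is correct and is the key bookkeeping point.

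One intermediate claim is not quite right, although the conclusion survives. In the cases where $\alpha$ meets $e$ at $w$, you assert that $b_{0}$ is the \emph{unique} corner at $w$ with value $-\tfrac12$ (and, in the mirrored cases, that $b_{t}$ is). This fails when the arc of $\alpha$ ending at $w$ is itself an edge of the triangulation: by the type III rule in Definition~\ref{def:cornercoordinatemap}, \emph{both} corners at $w$ adjacent to that edge carry $-\tfrac12$, so for instance with $\alpha(b_{0})=-\tfrac12$ the clockwise $N$-search may stop at $b_{1}$ instead of $b_{0}$. When $t\ge 3$ this still leaves $b_{t-1}$ untouched, but when $t=2$ one gets $N\alpha(b_{t-1})=\alpha(b_{t-1})+\tfrac12$ rather than $\alpha(b_{t-1})$; likewise in the second assertion $P\alpha(b_{1})$ can be $\alpha(b_{1})+\tfrac12$ rather than $\alpha(b_{1})$. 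Since in those cases the other resolution shifts the relevant corner by a full $1$, the strict inequalities $P\alpha(b_{t-1})>N\alpha(b_{t-1})$ and $P\alpha(b_{1})<N\alpha(b_{1})$ are unaffected, but you should state these steps as inequalities (difference at most $\tfrac12$ on one side versus exactly $1$ on the other) or treat the type III configuration separately, rather than asserting uniqueness and exact equality.
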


\begin{proof}
The proof is immediate if one apply Algorithms in Section \ref{sec:algorithms}. Here we give the proof of the very first statement to describe how the proof goes. 

Suppose that $\alpha \in \RMC^{0}$, $d > 0$, and $\pi(\alpha) = (d, d)$. Then $\deg_{e}(\alpha) = d$ and $\alpha(b_{t}) = 0$, so we have $\alpha(b_{0}) = 0$. Thus $n = \mathrm{min}\{i\;|\; \alpha(b_{i})\} = 0$. Therefore $P\alpha(b_{t-1}) = \alpha(b_{t-1})$ by Algorithm \ref{alg:PRMC0}. On the other hand, because $\alpha(b_{t}) = d > 0$, $\mathrm{max}\{i\;|\; \alpha(b_{i}\} \le t-1$. Thus $N\alpha(b_{t-1}) < \alpha(b_{t-1})$ and we obtain the result.

The second half of the statement is obtained by symmetry. 
\end{proof}

\subsection{First step - complex coefficients}\label{ssec:multiplication}

\begin{proposition}\label{prop:multiplyingeisinjective}
Suppose that $e$ is an edge of a locally planar triangulation of $\Sigma$.  For any $\beta$ that is a nonzero $\CC$-linear combination of reduced multicurves, the product $e\beta \neq 0$ in $\Curve$. 
\end{proposition}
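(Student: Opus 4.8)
\emph{Set-up and reductions.} The plan is to exhibit, in the expansion of $e\beta$, a single reduced multicurve that survives as a term of maximal edge degree. Write $\beta=\sum_{k\in I}c_{k}\alpha_{k}$ with the $\alpha_{k}\in\RMC$ distinct and $c_{k}\in\CC\setminus\{0\}$. A locally planar triangulation has no one-cycle, so $e$ has two distinct endpoints, which I call $v$ and $w$; Lemma~\ref{lem:split} then lets me replace $\beta$ by whichever of $\beta^{0},\beta^{1}_{v},\beta^{1}_{w},\beta^{2}$ is nonzero and so assume that all the $\alpha_{k}$ lie in a single $\RMC^{j}$, and in $\RMC^{1}_{w}$ when $j=1$. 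By Lemmas~\ref{lem:jto2-j} and~\ref{lem:jto2-jPN}, $e\beta$ and all resolutions of the $e\alpha_{k}$ then lie in one $\RMC^{2-j}$ (or in $\RMC^{1}_{v}$). Put $d:=\max_{k}\deg_{e}(\alpha_{k})$ and $I_{0}:=\{k:\deg_{e}(\alpha_{k})=d\}$.

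\emph{Isolating the top edge degree.} Each $e\alpha_{k}$ is a $\CC[v_{i}^{\pm}]$-linear combination of its $2^{|e\cap\alpha_{k}|}$ crossingless resolutions; by Lemma~\ref{lem:PandNareleadingterms} the ones of maximal edge degree among them are $P\alpha_{k}$ and $N\alpha_{k}$, and by Lemma~\ref{lem:Pisorderpreserving} that degree equals $\deg_{e}(\alpha_{k})+j-1$, every other resolution being strictly lower. Hence the edge-degree-$(d+j-1)$ part of $e\beta$ (in the $\RMC$-basis of Proposition~\ref{prop:generatorsofcurvealgebra}) is a $\CC[v_{i}^{\pm}]$-combination of $\{P\alpha_{k},N\alpha_{k}:k\in I_{0}\}$ alone. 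Resolving one crossing of $e$ with $\alpha_{k}$ multiplies both of its two resolutions by the same unit in $\CC[v_{i}^{\pm}]$ ($1$ at an interior crossing, $v_{i}^{-1}$ at an endpoint crossing at $v_{i}$), so in $e\beta$ each $P\alpha_{k}$ $(k\in I_{0})$ appears with coefficient $\pm c_{k}u_{k}$ for a unit monomial $u_{k}$, together possibly with a further $\pm c_{\ell}u_{\ell}$ whenever $P\alpha_{k}=N\alpha_{\ell}$; the exceptional case $e\subseteq\alpha_{k}$, where $N\alpha_{k}=-2(\alpha_{k}\setminus e)$, I would check separately but it does not change which multicurve $P\alpha_{k}$ is. Since Proposition~\ref{prop:Pisinjective} makes the $P\alpha_{k}$ ($k\in I_{0}$) pairwise distinct, and the $N\alpha_{k}$ pairwise distinct, it suffices to find one $j_{0}\in I_{0}$ with
\[
	P\alpha_{j_{0}}\neq N\alpha_{k}\qquad\text{for every }k\in I_{0}\text{ with }k\neq j_{0};
\]
then nothing cancels $P\alpha_{j_{0}}$ in $e\beta$, so it occurs there with coefficient $\pm c_{j_{0}}u_{j_{0}}\neq0$ in the top edge degree, and $e\beta\neq0$.

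\emph{Choosing the distinguished term.} The idea is to make $P\alpha_{j_{0}}$ so extreme near one endpoint of $e$ that no negative resolution can match it. Using \eqref{eqn:locdeg=sum}, I would pass from $I_{0}$ to successively smaller subsets cut out by extremal conditions on the corner coordinates in $\Star(e)$ --- maximizing (or, at the other extreme, minimizing) $\alpha_{k}(b_{t})$, then $\alpha_{k}(a_{s})$, and then comparing the coordinates $\alpha_{k}(b_{t-1}),\alpha_{k}(b_{t-2}),\dots$ and finally the corners around $v$ lexicographically --- and take $\alpha_{j_{0}}$ to be a resulting extremal curve. Using the explicit algorithms of Section~\ref{sec:algorithms} together with Lemmas~\ref{lem:locdegcomputation} and~\ref{lem:extremaldegchange}, I would then verify, in each of the cases $j=0,1,2$ (and the $e\subseteq\alpha$ subcase), that $P\alpha_{j_{0}}$ and $N\alpha_{k}$ already differ at $b_{t-1}$ (respectively $b_{1}$ at the opposite extreme), or else at the first $b_{i}$ where $\alpha_{j_{0}}$ and $\alpha_{k}$ disagree, whence $P\alpha_{j_{0}}\neq N\alpha_{k}$. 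As noted in Remark~\ref{rem:algviewpoint}, it seems that one is forced to work with a partial order plus tie-breaking rather than a clean total order.

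\emph{The main obstacle.} The hard part is exactly this last step. The coincidences $P\alpha_{i}=N\alpha_{j}$ with $\alpha_{i}\neq\alpha_{j}$ of equal edge degree are genuine (Figures~\ref{fig:cancellationRMC1} and~\ref{fig:Pai=Naj}), so ruling them out for the distinguished term requires committing to a precise extremal/lexicographic prescription for $\alpha_{j_{0}}$ and then grinding through the three cases $j=0,1,2$ (plus the exceptional $e\subseteq\alpha$ subcase) with the resolution formulas of Section~\ref{sec:algorithms}. By contrast, the reductions, the identification of the edge-degree-$(d+j-1)$ part, and the unit-coefficient bookkeeping are routine.
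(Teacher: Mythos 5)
Your reductions and the identification of the top edge-degree part of $e\beta$ match the paper's argument: split by $\RMC^{j}$ via Lemma~\ref{lem:split}, restrict to the leading terms of degree $d$, invoke Lemmas~\ref{lem:PandNareleadingterms} and~\ref{lem:Pisorderpreserving} so that only $P\alpha_{k}$ and $N\alpha_{k}$ ($k\in I_{0}$) can appear in degree $d+j-1$, and use Proposition~\ref{prop:Pisinjective} to reduce everything to exhibiting one $j_{0}$ with $P\alpha_{j_{0}}\neq N\alpha_{k}$ for all $k\neq j_{0}$. Your unit-coefficient bookkeeping is also fine. But the proof stops exactly where the real content begins: the selection of the distinguished term and the verification that its positive resolution cannot be a negative resolution of any other leading term is only announced (``I would verify\dots''), not carried out, and you yourself flag it as the hard part. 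That step is the heart of the proposition and cannot be waved through; as the paper's Figures~\ref{fig:cancellationRMC1} and~\ref{fig:Pai=Naj} show, the coincidences $P\alpha_{i}=N\alpha_{k}$ are genuine and occur precisely among curves of equal edge degree.

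Two concrete issues with the sketched prescription. First, for $j=0$ the lexicographic argument on $\pi(\alpha)=(\alpha(a_{s}),\alpha(b_{t}))$ does \emph{not} work on all of $\RMC^{0}$: the paper must first subdivide $\RMC^{0}$ into $\RMC^{0}_{\ri\ri},\RMC^{0}_{\ri\rf},\RMC^{0}_{\rf\ri},\RMC^{0}_{\rf\rf}$ according to whether the two triangles adjacent to $e$ are integral or fractional, prove (Lemma~\ref{lem:RMC0ii}) that resolutions coming from different subclasses can never cancel, and only then observe that $P$ and $N$ are $\succ$-preserving on each subclass --- the paper explicitly remarks this fails on $\RMC^{0}$ without the subdivision. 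Your plan makes no provision for this, so the ``first $b_{i}$ where they disagree'' comparison could be defeated by cancellations across subclasses. Second, the paper's tie-break at the extremal values of $\pi$ (e.g.\ $\pi_{\max}=(d+\tfrac12,d)$, $(d+\tfrac12,d+\tfrac12)$, $(d,d)$, or the symmetric minimal corners) is to maximize $P\alpha_{k}(b_{t-1})$, i.e.\ a coordinate of the \emph{resolution}, and then Lemma~\ref{lem:extremaldegchange} ($P\alpha(b_{t-1})>N\alpha(b_{t-1})$ at those extremes) gives an immediate contradiction with $P\alpha_{k}=N\alpha_{\ell}$; your proposed lexicographic comparison of the coordinates of $\alpha_{k}$ itself is a different rule whose correctness you have not checked. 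Until the case analysis for $j=0,1,2$ (with the subdivision in the $j=0$ case and a working tie-break) is actually written out, the proposal is an outline of the paper's strategy rather than a proof.
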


\begin{proof}
To begin, we split the components of $\beta$ according to their membership in $\RMC^{j}$. Applying Lemma~\ref{lem:split}, we may thus fix $j$ and assume that $\beta = \sum_{i \in I} c_{i}\alpha_{i}$, where $ c_{i} \in \CC$ and $\alpha_{i}\in \RMC^{j}$.   From here on out, the general strategy is to consider the leading terms of $\beta$ and $e \beta$.  We show that some leading term of $e \beta$ is nonzero, and thus $e \beta \neq 0 $.  

Let $d = \max \{ \deg_e(\alpha_{i}) \;|\; i \in I  \}$ and $J = \{ i \;| \; \deg_e(\alpha_{i}) = d \}$.  Then $S = \{\alpha_{i}\}_{i \in J}$ consists of the leading terms in $\beta$, and $PS = \{P\alpha_{i}\}_{i \in J}$ and $NS = \{P\alpha_{i}\}_{i \in J}$ consist of the positive and negative resolutions of the leading terms. By Lemmas \ref{lem:PandNareleadingterms} and \ref{lem:Pisorderpreserving}, $P\alpha_{i}$ and $N\alpha_{i}$ are the only possible leading terms in $e\alpha_{i}$, and their degree is $d + j -1$. So the set of leading terms of $e \beta$ is a subset of $PS \cup NS$. 

However, as we will see, the leading terms of $e \beta$ can be a proper subset of $PS \cup NS$, meaning there can be cancellations among the possible leading terms when computing $e \beta$. Because the resolution maps are injective by Proposition \ref{prop:Pisinjective}, cancellations cannot occur amongst the positive resolutions, and the same is true of the negative resolutions. But $P \alpha_{i} = N \alpha_{k}$ for $i, k \in J$ may occur. See Figures~\ref{fig:cancellationRMC1} and \ref{fig:Pai=Naj} for examples. Our goal is to show that some member of $PS \cup NS$ survives to be a leading term of $e \beta$. 

Based on our discussion above, from now on we thus replace $I$ with $J$ and show $e \beta \neq 0$ when $\beta$ is $\deg_{e}$-homogeneous.  Proof of the following lemma then finishes the proof of Proposition~\ref{prop:multiplyingeisinjective}.

\begin{lemma}\label{lem:homogeneouscase}
Fix $j=0, 1, 2$, and let $\beta = \sum_{i \in J} c_{i}\alpha_{i}$  with $c_{i} \in \CC$ and all the $\alpha_{i}\in \RMC^{j}$ having edge degree $d$.  Then leading terms of $e \beta$ have degree $d+ j -1$, and each is a positive or negative resolution of a leading term of $\beta$.  
\end{lemma}

\begin{proof} 
To distinguish between the possible leading terms, we analyze their projection onto two coordinates, with $\pi(\alpha) = (\alpha(a_{s}), \alpha(b_{t}))$ for any reduced multicurve $\alpha$. See Section~\ref{ssec:changelc}.   Let us denote $\pi(S)= \{\pi(\alpha_{i})\}_{i \in J}$, and similarly $\pi(PS)= \{\pi(P\alpha_{i})\}_{i \in J}$ and $\pi(NS)= \{\pi(N\alpha_{i})\}_{i \in J}$.  We order the projected coordinates using lexicographical ordering $\succ$; that is,  $(x, y) \succ (x', y')$ if $x > x'$ or $x = x'$ and $y > y'$. There is a maximal $\pi_{\max}$ in $\pi(S)$.  We begin with the cases $j = 1, 2$, as they are simpler. 

For the case $j= 1$,  compare the action of the $P$ and $N$ maps, as depicted in Figure~\ref{fig:degchangeRMC1}.  Observe that $\pi(P\alpha) \succ \pi(N\alpha)$ for all $\alpha \in \RMC^{1}$ except those with $\pi(\alpha) = (d+\frac{1}{2}, d)$. 
If $\pi_{\max} \neq  (d+\frac{1}{2}, d)$,  pick any $\alpha_{\max} \in \{\alpha_{i}\}_{i \in J}$ with $\pi( \alpha_{\max}) = \pi_{\max}$.  Since $\pi_{\max} \in \pi(PS) \setminus \pi(NS)$,  $P \alpha_{\max} \neq N \alpha_{i}$ for every $i \in J$.  Moreover, because of  the injectivity of the positive resolution map, $P \alpha_{\max}$ is distinct from every other $P \alpha_{i}$ in $PS$ as well.  Thus $P \alpha_{\max}$ will be a leading term of $e \beta$. 

However, if $\pi_{\max} =  (d+\frac{1}{2}, d)$, there may exist $i, k \in J$ such that $\pi(\alpha_{i}) = \pi(\alpha_{k})  = \pi_{\max}$ and $\pi(P\alpha_{i}) = \pi(N\alpha_{k})$. See Figure \ref{fig:cancellationRMC1} for an illustrated example. In this case, the projected coordinates are not enough to determine how to locate a suitable $\alpha_{\max}$. 

In this case, pick $\alpha_{k} \in \{\alpha_{i}\;|\; \pi(\alpha_{i}) = \pi_{\max}\}$ with the maximum $P\alpha_{k}(b_{t-1})$. We claim that $P\alpha_{k}$ will survive after the cancellation with other terms. Suppose not. Then $P\alpha_{k} = N\alpha_{\ell}$ for some $\alpha_{\ell} \in \{\alpha_{i}\;|\; \pi(\alpha_{i}) = \pi_{\max}\}$. Then by Lemma \ref{lem:extremaldegchange}, $P\alpha_{k}(b_{t-1}) = N\alpha_{\ell}(b_{t-1}) < P\alpha_{\ell}(b_{t-1})$. It violates the maximality of $P\alpha_{k}(b_{t-1})$. Therefore such $\ell$ does not exist and $P\alpha_{k}$ is a nonzero leading term of $e\beta$. 

\begin{figure}[!ht]
\includegraphics[width=0.4\textwidth]{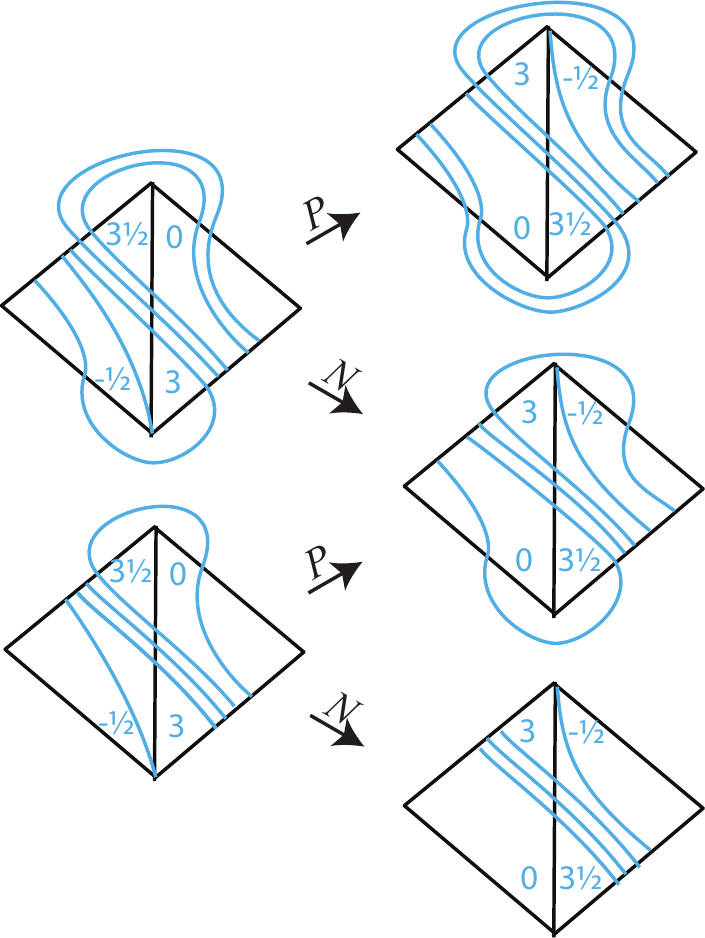}
\caption{Examples of $\alpha_{i}, \alpha_{k} \in \RMC^{1}$ such that $\pi(P\alpha_{i}) = \pi(N\alpha_{k})$. Notice that $N\alpha_{i}(b_{t-1}) < P\alpha_{k}(b_{t-1})$.}
\label{fig:cancellationRMC1} 
\end{figure}

The case $j = 2$ is very similar.  From Figure \ref{fig:degchangeRMC2},  we see that $\pi(P\alpha) \succ \pi(N\alpha)$ for all $\alpha \in \RMC^{2}$, except those with $\pi(\alpha) = (d+\frac{1}{2}, d+\frac{1}{2})$ or $ (-\frac{1}{2}, -\frac{1}{2})$.   If  $\pi_{\max} \neq (d+\frac{1}{2}, d+\frac{1}{2}),  (-\frac{1}{2}, -\frac{1}{2})$,  pick any $\alpha_{\max}$ with $\pi( \alpha_{\max}) = \pi_{\max}$.    If $\pi_{\max} = (d+\frac{1}{2}, d+\frac{1}{2})$ or $\pi_{\max} = (-\frac{1}{2}, -\frac{1}{2})$, cancellations are possible. 

Suppose that $\pi_{\max} = (d+\frac{1}{2}, d+\frac{1}{2})$. Pick $\alpha_{k} \in \{\alpha_{i}\;|\; \pi(\alpha_{i}) = \pi_{max}\}$ with the maximum $P\alpha_{k}(b_{t-1})$. If $P\alpha_{k} = N\alpha_{\ell}$ for some other $\alpha_{\ell} \in \{\alpha_{i}\;|\; \pi(\alpha_{i}) = \pi_{max}\}$, then $P\alpha_{k}(b_{t-1}) = N\alpha_{\ell}(b_{t-1}) < P\alpha_{\ell}(b_{t-1})$ by Lemma \ref{lem:extremaldegchange}. Thus such $\alpha_{\ell}$ does not exist and $P\alpha_{k}$ is a leading term in $e\beta$. When $\pi_{\max} = (-\frac{1}{2}, -\frac{1}{2})$, then one can show in a similar way by using $N$ and $b_{1}$ instead of $P$ and $b_{t-1}$ by symmetry. 

The case $j = 0$ also proceeds along the same lines, but we are first required to subdivide $\RMC^{0}$ based on whether the triangles on either side of $e$ are integral of type I,  or fractional of type II. (Type III is not possible here.)  Let $\RMC^{0}_{\ri \ri}$ be the set of $\alpha \in \RMC^{0}$ with $ \type_{\alpha}(a_{s}) = \mbox{I}$ and $\type_{\alpha}(b_{t}) = \mbox{I}$, and let $\RMC^{0}_{\ri \rf}$ be the set where $\type_{\alpha}(a_{s}) = \mbox{I}$ and $\type_{\alpha}(b_{t}) = \mbox{II}$.  Similarly define $\RMC^{0}_{\rf \ri}$ and $\RMC^{0}_{\rf \rf}$.  Clearly, $\RMC^{0} = \RMC^{0}_{\ri \ri} \sqcup \RMC^{0}_{\ri \rf} \sqcup \RMC^{0}_{\rf \ri} \sqcup \RMC^{0}_{\rf \rf}$.

\begin{lemma}\label{lem:RMC0ii}
Let $\beta = \sum_{i \in I}c_{i}\alpha_{i}$, with $c_{i} \in \CC$ and $\alpha_{i} \in \RMC^{0}$. Let $T_{\ri \ri} := \{P\alpha_{i}, N\alpha_{i}\;|\; \alpha_{i} \in \RMC_{\ri \ri}^{0}\}$ and define $T_{\ri \rf}$, $T_{\rf \ri}$, $T_{\rf \rf}$ in a similar way. Then $T_{\ri \ri}$, $T_{\ri \rf}$, $T_{\rf \ri}$, and $T_{\rf \rf}$ are mutually disjoint. 
\end{lemma}

\begin{proof}
We show that $T_{\ri \ri}$ and $T_{\rf \ri}$ are disjoint.  The other cases are similar. 

When integral on the left, we have $\type_{\alpha}(a_{s}) = \mbox{I}$, and we show that $P\alpha(a_{s}^{R})  \geq 0 $.  There are two cases, either $\type_{P\alpha}(a_{s})$ is I or II.  The type~I case is clear.  If $\type_{P\alpha}(a_{s})$ is type II, one of $P\alpha(a_{s})$ and $P\alpha(b_{0})$ is $-\frac{1}{2}$,  and so $P\alpha(a_{s}^{R}) > 0$.   Similarly, $N\alpha(a_{s}^{R}) \geq 0$.

On the other hand, when fractional on the left, we have $\type_{\alpha}(a_{s}) = \mbox{II}$, and then $\type_{P\alpha}(a_{s})$ and $\type_{N\alpha}(a_{s})$ are II or III. Furthermore, $P\alpha(a_{s}^{R}) = N\alpha(a_{s}^{R}) = -\frac{1}{2}$. 
\end{proof}

In our setting, Lemma \ref{lem:RMC0ii} implies that cancellations are possible only when the types of the triangles on either side of $e$ agree.  
Let us further assume that $\beta = \sum_{i \in J}c_{i}\alpha_{i}$ is a linear combination of $\alpha_{i}$ that are in one of $\RMC^{0}_{\ri \ri}$, $\RMC^{0}_{\ri \rf}$, $\RMC^{0}_{\rf \ri}$, or $\RMC^{0}_{\rf \rf}$. 

Lexicographically order the projected coordinates $ \pi(\alpha_{i})$ for $i\in J$, and let $\pi_{\max}$ be the maximal coordinates with respect to $\succ$.  From Figure \ref{fig:degchangeRMC0}, one can check that if we restrict the domain to one of $\RMC^{0}_{\ri \ri}$, $\RMC^{0}_{\ri \rf}$, $\RMC^{0}_{\rf \ri}$, and $\RMC^{0}_{\rf \rf}$, then $P$ and $N$ are $\succ$-preserving maps.  We remark that this is not true for $\RMC^{0}$ without the subdivision.  

\begin{figure}[!ht]
\includegraphics[width=0.4\textwidth]{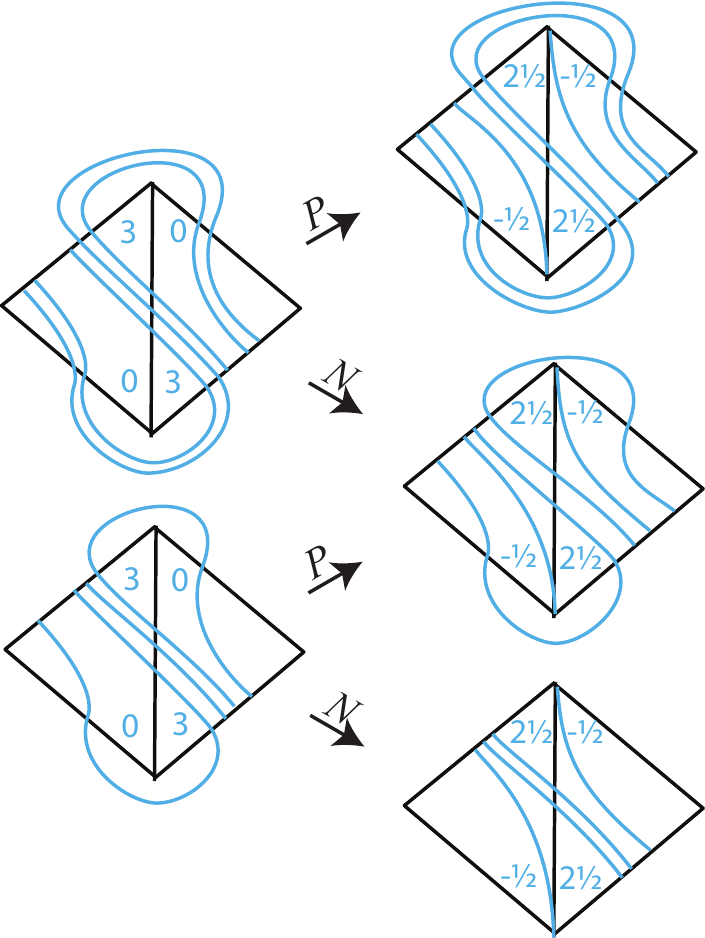}
\caption{Examples of $\alpha_{i}, \alpha_{k} \in \RMC^{0}_{\ri \ri}$ such that $\pi(P\alpha_{i}) = \pi(N\alpha_{k})$. Notice that $N\alpha_{i}(b_{t-1}) < P\alpha_{k}(b_{t-1})$.}\label{fig:Pai=Naj}
\end{figure}

One can verify that $\pi(P\alpha) \succ \pi(N\alpha)$ except possibly when $\pi(\alpha) = (0,0), (d,d)$.  If $ \pi_{\max} \ne (0,0), (d, d)$, pick any $\alpha_{\max}$ such that $\pi(\alpha_{\max} )= \pi_{\max}$. If $ \pi_{\max} = (d, d)$, pick $\alpha_{\max}$ so that $P\alpha_{\max}(b_{t-1})$ is the maximum. Arguing like in the $j= 1, 2$ cases, we see that $P\alpha_{\max}$ will survive in $e \beta$. The case of $\pi(\alpha) = (0, 0)$ is obtained in a similar way. 
\end{proof}

We conclude our proof of Proposition~\ref{prop:multiplyingeisinjective}.  If $\beta$ has a leading term of degree $d$, Lemma~\ref{lem:homogeneouscase} shows that $e \beta$ has a nonzero leading term with the expected degree $d + j -1$.  Hence $e \beta \neq 0$.
\end{proof}

\subsection{Second step - general coefficients} \label{ssec:deltanozerdivisor}
Unlike Proposition~\ref{prop:multiplyingeisinjective}, Proposition \ref{prop:deltanozerodivisor} below is valid for arbitrary surfaces, and does not require $\Sigma$ to be locally planar.

\begin{proposition}\label{prop:deltanozerodivisor}
Let $e$ be an edge of a triangulation of $\Sigma$, and suppose that $e \beta \neq 0$ for any  $\beta$ that is a non-zero $\CC$-linear combination of reduced multicurves. Then $e$ is not a zero divisor in $\Curve$. 
\end{proposition}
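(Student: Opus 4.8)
The plan is to upgrade the hypothesis --- which only controls $\CC$-linear combinations of reduced multicurves --- to injectivity of multiplication by $e$ on all of $\Curve$, by keeping track of the powers of the puncture variables $v_{i}$ that appear when one multiplies by $e$. By Lemma~\ref{lem:split} it suffices to show that multiplication by $e$ is injective separately on $\Curve^{0}$, on $\Curve^{2}$, and --- when $e$ has distinct endpoints $v,w$ (the only case relevant to Theorem~\ref{thm:locallyplanarzerodivisor}, since a locally planar triangulation has no self-folded edge) --- on $\Curve^{1}_{v}$ and $\Curve^{1}_{w}$.

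Fix one of these submodules, say $\Curve^{1}_{v}$, and let $U\subseteq\RMC$ be the corresponding set of reduced multicurves; every $\alpha\in U$ shares the same endpoint set with $e$ (here, just $\{v\}$). Among the relations defining $\Curve$, only the second relation in Definition~\ref{def:curvealgebra} involves the variables $v_{i}$, and in forming $e\alpha$ it is invoked exactly once for each common endpoint of $e$ and $\alpha$: resolving interior crossings via the first relation never creates new curve-ends at a puncture, and discarding trivial and puncture loops via the third and fourth relations introduces no $v_{i}$. Hence there is a single monomial $\bv^{\bd}\in\CC[v_{i}^{\pm}]$ (here $\bv^{\bd}=v$; in general the product of the common-endpoint variables) such that $\tau(\alpha):=\bv^{\bd}\,e\alpha$ is a $\CC$-linear combination of reduced multicurves for every $\alpha\in U$. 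Extending $\CC$-linearly gives $\tau:\CC\langle U\rangle\to\CC\langle\RMC\rangle$, and for $0\neq\beta\in\CC\langle U\rangle$ the hypothesis gives $e\beta\neq 0$, so $\tau(\beta)=\bv^{\bd}(e\beta)\neq 0$ since $\bv^{\bd}$ is a unit; thus $\tau$ is injective.

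Now take any $\gamma\in\Curve^{1}_{v}$ and write $\gamma=\sum_{\bm}\beta_{\bm}\bv^{\bm}$ with the $\beta_{\bm}\in\CC\langle U\rangle$ and the monomials $\bv^{\bm}$ distinct, as permitted by Proposition~\ref{prop:generatorsofcurvealgebra}. Then $e\gamma=\sum_{\bm}(e\beta_{\bm})\bv^{\bm}=\sum_{\bm}\tau(\beta_{\bm})\,\bv^{\bm-\bd}$, and since the monomials $\bv^{\bm-\bd}$ are still pairwise distinct while $\Curve$ is free over $\CC[v_{i}^{\pm}]$ with basis $\RMC$, this vanishes only if every $\tau(\beta_{\bm})=0$, hence every $\beta_{\bm}=0$, hence $\gamma=0$. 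The identical argument works for $\Curve^{0}$ (with $\bv^{\bd}=1$), for $\Curve^{2}$ (with $\bv^{\bd}=vw$), and for $\Curve^{1}_{w}$; for a self-folded edge $\Curve^{2}$ is trivial and $\Curve^{1}$ is not refined, and the same mechanism applies. Combined with Lemma~\ref{lem:split} this gives $e\gamma=0\Rightarrow\gamma=0$, so $e$ is not a zero divisor. The one point I expect to need genuine care is the claim in the middle paragraph that the power of each $v_{i}$ in $e\alpha$ is independent of $\alpha\in U$ --- equivalently, that the second skein relation is applied a fixed number of times --- which is exactly where the special structure of the relations (only one of them carries a puncture variable, and interior resolutions create no new punctured ends) is used.
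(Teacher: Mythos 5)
Your argument is correct and is essentially the paper's own proof: split via Lemma~\ref{lem:split}, expand in the monomial basis of Proposition~\ref{prop:generatorsofcurvealgebra}, observe that only the puncture relation carries a $v_i$ and is invoked exactly once per shared endpoint, so that $v\,e\gamma_{\bm}$ (resp.\ $vw\,e\gamma_{\bm}$, resp.\ $e\gamma_{\bm}$ itself) is a $\CC$-linear combination of reduced multicurves, and then conclude from freeness over $\CC[v_{i}^{\pm}]$ and the hypothesis. The only cosmetic difference is the self-folded case, where the paper puts curves ending at the common vertex $v$ into $\RMC^{2}$ (still with a single factor $v$, since only one endpoint resolution occurs) rather than into an unrefined $\Curve^{1}$ as you do, but the substance of that case is identical.
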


\begin{proof}[Proof of Proposition \ref{prop:deltanozerodivisor}]
Let $\gamma \in \Curve$ be nonzero and $e \gamma = 0$.   First consider the case where the edge $e$ has two distinct vertices, $v$ and $w$. We may assume that in the vector of vertices $\bv = (v_{1}, v_{2}, \cdots, v_{n})$, $v = v_{1}$ and $w = v_{2}$. 

By Lemma~\ref{lem:split}, we may fix $j=0,1,2$ and assume that $\gamma =  \sum_{k \in I}f_{k}(v_{i}^{\pm})\alpha_{k}$, where $f_{k}(v_{i}^{\pm}) \in \CC[v_{i}^{\pm}]$ and $\alpha_{k} \in \RMC^{j}$.   In the case that $j = 1$, we may further assume that $\alpha_{k}$ belongs to $\RMC^{1}_{v}$ or $\RMC^{1}_{w}$, and without loss of generality, let us assume $\RMC^{1}_{v}$.  
 
Because of   Proposition~\ref{prop:generatorsofcurvealgebra}, we now rewrite $\gamma$ as a linear combination of vertex classes as
\[
	\gamma = \sum_{\bm \in \ZZ^{C}}\gamma_{\bm}\bv^{\bm}, 
\]
and all $\gamma_{\bm}$ are $\CC$-linear combinations of elements of one of $\RMC^{0}$, $\RMC^{1}_{v}$, or $\RMC^{2}$.

In the first case, all $\gamma_{\bm}$ are $\CC$-linear combinations of reduced multicurves in $\RMC^{0}$. Then for any resolution of $e\gamma_{\bm}$, there is no resolution at a vertex (the second relation in Definition \ref{def:curvealgebra}). Thus it does not produce any extra vertex class, so as a linear combination of the vertex classes, 
\[
	e\gamma = \sum_{\bm \in \ZZ^{C}}e\gamma_{\bm}\bv^{\bm}.
\]
Therefore $e\gamma = 0$ implies that $e\gamma_{\bm} = 0$ for all $\bm$. Then by the assumption on $e$, $\gamma_{\bm} = 0$ for all $\bm$. Therefore $\gamma = 0$. 

If $\gamma_{\bm}$ is a $\CC$-linear combination of elements elements in $\RMC^{1}_{v}$, then for every resolution of $e\gamma_{\bm}$, there is only one resolution at a vertex $v$. Thus $ve\gamma_{\bm}$ is a $\CC$-linear combination of reduced multicurves, and we have the unique decomposition 
\begin{equation}\label{eqn:formulaegamma}
	e\gamma = \sum_{\bm \in \ZZ^{C}}ve\gamma_{\bm}\bv^{\bm - \be_{1}}
\end{equation}
where $\be_{1}$ is the first standard coordinate vector. Now $e\gamma = 0$ implies $ve\gamma_{\bm} = 0$ for all $\bm$, since the $\bv^{\bm}$ are linearly independent in $\Curve$. Since $v$ is a unit in $\Curve$, it now follows that $e\gamma_{\bm} = 0$. Our assumption $e \beta \neq 0$ for any $\beta$ that is a non-zero $\CC$-linear combination of reduced multicurves means that $\gamma_{\bm} = 0$ for all $\bm$.  Therefore $\gamma = 0$. 

The cases of $\RMC^{2}$ are similar and we can obtain the same conclusion.  The only difference is that instead of \eqref{eqn:formulaegamma}, we have 
\begin{equation}\label{eqn:formulaegamma2}
	e\gamma = \sum_{\bm \in \ZZ^{C}}vwe\gamma_{\bm}\bv^{\bm - \be_{1} - \be_{2}}.
\end{equation}

When $e$ is an edge whose ends are both $v$, then we have a decomposition $\RMC = \RMC^{0}\sqcup \RMC^{2}$.  We also argue in the same way. The only difference here is that in $\RMC^{2}$ case, we have \eqref{eqn:formulaegamma} instead of \eqref{eqn:formulaegamma2} because there is only one endpoint resolution at $v$.
\end{proof}

Putting together Propositions \ref{prop:multiplyingeisinjective} and \ref{prop:deltanozerodivisor}, we immediately arrive at the statement of 
Theorem \ref{thm:locallyplanarzerodivisor}, which states that when $e$ is an edge of a locally planar triangulation of $\Sigma$, then $e$ is not a zero divisor in $\Curve$.  This also completes the proof in Section~\ref{sec:outline} of Theorem \ref{thm:mainthmintro}, which states that when $\Sigma$ is locally planar, the Poisson algebra homomorhpism  $\Phi: \Curve \to C^{\infty}( \Teich)$ is injective.


\section{The Roger-Yang skein algebra $\Arc$}\label{sec:arcalgebra}

Having finished the proofs of Theorems~\ref{thm:nonzerodivisorimpliesinjectivityintro} and  \ref{thm:mainthmintro} about the commutative curve algebra $\Curve$, we now turn to the quantum setting.  Let us now consider the skein algebra $\Arc$ as defined by Roger and Yang \cite{RogerYang14}.  

\subsection{Framed knots and arcs in a thickened punctured surface}
Previously, we considered only loops and arcs in the 2-dimensional punctured surface $\Sigma$.   We now go up a dimension, to the 3-dimensional product $\Sigma \times [0,1]$.  In particular we define framed knots, arcs, and generalized framed links in $\Sigma \times [0,1]$ as analogies of, respectively, the loops, arcs, and generalized multicurves in the 2-dimensional punctured surface $\Sigma$.   Recall that $P$ are the punctures of $\Sigma$.  

A \emph{framed knot in $\Sigma \times [0,1]$} is an embedding of an oriented annulus into $\overline \Sigma \times[0,1]$ that is disjoint from $P \times [0,1]$.  A \emph{framed arc in $\Sigma \times [0,1]$} is a map of a strip $[0,1] \times[0,1]$ into $\overline \Sigma \times [0,1]$ so that on the set $(0,1) \times [0,1]$ it is an embedding into $\overline \Sigma \times [0,1]$ that is  disjoint from $P \times [0,1]$, and on each of the sets $\{0\} \times [0,1]$ and $\{1 \} \times [0,1]$, it is an embedding into $P \times [0,1]$ that is increasing in the second coordinate. A \emph{generalized framed link in $\Sigma \times [0,1]$} is a disjoint union of finitely many framed knots and framed arcs.  Thus, although more than one component of a generalized framed link may end above a particular puncture  $p_i$, the components must do so at different heights above $p_i$.  
 
 We consider generalized framed links up to a suitable notion of regular isotopy which is described in detail in \cite{RogerYang14}.  In particular, regular isotopy of generalized framed links can be described using four moves on their diagrams (the usual  three Reidemeister moves on the interior and one more move for ends of arcs meeting at a vertex).  In this paper, we will assume that diagrams are obtained from representatives with vertical framing, so that the restriction of the embedding from the definition of a framed knot or arc is always increasing in the second coordinate.  Breaks in the diagrams are enough to show crossing information at double points in the interior or at a vertex, but further numbering according to height will be necessary when more than two ends of arcs meet at a vertex.  
We say that a generalized framed link in $\Sigma \times [0,1]$ is \emph{simple} or \emph{reduced} when its diagram is a reduced generalized multicurve in $\Sigma$.   In particular, the empty set $\emptyset$ is a reduced generalized framed link.

There is a natural \emph{stacking operation} for two generalized framed links $\alpha, \beta$  in $\Sigma \times [0,1]$.  In particular,  $\alpha$ stacked on top of $\beta$ is the union of the framed curve $\alpha' \subset \overline \Sigma \times [0, \frac12]$ (obtained by rescaling $\alpha$ in $ \overline \Sigma \times [0, 1]$ vertically by half) and of  the framed curve $\beta' \subset \overline \Sigma \times [\frac12, 1]$ (obtained by rescaling $\beta$ in $\overline \Sigma \times [0, 1]$ vertically by half).   We denote the framed link obtained from $\alpha$ stacked on top of $\beta$ as $\alpha \cdot \beta$.

\subsection{Roger-Yang skein algebra}\label{ssec:RYarcalgebra} 
Suppose that $h$ is some indeterminate.  Then  the ring of power series in $h$, equipped with a natural $h$-adic topology, will be denoted by $\CC[[h]]$. Furthermore, in this ring, we distinguish a certain power series $q= e^{h/4}\in \CC[[h]]$.  In addition, let there be an indeterminate $v_i$ associated to each puncture $p_i \in P$, such that a formal inverse $v_i^{-1}$ exists. Let $\CC[[h]][v_i^{\pm1}]$ denote the commutative $\CC[[h]]$-algebra generated by $\{v_{i}^{\pm 1}\}$.

\begin{definition}\label{def:RYalgebra}
Let $\Sigma$ be a surface with punctures. Let $h$ be some indeterminate, and associate a variable $v_i$ to every punctures $p_i$.  Then the \emph{Roger-Yang skein algebra $\Arc$} is the $\CC[[h]][v_i^{\pm1}]$-algebra freely generated by by the generalized framed links on $\Sigma$ modded out by the following relations:
\begin{align*}
&1)
\quad
\begin{minipage}{.5in}\includegraphics[width=\textwidth]{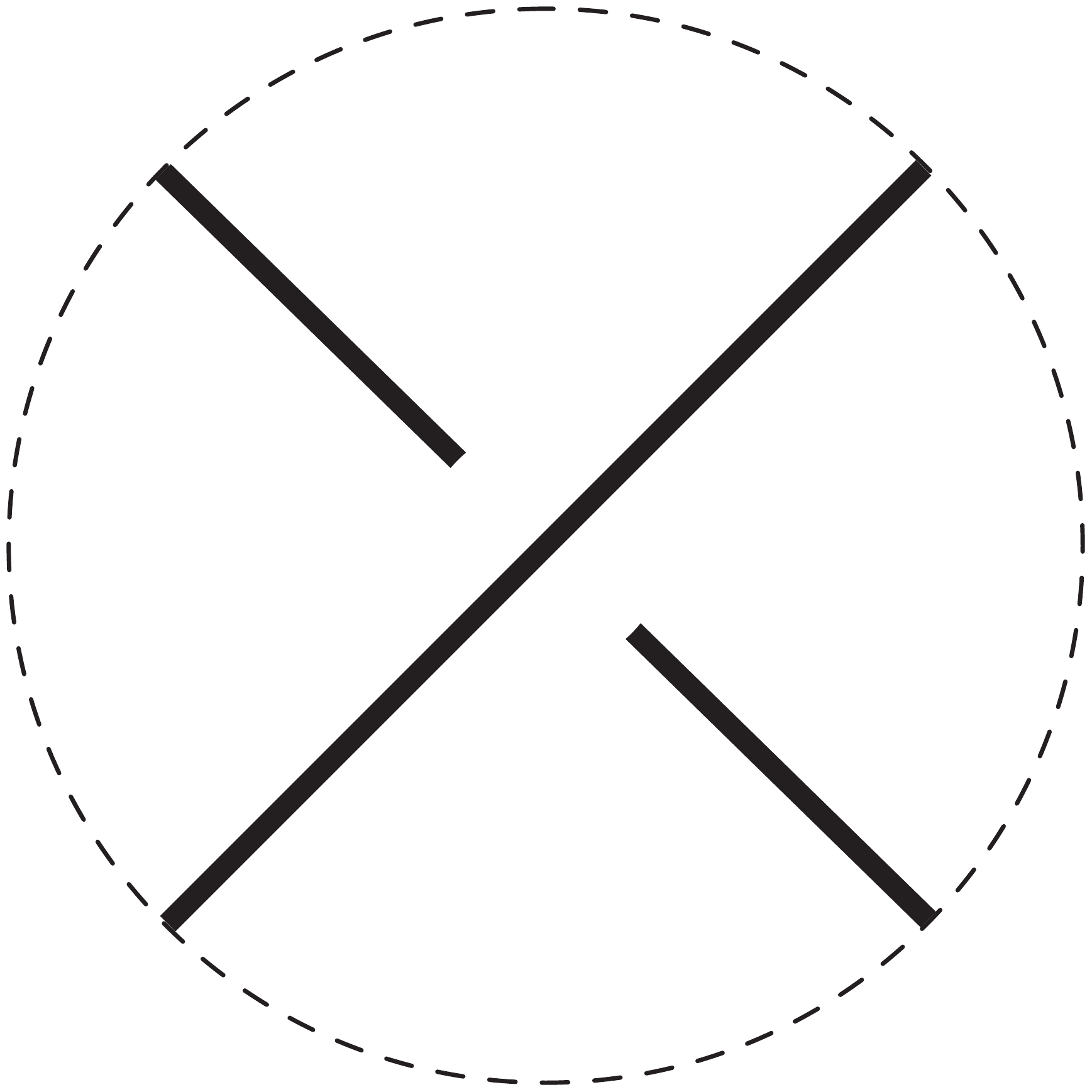}\end{minipage} 
-  \left( q\begin{minipage}{.5in}\includegraphics[width=\textwidth]{rel-skein2.pdf}\end{minipage} 
+q^{-1}\begin{minipage}{.5in}\includegraphics[width=\textwidth]{rel-skein3.pdf}\end{minipage}  \right)\\
&2)
\quad 
v_i \begin{minipage}{.5in}\includegraphics[width=\textwidth]{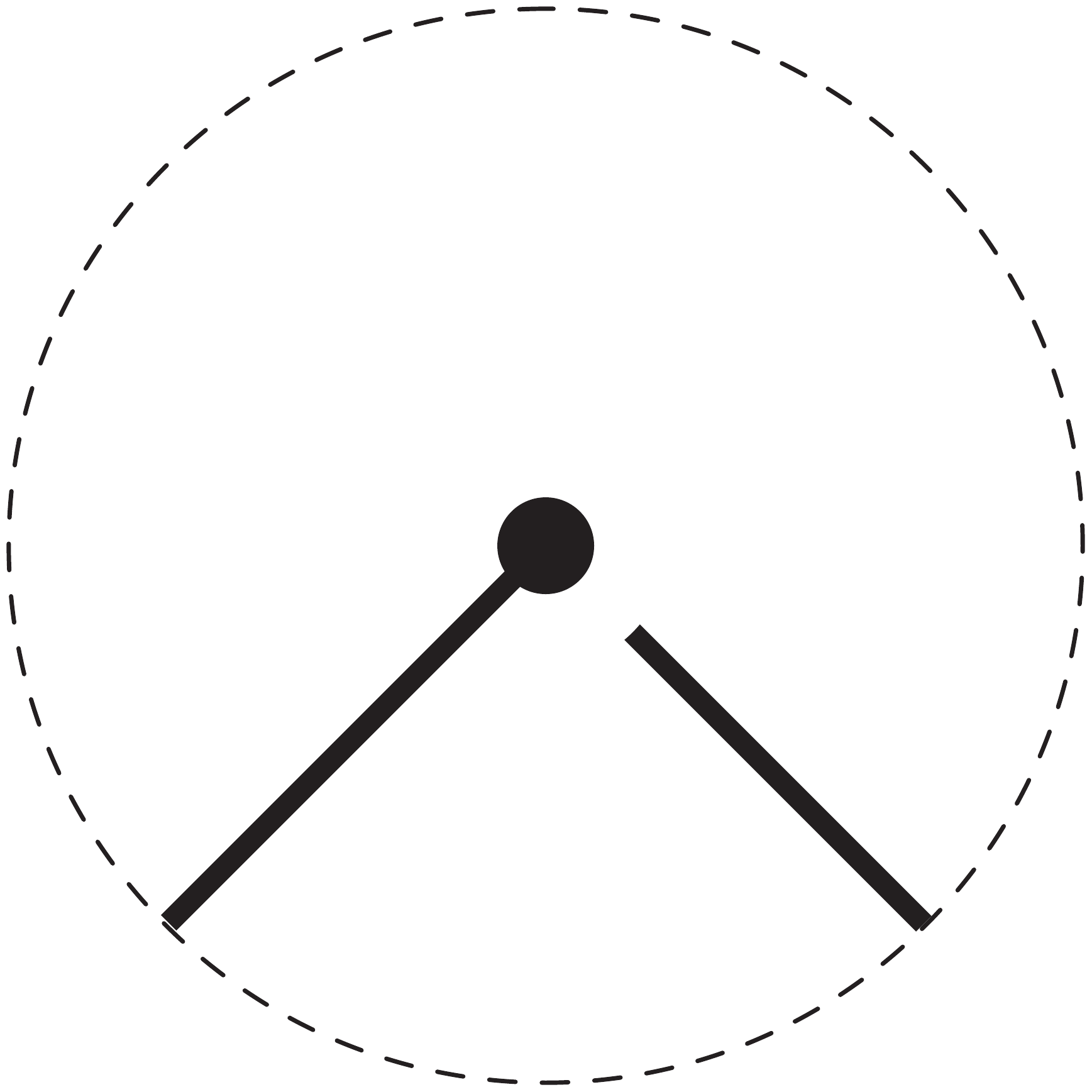}\end{minipage} 
- \left(  q^\frac{1}{2}\begin{minipage}{.5in}\includegraphics[width=\textwidth]{rel-punctureskein2.pdf}\end{minipage} 
+ q^{-\frac{1}{2}}\begin{minipage}{.5in}\includegraphics[width=\textwidth]{rel-punctureskein3.pdf}\end{minipage}  \right)\\
&3)
\quad 
\begin{minipage}{.5in}\includegraphics[width=\textwidth]{rel-framing.pdf} \end{minipage} 
- ( - q^2 - q^{-2} )\\
&4)
\quad 
\begin{minipage}{.5in}\includegraphics[width=\textwidth]{rel-puncture.pdf} \end{minipage} 
-( q + q^{-1} ),\\ 
\end{align*}
where we use $q= e^{h/4}$, and where the diagrams in the relations are assumed to be identical outside of the small balls depicted.  Multiplication of elements in $\Arc$ is the one induced by the stacking operation for generalized framed links. 
\end{definition}

It is a simple exercise to prove that $\Arc$ is well-defined.  Observe that in the absence of punctures on $\Sigma$, the Roger-Yang $\Arc$ and the Kauffman bracket skein algebra coincide. Hence, $\Arc$ can be regarded as an extension of the Kauffman bracket skein algebra.  

From comparing the definitions of the curve algebra $\Curve$ and the Roger-Yang skein algebra $\Arc$, we see that the Roger-Yang algebra is some non-commutative version of the curve algebra. Formally, we have the following theorem.

\begin{theorem}[\protect{\cite[Proposition 2.10]{RogerYang14}}]\label{thm:quantization}
Let $p: \Arc \to \Curve$ be the map which associates a generalized framed link in $\Sigma \times [0,1]$ with its projection to a generalized multicurve in $\Sigma$.  Then  $p$ induces an isomorphism between the $\CC$-algebras $\Arc/ (h \cdot \Arc)$ and $\Curve$.  

Hence,  $\Arc$ is a deformation quantization of $\Curve$.  
\end{theorem}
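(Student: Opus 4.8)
The plan is to reduce everything to the quantum analogue of the basis theorem Proposition~\ref{prop:generatorsofcurvealgebra}. Write $R = \CC[[h]][v_i^{\pm}]$, so that $R/(h) = \CC[v_i^{\pm}]$. \emph{Step 1 (module structure of $\Arc$).} First I would establish that $\Arc$ is a free $R$-module whose basis is the set of reduced (simple) generalized framed links, which is canonically in bijection with $\RMC$. The idea is the standard skein-theoretic reduction: using relations 1)--4) of Definition~\ref{def:RYalgebra} one rewrites an arbitrary diagram as an $R$-linear combination of reduced ones, successively eliminating interior crossings (relation 1), vertex crossings (relation 2), trivial loops (relation 3), and puncture loops (relation 4); the substance is that the output does not depend on the order in which the moves are applied. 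This is precisely \cite[Proposition 2.10]{RogerYang14}, established by a confluence (diamond-lemma type) argument on diagrams, and I would invoke it. Letting $W$ be the $\CC$-span of $\RMC$, this gives $\Arc \cong W \otimes_{\CC} R$ as $R$-modules; in particular $\Arc$ is $h$-torsion-free and flat over $\CC[[h]]$.

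\emph{Step 2 ($p$ is well defined and $\ker p = h\Arc$).} Reducing coefficients modulo $h$ one has $q = e^{h/4} \equiv 1$, hence $q^{\pm 1} \equiv q^{\pm 1/2} \equiv q^{\pm 2} \equiv 1$, and relations 1)--4) of Definition~\ref{def:RYalgebra} become relations 1)--4) of Definition~\ref{def:curvealgebra}; since moreover the stacking product projects to the union product, the assignment sending a generalized framed link to its underlying generalized multicurve and $h$ to $0$ descends to a $\CC$-algebra homomorphism $p : \Arc \to \Curve$, surjective because every generalized multicurve lifts to the framed link with vertical framing. Clearly $h\Arc \subseteq \ker p$. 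Conversely, writing $z = \sum_{\alpha \in \RMC} f_{\alpha}(h, v_i)\,\alpha$ in the basis of Step 1, one gets $p(z) = \sum_{\alpha} f_{\alpha}(0, v_i)\,\alpha$, which by Proposition~\ref{prop:generatorsofcurvealgebra} vanishes if and only if $f_{\alpha}(0, v_i) = 0$ for every $\alpha$, equivalently $h \mid f_{\alpha}$ for all $\alpha$, equivalently $z \in h\Arc$. Hence $p$ induces an isomorphism $\Arc/(h\Arc) \xrightarrow{\sim} \Curve$.

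\emph{Step 3 (the Poisson compatibility behind the ``hence'').} To conclude that $\Arc$ is a deformation quantization I would verify that the first-order commutator reproduces the generalized Goldman bracket. Since $\Arc$ is $h$-torsion-free and $\Arc/(h\Arc) = \Curve$ is commutative, $[\tilde a, \tilde b] \in h\Arc$ for any lifts $\tilde a, \tilde b$ of $a, b \in \Curve$, so $p\big(\tfrac{1}{h}[\tilde a, \tilde b]\big)$ is a well-defined bilinear bracket on $\Curve$ (independence of the lifts is the usual one-line check using commutativity of $\Arc/h\Arc$). It then suffices to identify this bracket with $\{\alpha, \beta\}$ for $\alpha, \beta \in \RMC$: lifting both to framed links with vertical framing, $\tilde\alpha\tilde\beta$ and $\tilde\beta\tilde\alpha$ differ only in the crossing data at the finitely many points of $\alpha \cap \beta$, and a first-order expansion at each such point — using $q - q^{-1} = \tfrac{h}{2} + O(h^3)$ at interior crossings (relation 1) and $q^{1/2} - q^{-1/2} = \tfrac{h}{4} + O(h^3)$ together with the factor $v_i^{-1}$ at vertex crossings (relation 2) — yields exactly the defining formula of $\{\alpha, \beta\}$ from Section~\ref{ssec:Poissonalgebra}.

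\emph{Main obstacle.} The one genuinely nontrivial ingredient is Step 1 — the well-definedness of the diagrammatic reduction, and in particular local confluence at a vertex where several arc ends meet and relations 2) and 4) interact. Once the $R$-module basis is available, Steps 2 and 3 are purely formal; in the present paper the statement is simply quoted from \cite{RogerYang14}.
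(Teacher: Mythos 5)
There is nothing to compare against line by line: the paper offers no proof of this statement, importing it wholesale from \cite{RogerYang14} (together with Theorem~\ref{thm:topologicallyfree} for the module structure), so your reconstruction is in effect a summary of Roger and Yang's own argument, and it is sound in outline. One imprecision is worth fixing. $\Arc$ is not a free $R$-module on the reduced framed links: it is only \emph{topologically} free, and by Theorem~\ref{thm:topologicallyfree} one has $\Arc \cong \Curve[[h]]$ as $\CC[[h]]$-modules, which allows $h$-adically convergent infinite combinations of reduced multicurves; the uncompleted isomorphism $\Arc \cong W \otimes_{\CC} R$ you assert in Step 1 is not the correct statement. This does not damage the rest: writing $z = \sum_{k \ge 0} \beta_k h^k$ with $\beta_k \in \Curve$, one still gets $p(z) = \beta_0$, hence $\ker p = h\Arc$ and $\Arc/(h\Arc) \cong \Curve$, and the $h$-torsion-freeness needed to define $p\bigl(\tfrac{1}{h}[\tilde a, \tilde b]\bigr)$ in Step 3 is exactly what topological freeness provides. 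Your first-order constants do match the generalized Goldman bracket of Section~\ref{ssec:Poissonalgebra}, since $q - q^{-1} = 2\sinh(h/4) = \tfrac{h}{2} + O(h^3)$ at interior crossings and relation 2) contributes $v_i^{-1}\bigl(q^{1/2} - q^{-1/2}\bigr) = v_i^{-1}\bigl(\tfrac{h}{4} + O(h^3)\bigr)$ at punctures. A small citation quibble: the confluence/basis input you invoke in Step 1 is Roger--Yang's Theorem 2.4 (quoted here as Theorem~\ref{thm:topologicallyfree}), while their Proposition 2.10 is the mod-$h$ identification itself, i.e.\ the statement being proved together with Proposition~\ref{prop:generatorsofcurvealgebra}; as stated, your Step 1 risks circularity in attribution, though not in mathematical content.
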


The above theorem generalizes the analogous statements about the Kauffman bracket skein algebra \cite{Turaev91,  HostePrz90, BullockFrohmanJKB99}.   For the definition and details about deformation quantizations, see \cite{Kontsevich03}.

Although one is commutative whereas the other is usually not, the underlying module structure of $\Arc$ is no more complicated than $\Curve$. We say that a $\CC[[h]]$-module $M$ is \emph{topologically free} if $M \cong V\otimes \CC[[h]]$ for some vector space $V$ in the category of $\CC[[h]]$-modules. 

\begin{theorem}[\protect{\cite[Theorem 2.4]{RogerYang14}}]\label{thm:topologicallyfree}
The algebra $\Arc$ is topologically free. Furthermore, $\Arc \cong \Curve[[h]]$ as $\CC[[h]]$-modules. 
\end{theorem}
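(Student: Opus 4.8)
The theorem reduces to a single combinatorial fact: that the Kauffman-state-sum resolution algorithm for $\Arc$ (described below) has a well-defined output. Granting this, the algorithm yields a $\CC[[h]][v_i^{\pm}]$-linear retraction of the $h$-adically completed free module on all generalized framed links onto the $h$-adic completion $\widehat{F}$ of the free $\CC[[h]][v_i^{\pm}]$-module $F$ on reduced generalized framed links, whose kernel is precisely the closed span of the defining relations of Definition~\ref{def:RYalgebra}; hence $\Arc \cong \widehat{F}$, the $h$-adic completion of a free $\CC[[h]][v_i^{\pm}]$-module. By definition this says $\Arc$ is topologically free, and since reducing $\widehat{F}$ modulo $h$ recovers the free $\CC[v_i^{\pm}]$-module on $\RMC$, which is $\Curve$ by Proposition~\ref{prop:generatorsofcurvealgebra}, we obtain $\widehat{F}\cong\Curve[[h]]$ as $\CC[[h]]$-modules (this also reproves Theorem~\ref{thm:quantization}). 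Alternatively, once one knows that $\Arc$ is $h$-adically complete, separated, and $h$-torsion-free, topological freeness is formal, and then $\Arc/(h\cdot\Arc)\cong\Curve$ together with Proposition~\ref{prop:generatorsofcurvealgebra} again gives $\Arc\cong\Curve[[h]]$.

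To run the algorithm: fix a diagram, order its crossings, resolve each interior crossing by relation~1 and each crossing at a vertex by relation~2, then delete trivial loop components using relation~3 and puncture-loop components using relation~4, and finally isotope the resulting crossingless diagram into its normal form in $\RMC$. Each step strictly decreases a lexicographically ordered complexity (number of crossings; then number of trivial and puncture loops; then number of turn-backs inside triangles), so the procedure terminates and writes every generalized framed link as a $\CC[[h]][v_i^{\pm}]$-combination of elements of $\RMC$; since $q=e^{h/4}$ and $q^{1/2}=e^{h/8}$ are units in $\CC[[h]]$, all coefficients lie in $\CC[[h]][v_i^{\pm}]$. The content is that the output does not depend on the choices made. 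By the diamond lemma this is equivalent to \emph{local} confluence, which must be checked at finitely many overlap configurations: the three Reidemeister-type moves, the move for several arc-ends meeting at a vertex, and the ways a trivial loop or a puncture loop can be slid past an adjacent crossing or past each other. Equivalently, one may carry out the same resolution over the generic ring $\ZZ[q^{\pm1/2}][v_i^{\pm}]$, deduce there that reduced generalized framed links are a free basis, and then base change along $q^{1/2}\mapsto e^{h/8}$ and $h$-adically complete --- base change and $h$-adic completion preserve freeness over $\CC[[h]]$.

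The main obstacle is the local-confluence check, and within it the configurations that have no counterpart in the classical Kauffman-bracket skein algebra (\cite{HostePrz90, Turaev91, BullockFrohmanJKB99}): those are precisely the ones created by the endpoint resolution of relation~2 and by the puncture-loop relation~4, since these are the relations that involve the puncture variables $v_i$. For example, one must verify that resolving the ends of several arcs meeting above a single puncture at distinct heights is order-independent, and that a puncture loop surrounding such a cluster of arc-ends can be removed consistently with either choice of resolving a neighboring crossing first; each such case needs its own short diagrammatic argument. Once those are disposed of, the rest --- termination, the Reidemeister overlaps, and the elementary module-theoretic bookkeeping identifying $\Arc$ with $\Curve[[h]]$ --- is routine.
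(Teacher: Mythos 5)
You should first note that the paper offers no proof of this statement to compare against: Theorem~\ref{thm:topologicallyfree} is imported wholesale from \cite[Theorem 2.4]{RogerYang14} (and is then used as an input in the proof of Theorem~\ref{thm:nozerodivisors}), so what you are really attempting is a re-proof of Roger and Yang's theorem. Judged on its own terms, your proposal has a genuine gap: the entire mathematical content of the theorem is the well-definedness of the resolution procedure, i.e.\ that reduced generalized framed links are linearly independent in $\Arc$, and this is precisely the part you do not carry out. Spanning is easy and is not the issue. You correctly isolate the dangerous overlap configurations --- order of endpoint resolutions of several arc ends stacked over one puncture, compatibility with the extra regular-isotopy move permuting arc ends at a vertex, removal of puncture loops encircling arc endpoints, and their interactions with relations (2) and (4) --- but then only assert that ``each such case needs its own short diagrammatic argument.'' Those verifications are the theorem; without them the proposal is a plan rather than a proof. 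The proposed fallback (complete $+$ separated $+$ $h$-torsion-free $\Rightarrow$ topologically free) is formally correct, but establishing separatedness and torsion-freeness of $\Arc$ is not easier than establishing the basis statement, so it does not reduce the work.

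A secondary, fixable inconsistency: you waver between the completed and uncompleted settings. Definition~\ref{def:RYalgebra} presents $\Arc$ as a quotient of the free $\CC[[h]][v_i^{\pm}]$-module on framed links; if no $h$-adic completion is built into that quotient, your base-change argument yields a free module on $\RMC$ over $\CC[[h]][v_i^{\pm}]$, i.e.\ $\Curve\otimes_{\CC}\CC[[h]]$, whereas your first formulation identifies $\Arc$ with the $h$-adically completed module $\widehat F$, which for infinite-dimensional $\Curve$ is strictly larger and is what the notation $\Curve[[h]]$ in the statement suggests. You need to fix one convention (the one actually used in \cite{RogerYang14}) and make the retraction, the kernel identification, and the base-change step consistent with it; as written, the two halves of your argument prove statements about two different modules. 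Also, ``completion preserves freeness'' is not literally correct --- completion yields topological freeness, which is what you want, so phrase it that way.
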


\begin{remark}
Variations in the definition of $\Arc$ exist in the literature. In particular, let $\cA^{A}(\Sigma)$ be the  $\ZZ[A][v_{i}^{\pm}]$-algebra generated by $\RMC$ on $\Sigma$ and with the same four relations as in Definition~\ref{def:RYalgebra}. Observe that $\cA^{A}(\Sigma)$ can be regarded as a coordinate restriction of $\Arc$, by mapping $A \to q = e^{h/4}$.  Thus statements about $\cA^{A}(\Sigma)$ apply also to $\Curve$ and $\Arc$.  In particular, $\Arc$ and $\Curve$ are also finitely generated, with an explicit generating set \cite{BKPW16JKTR},  and a presentation is known for certain small surfaces including the three-punctured sphere and the one-punctured torus \cite{BKPW16Involve}.   
\end{remark}

\subsection{Integrality of the Roger-Yang algebras}\label{sec:consequences}

As we mentioned in the introduction,  $\Arc$ seems a likely candidate to be a quantization of the decorated Teichm\"uller space. In the case of the Kauffman bracket skein algebra, its integrality was an important step towards showing that it is a quantization of the decorated Teichm\"uller space \cite{Bullock97, PrzytyckiSikora00, PrzytyckiSikora19}.   Analogously, we also have integrality for the Roger-Yang algebras. 

\begin{theorem} \label{thm:nozerodivisors}
Suppose that Conjecture \ref{conj:injectivity} is true for a punctured surface $\Sigma$. Then  $\Arc$, $\cA^{A}(\Sigma)$, and $\Curve$ are all domains. In particular, if $\Sigma$ is locally planar, then $\Arc$,  $\cA^{A}(\Sigma)$, and $\Curve$ are domains.
\end{theorem}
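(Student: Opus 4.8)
The plan is to get domain-ness of $\Curve$ as an essentially immediate consequence of the injectivity of $\Phi$, and then to transport it to the quantum algebras $\Arc$ and $\cA^{A}(\Sigma)$ by means of the module structure theorems recalled above. Assume Conjecture~\ref{conj:injectivity}, so that $\Phi\colon\Curve\to C^{\infty}(\Teich)$ is injective. By Lemma~\ref{lem:factoring}, $\Phi$ factors as $\Curve\to\CC[\lambda_{i}^{\pm}]\hookrightarrow C^{\infty}(\Teich)$ with the second map the tautological (injective) inclusion; since the composite is injective, the first map $\Curve\to\CC[\lambda_{i}^{\pm}]$ is an injective ring homomorphism. (Equivalently, in diagram~\eqref{eqn:diagram} one has $\Phi=\Psi\circ L$ after identifying $\CC[\lambda_{i}^{\pm}]$ with its image, and $\Psi$ is an isomorphism by Lemma~\ref{lem:localmapisom}, so $L\colon\Curve\to S^{-1}\Curve$ is injective and $S^{-1}\Curve\cong\CC[\lambda_{i}^{\pm}]$.) Thus $\Curve$ is isomorphic to a subring of the Laurent polynomial ring $\CC[\lambda_{i}^{\pm}]$, which is an integral domain; hence $\Curve$ is a domain.

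For $\Arc$ I would run an $h$-adic leading-term argument. By Theorem~\ref{thm:topologicallyfree}, $\Arc\cong\Curve[[h]]$ as $\CC[[h]]$-modules, so $h$ is a non-zero-divisor in $\Arc$ and $\bigcap_{n\ge 0}h^{n}\Arc=0$. By Theorem~\ref{thm:quantization}, the projection $p\colon\Arc\to\Arc/(h\Arc)\cong\Curve$ is a ring homomorphism. Given nonzero $x,y\in\Arc$, write $x=h^{a}x'$ and $y=h^{b}y'$ with $x',y'\notin h\Arc$ (possible since $\bigcap_{n}h^{n}\Arc=0$); then $p(x')\ne 0$ and $p(y')\ne 0$ in $\Curve$, so $p(x'y')=p(x')p(y')\ne 0$ because $\Curve$ is a domain, whence $x'y'\notin h\Arc$ and in particular $x'y'\ne 0$. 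Since $h$ is a non-zero-divisor, $xy=h^{a+b}x'y'\ne 0$. Therefore $\Arc$ is a domain.

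Finally, for $\cA^{A}(\Sigma)$ I would exhibit it as a subring of $\Arc$. The specialization $A\mapsto q=e^{h/4}$ defines a ring homomorphism $\ZZ[A][v_{i}^{\pm}]\to\CC[[h]][v_{i}^{\pm}]$ that is injective, since $e^{h/4}$ is transcendental over $\QQ$ in $\CC[[h]]$. Because $\cA^{A}(\Sigma)$ is free over $\ZZ[A][v_{i}^{\pm}]$ on the basis $\RMC$ (by the argument behind Theorem~\ref{thm:topologicallyfree}, which does not depend on the choice of base ring) and $\Arc$ is topologically free over $\CC[[h]][v_{i}^{\pm}]$ on the same basis, the induced $\RMC$-linear map $\cA^{A}(\Sigma)\to\Arc$ sends $\sum_{\alpha}f_{\alpha}\,\alpha$ to $\sum_{\alpha}\bar f_{\alpha}\,\alpha$ and is therefore injective. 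A subring of the domain $\Arc$ is a domain, so $\cA^{A}(\Sigma)$ is a domain. The ``in particular'' clause follows because, by Theorem~\ref{thm:locallyplanarzerodivisor}, Conjecture~\ref{conj:injectivity} holds whenever $\Sigma$ is locally planar.

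I do not expect a genuinely hard step here: once Theorems~\ref{thm:nonzerodivisorimpliesinjectivityintro} and~\ref{thm:mainthmintro} are available, the only place needing care is the transfer from $\Curve$ to the quantum algebras, which rests on the topological freeness of $\Arc$ (Theorem~\ref{thm:topologicallyfree}) --- used to guarantee both that $h$ is a non-zero-divisor and that $\bigcap_{n}h^{n}\Arc=0$, so that the $h$-adic leading term is well defined --- and on the corresponding freeness of $\cA^{A}(\Sigma)$ used to realize it inside $\Arc$.
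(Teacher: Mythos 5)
Your proposal is correct and follows essentially the same route as the paper: $\Curve$ embeds into the Laurent polynomial ring $\CC[\lambda_i^{\pm}]$ via the factored $\Phi$, the statement for $\Arc$ follows by an $h$-adic lowest-order-term argument using Theorems~\ref{thm:quantization} and~\ref{thm:topologicallyfree}, and $\cA^{A}(\Sigma)$ is handled as a subalgebra of $\Arc$ via $A\mapsto q=e^{h/4}$. The only difference is that you spell out details (non-zero-divisibility of $h$, $\bigcap_n h^n\Arc=0$, and the injectivity of $\cA^{A}(\Sigma)\to\Arc$) that the paper leaves implicit.
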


\begin{proof}
By the proof of Theorem \ref{thm:nonzerodivisorimpliesinjectivity}, $\Curve$ is a subalgebra of $\CC[\lambda_{i}^{\pm}]$. The latter is an integral domain, thus $\Curve$ is, too.

Recall from Theorem~\ref{thm:topologicallyfree} that $\Arc$ is topologically free. Thus as a $\CC[[h]]$-module, $\Arc \cong \Curve[[h]]$. Let $\alpha, \beta \in \Arc$ be two nonzero elements. Then $\alpha$ (resp. $\beta$) can be written as $\sum_{i \ge m}\alpha_{i}h^{i}$ (resp. $\sum_{i \ge n}\beta_{i}h^{i}$) with $\alpha_{i}, \beta_{i} \in \Curve$. Now
\[
	\alpha \beta = \alpha_{m}\beta_{n}h^{m+n} + O(h^{m+n+1}).
\]
Since the smallest degree term is nonzero by the classical case, $\alpha\beta \ne 0$ in $\Arc$. 

The algebra $\cA^{A}(\Sigma)$ is a subalgebra of $\Arc$ by sending $A \mapsto q = e^{h/4}$. 
\end{proof}

\bibliographystyle{alpha}

\end{document}